\newtheorem{theorem}{Theorem}[section]
\newtheorem{lemma}[theorem]{Lemma}
\newtheorem{proposition}[theorem]{Proposition}
\newtheorem{corollary}[theorem]{Corollary}
\newtheorem{notation}[theorem]{Notation}
\theoremstyle{remark}
\newtheorem{remark}[theorem]{Remark}
\numberwithin{equation}{section}
\newcommand{\NN}{{\mathbb{N}}}
\newcommand{\ZZ}{{\mathbb{Z}}}
\newcommand{\RR}{{\mathbb{R}}}
\newcommand{\PP}{{\mathbb{P}}}
\newcommand{\QQ}{{\mathbb{Q}}}
\renewcommand{\SS}{{\mathbb{S}}}
\renewcommand{\PP}{{\mathbb{P}}}
\DeclareMathOperator{\rk}{rk}
\DeclareMathOperator{\covol}{covol}
\DeclareMathOperator{\supp}{supp}
\newcommand{\diag}{\mathrm{diag}}
\newcommand{\GL}{\mathrm{GL}}
\newcommand{\SL}{\mathrm{SL}}
\newcommand{\UL}{\mathrm{UL}}
\newcommand{\SO}{{\mathrm{SO}}}
\newcommand{\SG}{\mathsf{G}}
\newcommand{\sg}{\mathsf{g}}
\newcommand{\sh}{\mathsf{h}}
\newcommand{\T}{{\mathsf{T}}}
\newcommand{\CJ}{\mathsf{J}}
\renewcommand{\t}{\mathsf{t}}
\renewcommand{\j}{\mathsf{j}}
\newcommand{\I}{\mathsf{I}}
\newcommand{\q}[2]{\mathsf q^{#1}_{#2}}
\newcommand{\ox}{\mathbf x}
\newcommand{\ow}{\mathbf w}
\newcommand{\ov}{\mathbf v}
\providecommand{\ve}{\mathbf{ e}}
\newcommand{\NT}{\mathbf N}
\newcommand{\VT}{\mathbf V}
\newcommand{\Siegel}[2]{\mathcal S_{#1}(#2)}
\newcommand{\id}{\mathrm{I}}
\newcommand{\MV}{\mathrm{M}}
\renewcommand{\varpi}{\pi}%{\clubsuit}
\newcommand{\pp}{\mathtt p}
\newcommand{\origin}{O}
\newcommand{\one}{{\mathsf 1}}
\def\moverlay{\mathpalette\mov@rlay}
\def\mov@rlay#1#2{\leavevmode\vtop{%
   \baselineskip\z@skip \lineskiplimit-\maxdimen
   \ialign{\hfil$\m@th#1##$\hfil\cr#2\crcr}}}
\newcommand{\charfusion}[3][\mathord]{
    #1{\ifx#1\mathop\vphantom{#2}\fi
        \mathpalette\mov@rlay{#2\cr#3}
      }
    \ifx#1\mathop\expandafter\displaylimits\fi}
\newcommand{\bigcupdot}{\charfusion[\mathop]{\bigsqcup}{\cdot}}
\definecolor{cmd}{rgb}{1.0, 0.35, 0.21}
\begin{document}
\title[Mean Value of $S$-arithmetic Siegel transform]{Rogers' mean value theorem for $S$-arithmetic Siegel transforms and applications to the geometry of numbers}
\author{Jiyoung Han}

\subjclass[2010]{11H60, 11P21, 37A45}

\maketitle
\begin{abstract}
We prove higher moment formulas for Siegel transforms defined over the space of unimodular $S$-lattices in $\QQ_S^d$, $d\ge 3$, where in the real case, the formulas are introduced by Rogers \cite{Rogers55}. 
As applications, we obtain the random statements of Gauss circle problem for any star-shaped sets in $\QQ_S^d$ centered at the origin and of the effective Oppenheim conjecture for $S$-arithmetic quadratic forms.
\end{abstract}

%\tableofcontents

%\subsection*{Acknowledgments} 

%%%%%%%%%%%%%%%%%%%%%%%%%%%%%%%%%%%%%%%%%%%%%%%%%%%%%%%%%%%%%%%%%%%%%%%%%%%%%%%%%%%%%%%%%%%%%%%%%%%%%%%%%%
\section{Introduction}
% We will define $\widetilde f$ and $\Siegel{1}{f}$ differently, excluding/including $f(\origin)$, respectively.

%%%%%%%%%%
Let $G=\SL_d(\RR)$ and $\Gamma=\SL_d(\ZZ)$. Consider the homogeneous space $G/\Gamma$, which is identified with the space of unimodular lattices $\Lambda$ in $\RR^d$. 
For any bounded and compactly supported function $f : \RR^d \rightarrow \RR_{\ge0}$, one can define \emph{the Siegel transform} $\widetilde f$ of $f$ by
\begin{equation}\label{eq Siegel}
\widetilde f (\Lambda) = \sum_{\ov \in \Lambda-\{\origin\}} f(\ov), \;\forall \Lambda \in G/\Gamma,
\end{equation}
where $\origin=(0,\ldots,0)$ is the origin in $\RR^d$.

%for any $\Lambda \in \SL_d(\RR)/\SL_d(\ZZ)$.
There is the famous integral formula by Siegel \cite{Sie98}; %between $f$ and $\widetilde f$
\[
\int_{G/\Gamma} \widetilde f(g\ZZ^d) dg = \int_{\RR^d} f(\ov) d\ov,
\]
where $dg$ is the normalized $G$-invariant measure of $G/\Gamma$ and $d\ov$ is the usual Lebesgue measure of $\RR^d$.

%%%%%%%%%%
In 1955, Rogers~\cite{Rogers55} provided the $k$-th moment formulas for the Siegel transform, where $2\le k\le d-1$. In particular, the second moment formula is as follow (\cite[Theorem 4]{Rogers55}. See also \cite[Lemma 4]{Rogers56-2}):
\[
\int_{\SL_d(\RR)/\SL_d(\ZZ)} {\widetilde f(g\ZZ^d)}^2 dg
=\left(\int_{\RR^d} f(\ov) d\ov \right)^2 
+\hspace{-0.2in}\sum_{\scriptsize \begin{array}{c}k\in \NN, q\in \ZZ-\{0\}\\ \gcd(k,q)=1\end{array}}
 \hspace{-0.09in}\int_{\RR^d} f(k\ov) f(q\ov) d\ov.
\]

Let us briefly explain the core property that Rogers used to compute moment formula as follows.
Consider the diagonal flow $$a_t=\diag(e^{-t/(d-1)}, \ldots, e^{-t/(d-1)}, e^t)$$ in $G$ and $$H^-=\{g\in G: a_t^{-1}ga_t\rightarrow \id_d \text{ as }t\rightarrow \infty\}.$$
For $\phi:G/\Gamma\rightarrow \RR_{\ge 0}$, define
\[
\MV(\phi)=\lim_{t\rightarrow \infty}
\int_{H^-/(H^-\cap \Gamma)} \phi(a_t h\Gamma) dh,
\]  
where $dh$ is the normalized Haar measure on $H^-/(H^-\cap \Gamma)$.
Let $L_g$ be the left multiplication by $g\in G$. If $\MV(\phi\circ L_g)=\MV(\phi)$ for almost all $g\in G$, then
\begin{equation}\label{eq 1: introduction}
\int_{G/\Gamma} \phi\: dg=M(\phi).
\end{equation}

He deduced \eqref{eq 1: introduction} using the property that
\begin{equation}\label{eq 2: introduction}
\int_{G/\Gamma} \phi(gg'\Gamma)dg
=\int_{G/\Gamma} \phi(g\Gamma) dg,\; \forall g'\in G
\end{equation}
(see \cite[Page 256]{Rogers55}) which is not true in general. It is unknown that the above equality \eqref{eq 2: introduction} is true even for $\phi=\widetilde f\:^k$, where $f$ is a non-negative, bounded and compactly supported function on $\RR^d$.

%%%%
The limit equation \eqref{eq 1: introduction} for bounded functions is followed from the special case of Theorem 1.4 in \cite{Shah1996} (see \cite{Gui2010} for the $S$-arithmetic space). However, it is difficult to extend \eqref{eq 1: introduction} to unbounded functions $\phi$ such as powers $\widetilde f\:^k$ of the Siegel transform.

%%%%
The first goal of this paper is to give an alternative proof of the $k$-th moment formulas for the Siegel transform in more general setting- for the $S$-arithmetic space $\QQ_S^d$. Note that the simplest $S$-arithmetic space is the real space $\RR^d$.

%%%%%%%%%%
Based on these integral formulas, it is possible to approach many number-theoretic problems related to lattices in $\RR^d$, 
using homogeneous dynamics. 
For instance, for a given nondegenerate isotropic irrational quadratic form $q$ on $\RR^d$, 
Dani and Margulis \cite{DM} and Eskin, Margulis and Mozes \cite{EMM} used the Siegel transform to obtain the asymptotic limit of the counting function
\[
\NT(q, a, b, T)=\#\left\{\ov \in \ZZ^d : a< q(\ov)< b,\; \|\ov\|< T \right\}.
\] 
%which is called a quantitative Oppenheim conjecture.

%%%%%%%%%%
Using the Rogers' second moment theorem, 
Athreya and Margulis \cite{AM18} showed that there is a positive $\delta>0$ such that
for almost all isotropic quadratic form $q$ of rank at least 3, 
the difference between $\NT(q, a, b, T)$ and its asymptotic limit is $o(T^{d-2-\delta})$. 
Kelmer and Yu \cite{KY} obtained similar results for specific homogeneous polynomials of arbitrary even degree and Bandi, Ghosh and the author \cite{BGH} generalized these results for pairs of a specific homogeneous polynomial of even degree and a linear map.
Ghosh, Kelmer and Yu \cite{GKY} extended the first and the second moment theorems for congruence subgroups and obtained the result about the error term between $\NT(q^{}_\xi, a, b, T)$ and its asymptotic limit for generic inhomogeneous quadratic forms $q^{}_\xi$, for any fixed shift $\xi\in\RR^d$.
See also \cite{Sar} and \cite{GGN} for systems of linear maps on a quadratic surface, and \cite{HLM} for the $S$-arithmetic set-up.

%%%%%%%%%%
There are also applications to problems related to Diophantine approximation.
%%%
Kleinbock and Margulis \cite{KM99} used the Siegel transform (defined over a subset of primitive vectors) to obtain the Borel-Cantelli family,
which is connected to the study of Diophantine approximation of systems of $m$ linear forms in $n$ variables.
%%% CLT diophantine 
Bj$\ddot{\text{o}}$rklund and Gorodnik \cite{BG17,BG18} showed central limit theorems for 
the lattice counting problem and the matrix version of Diophantine approximation, respectively.
See also \cite{DFV17} for the central limit theorem of Diophantine approximation for another generalization to the case of higher dimension.
%%%

%%%%%%%%%%
The second goal of this paper is that as applications, combining with the results of \cite{AM09} and \cite{HLM}, 
we show the random statement of effective version of Oppenheim conjecture for $S$-arithmetic quadratic forms and 
of the Gauss circle problem for arbitrary star-shaped sets centered at the origin, for the $S$-arithmetic set-up.
These results are generalizations of the works of Athreya and Margulis \cite{AM18} and the special case of the result of Schmidt \cite{Sch60}.

%%%%%%%%%%
\subsection*{Organization}
%%% organization
In Section 2, we state our main theorems and applications which are proved in Section 3 and Section 5.
In Section 3, to prove higher moment formulas, we introduce the $\alpha$-function defined on the space of unimodular lattices (see the definition \eqref{alpha function}) which is useful to prove the integrability of unbounded functions similar to the Siegel transform (see the definition \eqref{widehat function}).
In Section 4, we first give an example which explains why Lemma 2.1 in \cite{AM18} fails in the $S$-arithmetic space, when $S\neq \{\infty\}$. And then for applications in Section 5, we provide examples of families of bounded measurable sets in $\QQ_S^d$, which satisfy the statement of Lemma 2.1 in \cite{AM18}.

%%%%%%%%%%
\subsection*{Acknowledgments}
I would like to thank Seonhee Lim and Anish Ghosh for valuable advices and discussion.
I am also grateful to Jayadev Athreya, Keivan Mallahi-Karai, Seungki Kim, Dmitry Kleinbock and Mishel Skenderi.
This paper is supported by the Samsung Science and Technology Foundation under project No. SSTF-BA1601-03 and the National Research Foundation of Korea (NRF) grant funded by the Korea government under project No. 0409-20200150.

%%%%%%%%%%%%%%%%%%%%%
% Section %%%%%%%%%%%
%%%%%%%%%%%%%%%%%%%%%
\section{Statement of Results}\label{Statement of Results}

\subsection*{$S$-arithmetic space}
%%%%%%%%%%
Let $S$ be a union of $\{\infty\}$ and a finite set $S_f=\{p_1, \ldots, p_s\}$ of primes.
If we refer $p\in S$, it includes the case that $p=\infty$. 
Consider the completion field $\QQ_p$ of $\QQ$ with respect to the $p$-adic norm $|\cdot|_p$ and let $\QQ_\infty=\RR$. 
Define $\QQ_S=\prod_{p\in S} \QQ_p$ and 
\[\ZZ_S=\{(z,\ldots, z)\in \QQ_S : z\in \QQ\text{ and } |z|_v \le 1,\;\forall v\notin S\}.
\]
%If $p<\infty$, let $\ZZ_p=\{z \in \QQ_p : |z|_p \le 1\}$.
The set $\ZZ_S$ is called \emph{the ring of $S$-integers}.
Note that if $S=\{\infty\}$, then $\QQ_S=\RR$ and $\ZZ_S=\ZZ$.

Let us assign the measure $\mu=\prod_{p\in S} \mu_p$ on $\QQ_S$, where $\mu_p$ is the Haar measure on $\QQ_p$, $p\in S$.
If $p<\infty$, we normalize $\mu_p$ so that $\mu_p(\ZZ_p)=1$, where $\ZZ_p=\{x \in \QQ_p : |x|_p\le 1\}$. 
For simplicity, we denote the product measure $\mu^d$ of $\QQ_S^d$ by $\mu$ and we regard $\mu$ as the volume of $\QQ_p^d$. 

We will choose the supremum norm on $\RR^d$ instead of the usual Euclidean norm,
and define the norm $\|\cdot\|_S$ on $\QQ_S^d$ by
\[\|\ov\|_S=\max_{p\in S} \|\ov_p\|_p,
\] 
where $\ov=(\ov_p)_{p\in S}$.

A free $\ZZ_S$-module $\Lambda$ of rank $d$ in $\QQ_S^d$ is a uniform lattice subgroup of the additive group $\QQ_S^d$, 
that is, $\QQ_S^d/\Lambda$ is compact and hence has a finite $\QQ_S^d$-invariant measure.
We will call $\Lambda$ \emph{an $S$-lattice} of $\QQ_S^d$.

Define $\UL_d(\QQ_S)$ by
\[\UL_d(\QQ_S)=\left\{\sg=(g_p)_{p \in S} \in \GL_d(\QQ_S) : \begin{array}{c}\det g_\infty=1\;\text{and}\\ \left|\det g_p\right|_p=1, \;p\in S_f.\end{array} \right\}
\]
and $\UL_d(\ZZ_S)=\UL_d(\QQ_S)\cap \mathfrak M_{d}(\ZZ_S)$. 
Here, $\mathfrak M_{d}(\ZZ_S)=\mathfrak M_{d,d}(\ZZ_S)$, where $\mathfrak M_{m,n}(K)$ is the space of $m\times n$ matrices whose entries are in $K$.
As $\SL_d(\RR)/\SL_d(\ZZ)$ is identified with the space of unimodular lattices in $\RR^d$, 
$\UL_d(\QQ_S)/\UL_d(\ZZ_S)$ is identified with the space of unimodular $S$-lattices in $\QQ_S^d$ by the map
%Then it is easy to check that the space of unimodular lattices in $\QQ_S^d$ is identified with $\UL_d(\QQ_S)/\UL_d(\ZZ_S)$ by the map
\[\sg \in \UL_d(\QQ_S) \mapsto \sg\ZZ_S^d \subset \QQ_S^d.
\] 
Note that $\SL_d(\QQ_S)/\SL_d(\ZZ_S)$, where $\SL_d(\QQ_S)=\prod_{p\in S} \SL_d(\QQ_p)$, is properly embedded to $\UL_d(\QQ_S)/\UL_d(\ZZ_S)$.

\begin{remark}
Following \cite{HLM}, we will use sans serif typestyle for parameters of an $S$-arithmetic space.
In particular, we will denote by $\T=(T_\infty, T_{p_1}, \ldots, T_{p_s})$ a radius vector. It is an element of $\RR_{\ge0}\times \prod_{p\in S_f} p^{\ZZ}\subseteq (\RR_{\ge 0})^{s+1}$. 
We always regard that there is a relation between $\T$ and $\t=(t_p)_{p\in S}$ as follows: $T_\infty=e^{t_\infty}$ and $T_p=p^{t_p}$, $p\in S_f$.
We say that $\T=(T_p)_{p\in S}\succeq \T'=(T'_p)_{p\in S}$ if $T_p\ge T'_p$, $\forall p\in S$ and
$\T\rightarrow \infty$ if $T_p\rightarrow \infty$, $\forall p\in S$. 
Notions $\t\succeq \t'$ and $\t\rightarrow\infty$ are defined in the similar way.
\end{remark}

\begin{remark}\label{Remark 2.2}
Throughout the paper, if we denote by $\SG/\Gamma$, it is either $\SG=\UL_d(\QQ_S)$ and $\Gamma=\UL_d(\ZZ_S)$ or $\SG=\SL_d(\QQ_S)$ or $\Gamma=\SL_d(\ZZ_S)$.
Let us denote by $m=m_{\SG}$ either a Haar measure on a group $\SG$ or a $\SG$-invariant probability measure on $\SG/\Gamma$.
For simplicity, we will use $d\ov$ and $d\sg$ instead of $d\mu(\ov)$ and $dm(\sg)$, respectively.
\end{remark}

%%%%%%%%%%%%%%%%%%%%%%%%%%
%%%%%%%%%%%%%%%%%%%%%%%%%%
\subsection*{Main results}
For a bounded and compactly supported function $f$ on $\QQ_S^d$,
as in \eqref{eq Siegel}, 
define \emph{the Siegel transform} $\widetilde f$ on $\SG/\Gamma$ by
\[
\widetilde f (\Lambda) = \sum_{\ov \in \Lambda-\{\origin\}} f(\ov),
\]
where $\origin=(0,0,\ldots,0)$ is the origin of $\QQ_S^d$.

As in the real case, we have the following integral formula for the $\S$-arithmetic Siegel transform (\cite[Proposition 3.11]{HLM}).
\begin{proposition}\label{HLM Proposition 3.11} 
Let $d\ge 2$.
For a continuous bounded function $f:\QQ_S^d\rightarrow \RR_{\ge 0}$ of compact support, it follows that
%its Siegel transform $\widetilde{f}$ is $\mathcal L^r$ for $1\le r < d$ and
\begin{equation}\label{eq Siegel transform}
\int_{\SL_d(\QQ_S)/\SL_d(\ZZ_S)} \widetilde{f} d\sg = \int_{\QQ_S^d} f d\ov.
\end{equation}
In other words, the $\mathcal L^1$-norm of $\widetilde f$ defined on $\SL_d(\QQ_S)/\SL_d(\ZZ_S)$ is equal to the $\mathcal L^1$-norm of $f$ defined on $\QQ_S^d$.
\end{proposition}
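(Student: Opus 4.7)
The plan is to follow the classical Siegel--Weil approach, adapted to the $S$-adic setting. I would introduce the linear functional
\[
I(f)=\int_{\SL_d(\QQ_S)/\SL_d(\ZZ_S)}\widetilde f(\sg\ZZ_S^d)\,d\sg
\]
and aim to identify it with the Lebesgue integral of $f$. First, integrability: for $f\in C_c(\QQ_S^d)$ non-negative and bounded, a Mahler-type compactness argument for unimodular $S$-lattices (equivalently, the $\alpha$-function estimates developed in Section~3) bounds the number of lattice points of $\sg\ZZ_S^d$ in $\supp(f)$ well enough to give $\widetilde f\in\mathcal L^1$. Then Fubini makes $I$ a well-defined positive linear functional on $C_c(\QQ_S^d\setminus\{\origin\})$.

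Next I would exploit left-invariance of the Haar measure $d\sg$: for any $\sg_0\in\SL_d(\QQ_S)$, the substitution $\sg\mapsto\sg_0^{-1}\sg$ gives $I(f\circ\sg_0)=I(f)$, so $I$ represents an $\SL_d(\QQ_S)$-invariant Radon measure on $\QQ_S^d\setminus\{\origin\}$. A key observation is that every $\ov\in\ZZ_S^d\setminus\{\origin\}$, being a nonzero diagonally-embedded rational vector, has all $p$-components nonzero; hence so does $\sg\ov$, and the measure $I$ is supported on the open subset $U\subset\QQ_S^d\setminus\{\origin\}$ consisting of vectors whose $p$-th component is nonzero for every $p\in S$. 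The componentwise action of $\SL_d(\QQ_S)=\prod_{p\in S}\SL_d(\QQ_p)$ on $U$ is transitive (each factor acts transitively on $\QQ_p^d\setminus\{0\}$), the stabilizer of a fixed vector is unimodular, and $U$ has full Lebesgue measure in $\QQ_S^d$. By uniqueness of invariant measure on a transitive homogeneous space, $I=c\cdot d\ov$ for some $c\ge 0$.

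To pin down $c=1$ I would evaluate both sides on a test function $f_\varepsilon$ supported in a small neighborhood of the primitive vector $\mathbf e_1\in\ZZ_S^d$, chosen so small that for generic $\sg$ the sum $\widetilde f_\varepsilon(\sg\ZZ_S^d)$ reduces to the single term $f_\varepsilon(\sg\mathbf e_1)$. Unfolding the resulting integral against a fundamental domain for the stabilizer of $\mathbf e_1$ in $\SL_d(\ZZ_S)$, together with the normalizations $\mu_p(\ZZ_p)=1$ for $p\in S_f$ (so that $\ZZ_S^d$ has covolume one in $\QQ_S^d$), matches the two sides.

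The main obstacle is the uniqueness-of-invariant-measure step in the second paragraph: one has to verify carefully that $I$ assigns no mass to the lower-dimensional orbits outside $U$ and that the stabilizer of a generic vector (a semidirect product of $\SL_{d-1}(\QQ_S)$ with $\QQ_S^{d-1}$) is unimodular over every local factor. An alternative that avoids the uniqueness theorem altogether is the direct unfolding method: since $\ZZ_S$ is a principal ideal domain and $\SL_d(\ZZ_S)$ acts transitively on primitive vectors of $\ZZ_S^d$, decomposing $\widetilde f$ as a sum over primitive directions (scaled by contents $q\in\ZZ_S/\ZZ_S^{\times}$) and unfolding each summand against $\SL_d(\QQ_S)/\mathrm{Stab}(\mathbf e_1)\cong U$ yields the Lebesgue integral directly with the correct constant.
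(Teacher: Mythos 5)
The paper does not prove Proposition~\ref{HLM Proposition 3.11} directly; it cites \cite[Proposition 3.11]{HLM}. However, the paper's proof of the more general Theorem~\ref{middle main} (of which the Siegel formula is the case $k=r=1$, $q=1$, $D=\id_1$, via Theorem~\ref{MV Theorem 4}) uses precisely the strategy you describe: establish $\mathcal{L}^1$-integrability of the transform via the $\alpha_S$-function bounds (Propositions~\ref{bounded of alpha} and~\ref{Schmidt}), apply the Riesz--Markov--Kakutani representation theorem to get a $\SG$-invariant Radon measure on the full-measure open orbit (the complement of the ``degenerate'' set $E$, which for $r=1$ is exactly your set $U$), invoke transitivity and uniqueness of invariant measure, and then compute the proportionality constant by a limiting argument. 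So your main line of attack genuinely matches the paper's method.

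The difference is in how the constant $c$ is pinned down. You propose shrinking a test function toward a single primitive vector $\mathbf e_1$ and unfolding, or alternatively a full unfolding over primitive directions and contents. The paper instead takes the opposite limit: it evaluates the functional on normalized indicator functions of \emph{growing} sets $B_t$ (balls minus a neighborhood of $E$), and uses the ``pixel lemma'' (Lemma~\ref{pixel lemma}) to show that the normalized count $\#(\sg\Psi \cap B_t)/\mu(B_t)$ converges to the reciprocal covolume, uniformly on compacta. The growing-ball argument is self-contained and avoids the covolume calculation that your unfolding alternative implicitly requires (namely, $\mathrm{vol}(H/\Gamma_{\mathbf e_1})$ where $H=\mathrm{Stab}(\mathbf e_1)$, together with the identification of the invariant measure on $\SG/H \cong U$ with Lebesgue measure, and the resummation over contents in $\NN_S$ producing $\zeta_S(d)$). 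Your shrinking-support version is also workable, but as stated it is underspecified: both $\int\widetilde{f_\varepsilon}$ and $\int f_\varepsilon$ vanish in the limit, so you would need to control the ratio, which again reduces to the same unfolding/volume computation. None of this is a gap in principle, but it is the step where the details would need to be filled in; the paper's growing-ball device is the cleaner route and is the one you should compare against when writing this up.
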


We want to extend this integral formula to the case of the $\mathcal L^k$-norm of $\widetilde f$, where $1\le k\le d-1$ and $\SG/\Gamma=\UL_d(\QQ_S)/\UL_d(\ZZ_S)$ or $\SL_d(\QQ_S)/\SL_d(\ZZ_S)$, for the function $f$ as in Proposition~\ref{HLM Proposition 3.11}.

\vspace{0.1in}
Fix a positive integer $k\le d$. % original condition : $k\le n$, $n \le d-1$.
Let us define the transform $\Siegel{k}{\cdot}$ 
from the space of bounded functions $f$ on $(\QQ_S^d)^k$ of compact support 
to the space of functions on $\SG/\Gamma$ by
%For a bounded function $f:(\QQ_S^d)^k\rightarrow \RR_{\ge0}$ of compact support,  
%define \emph{a generalized Siegel transform} $\Siegel{k}{f}$ of $f$ by
\begin{equation}\label{Siegel k}
\Siegel{k}{f}(\Lambda)=\sum_{\ov_1, \ldots, \ov_k\in \Lambda} f(\ov_1, \ldots, \ov_k),\;\forall \Lambda \in \SG/\Gamma.
\end{equation}

Notice that for $f:\QQ_S^d\rightarrow \RR_{\ge0}$, the summation $\Siegel{k}{f^{(k)}}$, where $f^{(k)}(\ov_1, \ldots, \ov_k):=f(\ov_1)\cdots f(\ov_k)$, is taken over the set $\Lambda^k$,
whereas ${\widetilde f\:}^k$ is taken over $(\Lambda-\{\origin\})^k$. As you can see in Section~\ref{Proof of main theorem}, one can easily obtain the formula of $\int_{\SG/\Gamma} {\widetilde f\:}^k d\sg$ from the proof of Theorem~\ref{MV Theorem 4} (see Remark~\ref{sum over nonzero vectors}). 

\vspace{0.1in}
%%%%%%%%%%
For the computation of the mean value of $\Siegel{k}{f}$ over $\SG/\Gamma$,
we need some notations.
%%%%%%%%%%
We say that a matrix $D\in \mathfrak M_{r,k}(\ZZ_S)$ is \emph{reduced} if there is an integer $t\in \NN$ such that $tD\in \mathfrak M_{r,k}(\ZZ)$ and $\gcd\{(tD)_{ij}:1\le i\le r, 1\le j\le k\}=1$.

\begin{notation} \label{notations}
\begin{enumerate}
\item For a set $S=\{\infty, p_1, \ldots, p_s\}$, denote %$\NN_S$ the set of positive integers which are coprime to each $p_i$, $i=1,\ldots,s$:
\[\begin{split}
\NN_S&=\{q \in \NN : \gcd(q, p_1\cdots p_s)=1\}\;\text{and}\\
\PP_S&=\{p_1^{m_1}\cdots p_s^{m_s} : m_1, \ldots, m_s \in \ZZ_{\ge 0}\}
\end{split}\]
so that $\NN=\NN_S\cdot\PP_S$.
If $S=\{\infty\}$ and $\QQ_S=\RR$, we set 
\[\NN_S=\NN \quad\text{and}\quad\PP_S=\{1\}.\] 

\item 
For $q\in \NN_S$ and $r\in\{1,\ldots,k\}$, define $\mathfrak D_{r,q}$ by the set of reduced matrices $D$ in $\mathfrak M_{r,k}(\ZZ_S)$ for which 
$\exists 1\le j_1<j_2<\cdots<j_r\le k$ such that
\begin{enumerate}[(i)]
\item $\left([D]^{j_1}, [D]^{j_2}, \ldots, [D]^{j_r}\right)=q\id_r;$
\item $D_{ij}=0 \text{ for } 1 \le i \le r \text{ and } 1 \le j<j_i,$
\end{enumerate}
where $[D]^j$ is the $j$-th column of $D$.

\item For a positive $q$ and an $r\times k$ matrix $D$, let $N(D,q)$ be the number of vectors $\ox \in \{0,1,\ldots, q-1\}^r$ for which
\[\frac 1 q \ox D \in \ZZ_S^k.
\]
\end{enumerate}
\end{notation}

%%%%%%%%%%%%%%%%%%%%%%%%%%%%%%%%%%%
\begin{theorem}\label{MV Theorem 4}
Let $d\ge 2$ and $1\le k\le d-1$.
Let $f : (\QQ_S^d)^k \rightarrow \RR_{\ge 0}$ be a continuous bounded function of compact support and $\Siegel{k}{f}$ be defined as in \eqref{Siegel k}. Then $\int_{\SG/\Gamma} \Siegel{k}{f} d\sg <\infty$ and
\begin{equation*}\begin{split}%\label{eq MV Theorem 4 1}
\int_{\SG/\Gamma} \Siegel{k}{f} d\sg&= f(\origin, \ldots, \origin)+\int_{(\QQ_S^d)^k} f(\ov_1, \ldots, \ov_k) d\ov_1\cdots d\ov_k\\
&\hspace{-0.2in}+\sum_{r=1}^{k-1} \sum_{q\in \NN_S} \sum_{D\in \mathfrak D_{r,q}} \frac {N(D,q)^d} {q^{dr}}  \int_{(\QQ_S^d)^r} 
f\Big(\frac 1 q\left(\ov_1, \ldots, \ov_r\right)D\Big)d\ov_1 \cdots d\ov_r.
%&+\sum_{(\nu;\mu)}\sum_{q=1}^\infty \sum_{D} ftn(D,q) \int_{(\QQ_S^d)^m} f(\ov_1, \ldots, \ov_r)
\end{split}\end{equation*}
%Moreover, if $k\le d-1$, the above integral is finite.
\end{theorem}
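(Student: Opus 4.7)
The plan is to decompose $\Siegel{k}{f}(\Lambda)$ by the $\ZZ_S$-rank of the $k$-tuple inside $\Lambda$, parametrize each stratum by an auxiliary $r$-tuple together with a canonical reduced datum $(q,D)$ in the Hermite-type form $\mathfrak D_{r,q}$, and compute the integral on each stratum separately. The unbounded nature of $\Siegel{k}{f}$ is handled by the $\alpha$-function machinery of Section~3.

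Write $\Lambda^k=\bigsqcup_{r=0}^{k}T_r(\Lambda)$, where $T_r(\Lambda)$ collects the $k$-tuples whose $\ZZ_S$-span has rank exactly $r$. The $r=0$ stratum is the single tuple $(\origin,\ldots,\origin)$ and contributes $f(\origin,\ldots,\origin)$, and the $r=k$ stratum matches the $(q,D)=(1,\id_k)$ case of the general parametrization below (for which $N(\id_k,1)=1$) and yields $\int_{(\QQ_S^d)^k}f\,d\ov_1\cdots d\ov_k$. For intermediate rank $1\le r\le k-1$, every tuple in $T_r(\Lambda)$ admits a unique presentation
\[
(\ov_1,\ldots,\ov_k)=\tfrac{1}{q}(\ow_1,\ldots,\ow_r)\,D,\qquad q\in\NN_S,\ D\in\mathfrak D_{r,q},
\]
with $(\ow_1,\ldots,\ow_r)\in\Lambda^r$ auxiliary. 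The reducedness and pivot-column conditions on $D$ in Notation~\ref{notations} are a Hermite-type normalization that removes the basis ambiguity in the $\QQ_S$-span, making $(q,D)$ uniquely determined by $(\ov_1,\ldots,\ov_k)$. For fixed $(q,D)$, the auxiliary tuples $(\ow_i)\in\Lambda^r$ satisfying $\tfrac{1}{q}(\ow_i)D\in\Lambda^k$ form the preimage of the kernel of right-multiplication by $D$ on $(\Lambda/q\Lambda)^r\to(\Lambda/q\Lambda)^k$, and this kernel has cardinality $N(D,q)^d$ because $\Lambda/q\Lambda\cong(\ZZ_S/q\ZZ_S)^d$ splits coordinatewise into $d$ copies of the one-coordinate count $N(D,q)$ of Notation~\ref{notations}.

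The rank-$r$ contribution to the integral is then assembled as a sum over the $N(D,q)^d$ admissible residues $\mathbf r\in(\Lambda/q\Lambda)^r$ of terms of the form $\int_{\SG/\Gamma}\sum_{(\ow'_i)\in\Lambda^r}f\bigl((\ow'_i)D+\tfrac{1}{q}(r_i)D\bigr)\,d\sg$ (after writing $\ow_i=q\ow_i'+r_i$). Each such inner lattice sum is evaluated via an $r$-fold application/diagonal extension of Proposition~\ref{HLM Proposition 3.11}; translation invariance of the Lebesgue measure on $(\QQ_S^d)^r$ absorbs the shift $\tfrac{1}{q}(r_i)D$, and the change of variable $\ov_i=q\ow_i'$ introduces the Jacobian $q^{-dr}$, collectively producing the weight $\tfrac{N(D,q)^d}{q^{dr}}$ in front of $\int f\bigl(\tfrac{1}{q}(\ov_i)D\bigr)\,d\ov$. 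Convergence of the triple sum over $(r,q,D)$ and the finiteness $\int\Siegel{k}{f}\,d\sg<\infty$ are obtained by bounding $\Siegel{k}{f}$ pointwise by a multiple of a power of the $\alpha$-function of Section~3 and invoking $\alpha\in\mathcal L^p(\SG/\Gamma)$ for $p<d$, which applies since $k\le d-1$; Fubini then legitimizes the swap of the sum with the integral.

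The main obstacle is twofold. First, rigorously verifying that $\mathfrak D_{r,q}$ furnishes unique representatives for rank-$r$ tuples and that the associated multiplicity is exactly $N(D,q)^d$ is delicate in the $S$-arithmetic setting, where the decomposition $\NN=\NN_S\cdot\PP_S$ of Notation~\ref{notations} is forced by the finite primes of $S$ becoming units of $\ZZ_S$ and where the Hermite normalization must be performed uniformly over all places. Second, extending Proposition~\ref{HLM Proposition 3.11} to the $r$-fold, jointly-constrained lattice sums while controlling the unbounded integrand is precisely where the $\alpha$-function of Section~3 is indispensable, and where (as the introduction explains) Rogers' original argument based on left-translation invariance of the Haar integral breaks down in the $S$-arithmetic setting.
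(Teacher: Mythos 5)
Your high-level architecture is the same as the paper's: stratify $\Lambda^k$ by the $\ZZ_S$-rank $r$, parametrize the rank-$r$ stratum by $(q,D)\in\NN_S\times\mathfrak D_{r,q}$ plus an auxiliary $r$-tuple $(\ow_1,\ldots,\ow_r)$ (this is exactly the partition \eqref{integral: partition}), and use Propositions~\ref{bounded of alpha} and \ref{Schmidt} to bound $\Siegel{k}{f}$ by a power of $\alpha^{}_S$ and establish integrability. Your identification of $N(D,q)^d$ as the number of admissible residues in $(\ZZ_S^d/q\ZZ_S^d)^r$ is also correct and matches Lemma~\ref{pixel lemma}(a).

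The gap is in the sentence beginning \emph{``Each such inner lattice sum is evaluated via an $r$-fold application/diagonal extension of Proposition~\ref{HLM Proposition 3.11}.''} There is no such routine extension. For $r\ge 2$, the sum $\sum_{(\ow_1,\ldots,\ow_r)\in\sg\Phi(q,D)} F(\ow_1,\ldots,\ow_r)$ is over $r$-tuples from a lattice $\sg\Psi(q,D)\subset(\QQ_S^d)^r$ on which $\SG$ acts \emph{diagonally}, not as a copy of $\SL_{dr}$; Proposition~\ref{HLM Proposition 3.11} concerns a single vector in $\QQ_S^d$ and does not iterate to give this. Moreover, after your residue split $\ou_i=q\ou_i'+r_i$, the resulting shift is $\frac{1}{q}(\sg r_i)D$, which \emph{depends on $\sg$}, so you cannot absorb it by translation invariance before you have established the relevant integral identity. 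Proving that the map $F\mapsto\int_{\SG/\Gamma}\widehat F\,d\sg$ is a constant multiple of Haar measure on $(\QQ_S^d)^r$ is precisely the content of Theorem~\ref{middle main}; it requires the Riesz--Markov representation argument combined with the transitivity of $\SG$ on $(\QQ_S^d)^r$ off the degenerate set $E$ (Step~1 of the paper's proof), and the normalization constant $N(D,q)^d/q^{dr}$ is then extracted by an equidistribution/lattice-point-counting argument (Lemma~\ref{pixel lemma}(b) and Step~2). Your proposal asserts this entire core as an immediate corollary of the $r=1$ case.

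A secondary omission: the auxiliary $r$-tuples in the stratification are required to have rank exactly $r$ (the sets $\Phi(q,D)$), and the full-rank-at-every-place condition must be controlled since the relevant function $F(\ov_1,\ldots,\ov_r)=f\big(\tfrac1q(\ov_1,\ldots,\ov_r)D\big)$ is not compactly supported on $(\QQ_S^d)^r-E$. The paper handles this delicately in Step~3 (approximating $F$ by $F_t'\in C_c((\QQ_S^d)^r-E)$ and shrinking neighborhoods of $E$), whereas your proposal does not mention the degenerate set $E$ at all. Finally, your residue-splitting plus Jacobian computation of the constant, while arithmetically consistent with the covolume $q^{dr}/N(D,q)^d$ of $\Psi(q,D)$, is a different route from the paper's covolume/pixel-lemma argument; it would be fine as a way to identify the constant once the diagonal Siegel identity is established, but it does not replace the need to establish it.
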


\begin{remark}
In the case that $S=\{\infty\}$ and $\QQ_S=\RR$, the formula in Theorem~\ref{MV Theorem 4} is the same formula in Theorem 4 in \cite{Rogers55} 
since we can compute $N(D,q)=e_1\cdots e_r$, where
 $e_i=\gcd(\varepsilon_i,q)$ ($1\le i \le r$) when $\varepsilon_1, \ldots, \varepsilon_r$ are the elementary divisors of the matrix $D$.
\end{remark}

\begin{remark}\label{sum over nonzero vectors}
Let $\mathfrak D'_{r,q}$ be the subset of $\mathfrak D_{r,q}$ in Notation~\ref{notations} consisting of matrices D for which all columns of $D$ are nontrivial. Then for any $d\ge2$ and $1\le k \le d-1$, for any continuous bounded function $f:\QQ_S^d\rightarrow \RR_{\ge 0}$ of compact support, it follows that $\int_{\SG/\Gamma} {\widetilde f\:}^2 d\sg<\infty$ and 
\begin{equation*}\begin{split}%\label{eq MV Theorem 4 1}
\int_{\SG/\Gamma} {\widetilde f\:}^k d\sg&=\int_{(\QQ_S^d)^k} f(\ov_1, \ldots, \ov_k) d\ov_1\cdots d\ov_k\\
&\hspace{-0.2in}+\sum_{r=1}^{k-1} \sum_{q\in \NN_S} \sum_{D\in \mathfrak D'_{r,q}} \frac {N(D,q)^d} {q^{dr}}  \int_{(\QQ_S^d)^r} 
f\Big(\frac 1 q\left(\ov_1, \ldots, \ov_r\right)D\Big)d\ov_1 \cdots d\ov_r.
%&+\sum_{(\nu;\mu)}\sum_{q=1}^\infty \sum_{D} ftn(D,q) \int_{(\QQ_S^d)^m} f(\ov_1, \ldots, \ov_r)
\end{split}\end{equation*}
\end{remark}

As a corollary, we obtain the second moment theorem for Siegel transforms.

%%%%%%%%%%%%%%%%%%%%%%%%%%%%%%%%%%%%%%%%%%
\begin{corollary}\label{Corollary L2 norm}
Let $d\ge 3$.
For $f:\QQ_S^d\rightarrow \RR_{\ge 0}$ be a continuous bounded function of compact support, it holds that
\begin{equation*}\begin{split}
\int_{\SG/\Gamma} 
{\widetilde f\;}^2 d\sg
=\left(\int_{\QQ_S^d} f d\ov\right)^2
%\left(\int_{\SG/\Gamma} \Siegel d\sg-f(0)\right)^2\\
+\sum_{q\in \NN_S} \sum_{\pp \in \PP_S} \hspace{-0.12in}\sum_{\scriptsize \begin{array}{c}w\in \ZZ-\{0\}\\ \gcd(q\pp,w)=1\end{array}}
\int_{\QQ_S^d} f\left(q\ov\right)f\left(\frac w \pp \ov\right) d\ov.
\end{split}\end{equation*} 
\end{corollary}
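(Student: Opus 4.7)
The plan is to specialize Remark~\ref{sum over nonzero vectors} to $k=2$ and then rewrite the resulting double sum in the concrete parametrization appearing in the statement. Because $k-1=1$, the sum over $r$ collapses to $r=1$ alone; once one separates off the ``main term'' $(\int_{\QQ_S^d}f\,d\ov)^2$, the entire task reduces to rewriting
\[
\sum_{q\in\NN_S}\sum_{D\in\mathfrak D'_{1,q}}\frac{N(D,q)^d}{q^d}\int_{\QQ_S^d} f(\ov_1)\,f\bigl(\tfrac{D_{12}}{q}\ov_1\bigr)\,d\ov_1
\]
in terms of triples $(q,\pp,w)$.

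The first step is to unwind the definition of $\mathfrak D'_{1,q}$. A reduced matrix $D=(D_{11},D_{12})\in\mathfrak M_{1,2}(\ZZ_S)$ lies in $\mathfrak D'_{1,q}$ exactly when both entries are nonzero and there is a distinguished index $j_1\in\{1,2\}$ verifying conditions (i)-(ii) of Notation~\ref{notations}. Since $j_1=2$ would force $D_{11}=0$, which is excluded by the ``nontrivial columns'' hypothesis defining $\mathfrak D'$, only $j_1=1$ occurs, so $D=(q,D_{12})$ with $D_{12}\in\ZZ_S\setminus\{0\}$. Writing $D_{12}=w/\pp$ in lowest terms with $\pp\in\PP_S$, $w\in\ZZ\setminus\{0\}$, $\gcd(w,\pp)=1$, the reduced condition (take $t=\pp$) translates into $\gcd(q\pp,w)=1$, which identifies $\mathfrak D'_{1,q}$ bijectively with $\{(\pp,w)\in\PP_S\times(\ZZ\setminus\{0\}):\gcd(q\pp,w)=1\}$.

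Next I would compute $N(D,q)$ for such a $D$. For $x\in\{0,\ldots,q-1\}$, the requirement $\tfrac{1}{q}(xD)=(x,\tfrac{xw}{q\pp})\in\ZZ_S^2$ is automatic in the first coordinate; for the second, since $\pp\in\PP_S$ is a unit at every place outside $S_f$ and $q\in\NN_S$ is coprime to $p_1\cdots p_s$, membership in $\ZZ_S$ is equivalent to $q\mid xw$ in $\ZZ$, and then $\gcd(q,w)=1$ forces $q\mid x$, hence $x=0$. Thus $N(D,q)=1$ and the prefactor simplifies to $1/q^d$.

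Finally, the change of variable $\ov_1=q\ov$ has Jacobian $|q|_\infty^d\prod_{p\in S_f}|q|_p^d=q^d$, since $q\in\NN_S$ is a $p$-adic unit for every $p\in S_f$. It converts the integral into $q^d\int_{\QQ_S^d}f(q\ov)\,f(\tfrac{w}{\pp}\ov)\,d\ov$, which exactly cancels the $1/q^d$ prefactor and produces the summand in the statement. There is no genuine analytic difficulty here (convergence and the $\mathcal L^2$-bound are provided by Remark~\ref{sum over nonzero vectors} itself, which explains the hypothesis $d\ge 3$); the only real subtlety is the $j_1$ case analysis ensuring that the range of $w$ is genuinely all of $\ZZ\setminus\{0\}$ and the uniqueness of the lowest-terms representation $w/\pp$, both of which are elementary.
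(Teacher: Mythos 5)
Your proposal is correct and follows essentially the same route as the paper's own proof, which applies Remark~\ref{sum over nonzero vectors} with $k=2$, identifies the reduced matrices in $\mathfrak D'_{1,q}$ with pairs $(q,w/\pp)$ subject to $\gcd(q\pp,w)=1$, and then reparametrizes the sum. You additionally spell out two details that the paper's very terse argument leaves implicit, namely that $N(D,q)=1$ for each such $D$ and that the change of variable $\ov_1=q\ov$ has Jacobian $q^d$ (using $|q|_p=1$ for $p\in S_f$), which is exactly what cancels the $1/q^d$ prefactor.
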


%%%%%%%%%%%%%%%%%%%%%%%%%%
%%%%%%%%%%%%%%%%%%%%%%%%%%
\subsection*{Applications}
For the rest of this section, let us assume that the set $S_f$ contains only finitely many odd primes.
We first introduce two versions of the effective Oppenheim conjecture for $S$-arithmetic isotropic quadratic forms. 
\emph{A quadratic form} $\q{}{}$ on $\QQ_S^d$ is a collection of quadratic forms $q_p$, which is defined on $\QQ_p^d$ for each $p \in S$. We say that $\q{}{}=\{q_p\}_{p\in S}$ is \emph{nondegenerate} or \emph{isotropic} if each $q_p$, $p\in S$, is nondegenerate or isotropic, respectively.

%%%%%%%%%%%%%%%%%%%%%%%%%%%%%%%%%
\begin{theorem}\label{AM Thm 1.1} 
Let $d\ge 3$. Fix $\xi \in \QQ_S$.
For every $\delta>0$, for a.e. nondegenerate isotropic quadratic form $\q{}{}$,
there are constants $c_{\q{}{}}$, $\varepsilon_0=\varepsilon_0(\q{}{}, \xi)>0$ such that for any positive $\varepsilon<\varepsilon_0$, there is a nonzero $\ov \in \ZZ_S^d$ for which
\[ \|\ov\|_S \le c_{\q{}{}} \varepsilon^{-(\frac 1 {d-2} +\delta)}\quad \text{and}\quad |\q{}{}(\ov)-\xi|_S < \varepsilon.
\]
\end{theorem}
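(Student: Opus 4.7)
The plan is to follow the strategy of Athreya--Margulis~\cite{AM18}, replacing the real Rogers second moment formula by the $S$-arithmetic Corollary~\ref{Corollary L2 norm} and the volume/shape lemma (\cite[Lemma 2.1]{AM18}) by its $S$-arithmetic analogue developed in Section~4 of this paper. First, I would pass from the space of forms to the space of lattices: fix a reference nondegenerate isotropic form $q_0$ on $\QQ_S^d$ and write a generic form in its $\SL_d(\QQ_S)$-orbit as $\q{}{}=q_0\circ \sg^{-1}$ for $\sg\in\SG/\Gamma=\SL_d(\QQ_S)/\SL_d(\ZZ_S)$. The existence of $\ov\in\ZZ_S^d$ with $\|\ov\|_S\le T$ and $|\q{}{}(\ov)-\xi|_S<\varepsilon$ is then equivalent to the existence of $\ow\in\sg\ZZ_S^d$ lying in the ``thin shell''
\[
A_{\varepsilon,T}=\bigl\{\ow\in\QQ_S^d:\,\|\ow\|_S\le T,\;|q_0(\ow)-\xi|_S<\varepsilon\bigr\}.
\]
Thus the theorem is equivalent to the statement that for a.e.\ $\sg\in\SG/\Gamma$, $\widetilde{\chi_{A_{\varepsilon,T}}}(\sg\ZZ_S^d)>0$ once $\varepsilon$ is sufficiently small and $T=c_{\q{}{}}\varepsilon^{-1/(d-2)-\delta}$.

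Next I would carry out the standard volume/variance estimates place by place. Using the product structure $A_{\varepsilon,T}=\prod_{p\in S}A_{\varepsilon,T}^{(p)}$ and the standard co-area/change-of-variable computation on each $\QQ_p^d$, one obtains $\mu(A_{\varepsilon,T})\asymp \varepsilon T^{d-2}$ (with an $S$-dependent constant coming from the non-archimedean factors), provided $T$ is large and $\varepsilon$ small. By Proposition~\ref{HLM Proposition 3.11},
\[
\int_{\SG/\Gamma}\widetilde{\chi_{A_{\varepsilon,T}}}\,d\sg=\mu(A_{\varepsilon,T}).
\]
The key input is then the second moment estimate
\[
\int_{\SG/\Gamma}\widetilde{\chi_{A_{\varepsilon,T}}}^{\,2}\,d\sg\le C\bigl(\mu(A_{\varepsilon,T})\bigr)^{2},\qquad \mu(A_{\varepsilon,T})\ge 1,
\]
which I would deduce from Corollary~\ref{Corollary L2 norm} by splitting the double sum according to the places of the scaling pairs $(q,w/\pp)$. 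Because $A_{\varepsilon,T}$ is symmetric and ``essentially star-shaped'' in each place, any two vectors $q\ov,\frac{w}{\pp}\ov$ both lying in $A_{\varepsilon,T}$ force the smaller of the two dilated copies to be contained in the shell as well, yielding uniform control on each cross term; this is exactly the content of the $S$-arithmetic substitute for \cite[Lemma 2.1]{AM18} announced in Section~4.

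Given these estimates, Chebyshev's inequality yields
\[
m_{\SG}\bigl\{\sg\in\SG/\Gamma:\widetilde{\chi_{A_{\varepsilon,T}}}(\sg)=0\bigr\}\le \frac{C}{\mu(A_{\varepsilon,T})}\ll \frac{1}{\varepsilon T^{d-2}}.
\]
Choosing a geometric sequence $\varepsilon_n=\theta^{n}$ (with any $\theta\in(0,1)$) and $T_n=\varepsilon_n^{-1/(d-2)-\delta}$, one gets $\mu(A_{\varepsilon_n,T_n})\asymp \varepsilon_n^{-\delta(d-2)}$, so the measures of the bad sets are summable in $n$. A Borel--Cantelli argument then shows that for a.e.\ $\sg\in\SG/\Gamma$, $\widetilde{\chi_{A_{\varepsilon_n,T_n}}}(\sg\ZZ_S^d)\ge 1$ for all $n$ beyond some $n_0(\sg)$. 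Monotonicity (interpolating between $\varepsilon_n$ and $\varepsilon_{n+1}$) upgrades this to all sufficiently small $\varepsilon$, absorbing the loss $\theta$ into the constant $c_{\q{}{}}$; this gives the desired $\varepsilon^{-1/(d-2)-\delta}$ bound on $\|\ov\|_S$.

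The main obstacle will be Step~2, namely the second moment bound for the $S$-arithmetic thin shells $A_{\varepsilon,T}$. Unlike the real case, the cross terms in Corollary~\ref{Corollary L2 norm} involve pairs $(q,w/\pp)\in\NN_S\times (\ZZ[\PP_S^{-1}]-\{0\})$, and for $p$-adic places the sets $A_{\varepsilon,T}^{(p)}$ behave very differently from Euclidean shells under dilation (the non-archimedean ball is not strictly contracted by a non-unit scalar). Carefully identifying which dilates $\frac{1}{q}(q\ov,\tfrac{w}{\pp}\ov)$ actually contribute and then summing the geometric series over $q\in\NN_S$ and $\pp\in\PP_S$ is the technical heart; this is precisely why Section~4 of the paper is devoted to producing families of measurable sets in $\QQ_S^d$ that satisfy the requisite $S$-arithmetic replacement for \cite[Lemma 2.1]{AM18}. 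Once that lemma is in place and applied to the specific shells $A_{\varepsilon,T}$, the remainder of the argument is a straightforward Chebyshev--Borel--Cantelli routine.
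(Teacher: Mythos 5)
Your overall strategy mirrors the paper's: pass from forms to lattices via $\q{}{}=q_0\circ\sg^{-1}$, construct shell-type target sets, control the second moment of the Siegel transform of their indicator, and close with a Chebyshev/Cauchy--Schwarz step followed by Borel--Cantelli and a monotonicity interpolation. The one substantive flaw is in the moment bound you write down. The estimate
\[
\int_{\SG/\Gamma}\widetilde{\chi_{A}}^{\,2}\,d\sg\le C\,\mu(A)^{2},\qquad \mu(A)\ge 1,
\]
does \emph{not} imply $m\{\widetilde\chi_A=0\}\le C/\mu(A)$. From only $\int\widetilde\chi_A^2\le C\,\mu(A)^2$, Cauchy--Schwarz (equivalently Paley--Zygmund) gives merely
\[
m\{\widetilde\chi_A=0\}\le 1-\frac{(\int\widetilde\chi_A\,d\sg)^2}{\int\widetilde\chi_A^2\,d\sg}\le 1-\frac{1}{C},
\]
a constant bound that is useless for Borel--Cantelli. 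What is actually needed, and what Proposition~\ref{Prop class F}(b) supplies for the product family $\mathfrak F_{prod}$ (which contains your shells $A_{\varepsilon,T}$), is the sharper variance bound
\[
\int_{\SG/\Gamma}\widetilde{\chi_{A}}^{\,2}\,d\sg\le \mu(A)^{2}+C\,\mu(A),
\]
uniform over the family; inserting this into the Cauchy--Schwarz argument of Theorem~\ref{LLUF Theorem 2.2} gives $m\{\widetilde\chi_A=0\}\le C/\mu(A)$, which is the decay you invoke. You do identify Section~4 as the source of the needed input, so the plan is structurally sound, but as written the step from the stated bound to the measure estimate is a non sequitur and must be replaced by the $\mu(A)^2+C\mu(A)$ form.

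A smaller structural difference worth noting: the paper indexes its shells by a vector $\j\in\NN^{s+1}$ with independent thresholds $e^{-j_\infty}$, $p^{-j_p}$ at each place and sums the bad-set measures over all $\j$, while you use a one-parameter geometric sequence $\varepsilon_n$ with a single $\|\cdot\|_S$-threshold $T_n$ and then interpolate. Both work in principle since $\|\cdot\|_S=\max_p\|\cdot\|_p$, but at the finite places $|\cdot|_p$ only takes values in $p^{\ZZ}$, so a generic scalar $\varepsilon_n$ leaves the $p$-adic shell fixed until you cross a power of $p$; the multi-index $\j$ parametrization avoids this bookkeeping and makes the volume lower bound via Proposition~\ref{HLM Theorem 1.2} cleaner, since that estimate is proved place by place.
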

%%%%%%%%%%%%%%%%%%%%%%%%%%%%%%%%%

%%%%%%%%%%
For $d=3$ and $S=\{\infty\}$, Ghosh and Kelmer \cite{GK} showed the similar result with a different setting:
for positive $\eta <1$, consider a sequence $\{(N(k), \delta(k))\}_{k\in \NN}$ of pairs of positive numbers such that $\max(\frac {N(k)}{k^\eta \delta(k)^2}, \frac {N(k)^{3/2}}{k^{\eta}\delta(k)})$ goes to zero as $k$ goes to infinity. 
They showed that for almost every isotropic quadratic form $q$ of rank 3, 
\[
\max_{\xi \le N(k)} 
\min_{\scriptsize \begin{array}{c}
\ov \in \ZZ^d\\
\|\ov\|\le ck
\end{array}} |q(\ov)-\xi|\le \delta(k)
\]
for any sufficiently large $k$. 

\vspace{0.1in}

%%%%%%%%%%
For a quadratic form $\q{}{}$ on $\QQ^d_S$, a bounded subset $\I\subset \QQ_S$ and a radius parameter $\T=(T_p)_{p\in S}$, let
\[\begin{split}
\NT(\q{}{}, \I, \T)&=\#\left\{\ov \in \ZZ_S^d : \q{}{}(\ov)\in \I\text{ and }
%\begin{array}{c}\|\ov\|_\infty< T_\infty;\\ \|\ov\|_{p_i}< T_i,\; i=1,\ldots, s\end{array}
\|\ov\|_p<T_p,\; p\in S\right\}\text{ and}\\
\VT(\q{}{}, \I, \T)&=\mu\left(\left\{\ov \in \QQ_S^d : \q{}{}(\ov)\in \I\text{ and }
%\begin{array}{c}\|\ov\|_\infty< T_\infty;\\\|\ov\|_{p_i}< T_i,\; i=1,\ldots, s\end{array}
\|\ov\|_p<T_p,\; p\in S\right\}\right).
\end{split}\]
%where $\|\ov\|_p=\|v_p\|_p$, $\ov=(v_p)_{p\in S}$.

\begin{theorem}\label{AM Thm 1.2} 
Let $d\ge 3$. There exists $\delta_0=\delta_0(d)>0$ such that for any $\delta\in (0,\delta_0)$ and for any $S$-interval $\I$, it satisfies that 
%\[N(\q, \I, \T)=c_{\q} \mu(\I) \|\T\|^{d-2} + o\left(\|\T\|^{(d-2)-\delta}\right).
%\]
\[\NT(\q{}{}, \I, \T)=\VT(\q{}{}, \I, \T) + o_{\q{}{}}\left(|\T|^{(d-2)-\delta}\right)
\]
for almost every isotropic quadratic form $\q{}{}$ of rank $d$. Here, $|\T|=\prod_{p\in S} T_p$.

%For every $\delta>0$, for almost every indefinite quadratic form $\q$ of rank at least $3$,
%for any $S$-interval $\I$,
%\[N(\q, \I, \T)=c_{\q} \mu(\I) \|\T\|^{d-2} + o(\|\T\|^{(d-2)/2+\delta}).
%\]
\end{theorem}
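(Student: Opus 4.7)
The plan is to adapt the second-moment strategy of Athreya--Margulis \cite{AM18} to the $S$-arithmetic setting, using Corollary~\ref{Corollary L2 norm} in place of the classical Rogers second-moment formula, and the families of bounded sets constructed in Section~4 in place of the (now failing) direct analogue of \cite[Lemma 2.1]{AM18}. First, I parametrize nondegenerate isotropic quadratic forms of rank $d$ by a measurable family coming from the action of $\SL_d(\QQ_S)$ on a fixed reference isotropic form $\q{0}{}$; under this parametrization the count $\NT(\q{}{}, \I, \T)$ is realized as a Siegel transform $\widetilde{f_{\I,\T}}(\sg\ZZ_S^d)$ of a suitable indicator-type function $f_{\I,\T}$ on $\QQ_S^d$, so that ``almost every isotropic quadratic form'' becomes ``almost every $\sg\Gamma \in \SG/\Gamma$''.

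Next, Proposition~\ref{HLM Proposition 3.11} yields $\int_{\SG/\Gamma} \widetilde{f_{\I,\T}}\, d\sg = \VT(\q{}{}, \I, \T)$, identifying $\VT$ as the expected value of $\NT$. Applying Corollary~\ref{Corollary L2 norm} to $f_{\I,\T}$, the second moment decomposes as $\VT^2$ plus a sum of arithmetic ``diagonal'' terms. Estimating these diagonal terms against the volume growth of $A_{\I,\T} := f_{\I,\T}^{-1}(1)$, and using the standard fact that $\VT(\q{}{}, \I, \T) \asymp |\I| \cdot |\T|^{d-2}$ for an isotropic form of rank $d\ge 3$ (cf.\ the computations in \cite{EMM} and \cite{HLM}), one obtains the variance bound
\[
\int_{\SG/\Gamma} \bigl|\widetilde{f_{\I,\T}} - \VT\bigr|^2\, d\sg \;\ll_{\q{0}{}}\; |\T|^{d-2}.
\]
By Chebyshev's inequality, for any $\eta > 0$,
\[
m\!\left(\bigl\{\sg\Gamma : |\NT - \VT| > |\T|^{(d-2)/2 + \eta}\bigr\}\right) \;\ll\; |\T|^{-2\eta}.
\]

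Choose a discretized sequence of radii $\T_n$ with $|\T_n| \asymp \rho^n$ for some $\rho > 1$, adapted to the value group $\RR_{\ge 0} \times \prod_{p \in S_f} p^{\ZZ}$ in which $\T$ lives. Then the probabilities above are summable, and a Borel--Cantelli argument gives, for a.e.\ $\q{}{}$, the bound $|\NT(\q{}{},\I,\T_n) - \VT(\q{}{},\I,\T_n)| = O(|\T_n|^{(d-2)/2 + \eta})$ for all sufficiently large $n$. To pass from this discrete sequence to arbitrary $\T$, I sandwich the set $A_{\I,\T}$ between $A_{\I,\T_n}$ and $A_{\I,\T_{n+1}}$ for the unique $n$ with $\T_n \preceq \T \preceq \T_{n+1}$, using monotonicity of both $\NT$ and $\VT$ in $\T$, and control the discrepancy $\VT(\T_{n+1})-\VT(\T_n)$ by the quantitative families from Section~4. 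For $\eta$ small and any $\delta > 0$ sufficiently small, the total error is absorbed into $o(|\T|^{(d-2)-\delta})$, as required.

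The main obstacle is this last sandwiching step. Since the direct analogue of \cite[Lemma 2.1]{AM18} fails in the $S$-arithmetic setting when $S \neq \{\infty\}$ (as will be illustrated in Section~4), one cannot freely compare the sets defined by $\|\ov\|_p < T_p$ across slightly perturbed radii in the smooth way available over $\RR$; instead one must work strictly within the specific families of bounded measurable sets whose covering and sandwiching properties are established in Section~4, and choose the discretization $\T_n$ respecting the value groups $p^{\ZZ}$ at each finite place. Verifying that these families are rich enough to encode the sets $A_{\I,\T}$ that arise in the Oppenheim counting problem, while still being compatible with the second-moment estimate, is the key technical content; once this compatibility is in hand, the overall scheme above produces the claimed exponent $\delta_0(d) > 0$.
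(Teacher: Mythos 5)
There is a genuine gap, and it sits where you think the argument is straightforward rather than where you flag the difficulty. You write that under the parametrization $\q{}{}=\q{0}{}\circ\sg^{-1}$ the count $\NT(\q{}{},\I,\T)$ \emph{is} a Siegel transform $\widetilde{f_{\I,\T}}(\sg\ZZ_S^d)$ for a fixed indicator-type $f_{\I,\T}$. This is false. One has
\[
\NT(\q{}{},\I,\T)=\#\bigl\{\ow\in\sg\ZZ_S^d : \q{0}{}(\ow)\in\I,\ \|\sg^{-1}\ow\|_p<T_p\ \forall p\bigr\},
\]
and the norm condition is applied to $\sg^{-1}\ow$, not to $\ow$. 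The quadratic form transforms under $\sg$, but the ball $B(\origin,\T)$ does not, so the function being summed over lattice points of $\sg\ZZ_S^d$ itself depends on $\sg$ and you cannot apply Proposition~\ref{HLM Proposition 3.11} or Corollary~\ref{Corollary L2 norm} directly. The paper resolves this by a covering argument over the parameter space: one covers a compact $\mathcal K\subset\mathcal F$ by $O(\varepsilon_1^{-(d+2)(d-1)/2})$ neighborhoods $\mathcal B(\sh,\varepsilon_1)$, and on each such neighborhood one sandwiches $A_{\q{\sg\sh}{0},\I,\T}$ between $A_{\q{\sh}{0},\I,\T_1}$ and $A_{\q{\sh}{0},\I,\T_2}$ with $\T_i=(T_\infty(1\mp\varepsilon_1),\T^f)$, paying a volume discrepancy $\sim\varepsilon_1|\T|^{d-2}$ controlled via Lemma~\ref{KY Lemma 2.3}. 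The number of covering balls then enters the Chebyshev bound, and the ultimate value of $\delta_0(d)$ comes precisely from balancing this $\varepsilon_1^{-(d+2)(d-1)/2}$ factor against the other parameters in \eqref{eq no 6}--\eqref{eq no 8}. Your sketch omits this covering entirely, and so cannot produce the explicit exponent $\frac{d-2}{2}\cdot\frac{d^2+d-1}{d^2+d+4}$ (or indeed any exponent), because the crucial loss is not accounted for.

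A secondary issue: you identify the ``main obstacle'' as sandwiching $A_{\I,\T}$ between discretized radii $\T_n$, $\T_{n+1}$ with $|\T_n|\asymp\rho^n$, appealing to the failure of \cite[Lemma 2.1]{AM18} in the $S$-arithmetic case. But this is not the hard step, and a coarse geometric discretization of $|\T|$ would lose too much: the paper discretizes $T_\infty$ within each unit interval $[j_\infty,j_\infty+1)$ at a much finer scale $\varepsilon_2=j_\infty^{-\beta_\infty}|\CJ^f|^{-\beta_f}$ (the ``Transfer w.r.t.\ $\T$'' step), since the $p$-adic radii are already intrinsically discrete and only $T_\infty$ needs subdivision. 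The failure of \cite[Lemma 2.1]{AM18} is dealt with by restricting to the explicit family $\mathfrak F_{prod}$, into which $A_{\q{}{},\I,\T}$ falls, and this works without issue; Proposition~\ref{Prop class F}(b)(c) supplies exactly what is needed. So the part of the argument you believe to be the bottleneck works, and the part you believe to be routine (converting ``a.e.\ form'' to ``a.e.\ $\sg\Gamma$'' with a genuine Siegel transform) is where the additional covering step is unavoidable.
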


\vspace{0.1in}
%%%%%%%%%%%%%%%%%%%%%%%%%%%%%%%%%
For an arbitrary $S$-lattice $\Lambda$ and a bounded set $A$ in $\QQ_S^d$, 
let $\NT(\Lambda, A)$ be the cardinality of the intersection $\Lambda \cap A$.

The Gauss circle problem is to find the minimal exponent $d-2+\lambda_d$ for which $\NT(\ZZ^d, tA) - \mu(tA)=O(t^{d-2+\lambda_d})$,
where $A$ is a compact convex subset of $\RR^d$ containing the origin and having some boundary condition.
%%%
If $A$ is a sphere, it is known that $\lambda_d=0$ for $d\ge 4$ and it is conjectured to be $\lambda_d=0$ for $d=2$ and $3$ 
(\cite{Kra88, IKKN06}).  
%%%
For the case of any convex set, Guo \cite{Guo12} provides the smallest $\lambda_d$ as far as known: $\lambda_d=(d^2+3d+8)/(d^3+d^2+5d+4)$ for $d\ge 4$
and $73/158$ for $d=3$.

%%%
We will say that $A\subseteq \QQ_S^d$ is \emph{a star-shaped set centered at the origin} if
\[
A=\left\{\ov=(\ov_p)_{p\in S} \in \QQ_S^d : \begin{array}{c}
\|\ov_\infty\|_\infty \le \rho_\infty(\ov_\infty/\|\ov_\infty\|_\infty);\\
\|\ov_p\|_p \le \rho_p(\|\ov_p\|_p \:\ov_p),\; p\in S_f \end{array} \right\}
\]
for some continuous functions $\rho_\infty : \SS^{d-1}\rightarrow \RR_{>0}$ and $\rho_p : \left(\ZZ_p^d-p\ZZ_p^d\right) \rightarrow \RR_{>0}$ $(p\in S_f)$. By the definition, any star-shaped set is bounded.
%%%%%%%%%%%%%%%%%%%%%%%%%%%%%%%%%
\begin{theorem}\label{AM Thm 1.3} Let $d\ge 3$.
Consider a star-shaped set $A_0\subset \QQ_S^d$ centered at the origin and assume that $\mu(A_0)=1$. 
Let $\delta>0$.
Then for almost every unimodular $S$-lattice $\Lambda$, there is $\T_0\succeq 0$ such that 
\[\left|\NT(\Lambda, \T A_0) - |\T|^d\right| < |\T|^{\frac 1 2 d + \delta},\; \forall \T \succeq \T_0,
\]
where $\T A_0$ is the dilate of $A_0$ by $\T$.
%$\T A=\{(v_\infty, v_1, \ldots, v_s)\in \QQ_S^d : v_\infty/T_\infty \in A_\infty \text{ and } T_i v_i \in A_i,\;\forall i\}$.
%for almost every $S$-lattice $\Lambda$ in $\SL_n(\QQ_S)/\SL_n(\ZZ_S)$. 
\end{theorem}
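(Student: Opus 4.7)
The plan is to deduce Theorem~\ref{AM Thm 1.3} by verifying that the family $f_\T := \one_{\T A_0}$, $\T \succeq 0$, meets the hypotheses of the $S$-arithmetic analogue of Athreya--Margulis's Lemma~2.1 from \cite{AM18} whose validity for star-shaped sets is the content of Section~4 of the paper. The inputs to be checked are the mean and the variance of $\widetilde{f_\T}$: the mean is immediate from Proposition~\ref{HLM Proposition 3.11}, which yields $\int_{\SG/\Gamma} \widetilde{f_\T}\,d\sg = \mu(\T A_0) = |\T|^d$, and the output will be the stated almost-everywhere fluctuation estimate.

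For the variance, I apply Corollary~\ref{Corollary L2 norm} to $f_\T$. The leading term $\bigl(\int f_\T\, d\ov\bigr)^2 = |\T|^{2d}$ cancels, and each remaining summand is the $\mu$-measure of an intersection of the form $q^{-1}\T A_0 \cap (\pp w^{-1})\T A_0$. Because $A_0$ is star-shaped and the coprimality condition $\gcd(q\pp, w) = 1$ together with $q \in \NN_S$, $\pp \in \PP_S$ controls the $p$-adic absolute values componentwise, a direct computation bounds this intersection by $|\T|^d \max(q\pp, |w|)^{-d}$. Setting $R = q\pp$ and summing first over $w$ coprime to $R$ gives $\sum_w \max(R, |w|)^{-d} \ll R^{1-d}$, and $\sum_R R^{1-d}$ converges for $d \geq 3$; therefore $\int_{\SG/\Gamma} (\widetilde{f_\T} - |\T|^d)^2 \, d\sg = O_{A_0}(|\T|^d)$.

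Given the mean and the variance, the argument runs by Borel--Cantelli over a countable grid $\Sigma$ of radius vectors followed by interpolation via monotonicity. For each $\T \in \Sigma$, Chebyshev yields
\[
m_{\SG}\bigl(\{\Lambda : |\widetilde{f_\T}(\Lambda) - |\T|^d| > |\T|^{d/2 + \delta/2}\}\bigr) \ll_{A_0} |\T|^{-\delta}.
\]
Only the archimedean parameter $t_\infty$ needs to be discretized, because the non-archimedean radii $T_p$, $p \in S_f$, already live in $p^{\ZZ}$. For $\T$ with $t_\infty$ between two consecutive grid points $\T^- \preceq \T \preceq \T^+$, the star-shaped structure of $A_0$ produces the monotone sandwich $\T^- A_0 \subseteq \T A_0 \subseteq \T^+ A_0$, so both $\NT(\Lambda, \T A_0)$ and $|\T|^d$ are trapped between their values at $\T^\pm$, and the estimate at $\T^\pm$ transfers to $\T$ provided the grid is chosen so that $|\T^+|^d - |\T^-|^d \ll |\T|^{d/2 + \delta}$.

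The main obstacle is the tension between convergence of $\sum_{\T \in \Sigma} |\T|^{-\delta}$, which wants $\Sigma$ sparse, and the interpolation requirement $|\T^+|^d - |\T^-|^d \ll |\T|^{d/2 + \delta}$, which forces the archimedean spacing to shrink like $|\T|^{-d/2 + \delta}$. On a naive log-geometric grid the second-moment method alone balances these only once $\delta$ exceeds a fixed positive constant, whereas Theorem~\ref{AM Thm 1.3} is asserted for every $\delta > 0$. Closing this gap is the substance of Section~4 and is expected to require either the higher-moment formulas of Theorem~\ref{MV Theorem 4} (available for $k \leq d-1$) to drive the single-$\T$ failure probability below every power of $|\T|^{-1}$, or a maximal inequality controlling $\sup_{\T^- \preceq \T \preceq \T^+} |\widetilde{f_\T}(\Lambda) - |\T|^d|$ in one step by exploiting the monotonicity of $\T \mapsto \NT(\Lambda, \T A_0)$; once such a uniform-in-$\T$ strengthening is in place for star-shaped sets, everything else is the standard Borel--Cantelli plus sandwich template.
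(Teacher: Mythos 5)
Your setup is correct: the mean comes from Siegel's formula, the variance bound $\int (\widetilde{f_\T}-|\T|^d)^2\,d\sg = O_{A_0}(|\T|^d)$ is exactly what Proposition~\ref{Prop class F}(a)--(b) supplies for dilates of a product set (and every star-shaped set is a product set), and you have precisely located the obstruction: a single-scale Chebyshev bound of $O(|\T|^{-\delta})$ against the $\sim |\T|^{d/2}$ archimedean grid points needed for the sandwich forces $\delta$ above a dimensional threshold, whereas the theorem is asserted for every $\delta>0$. You also correctly guess that the remedy is a maximal-type control exploiting the monotonicity of $N\mapsto \NT(\Lambda,\T A_0)$ in the archimedean parameter. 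However, your proposal stops there and does not supply this step, and it is precisely the content of the theorem. (A minor mis-aim: you frame the plan around the Athreya--Margulis Lemma~2.1 analogue, i.e.\ Theorem~\ref{LLUF Theorem 2.2}, but that lemma feeds the Oppenheim application, Theorem~\ref{AM Thm 1.1}; for the Gauss-circle statement the paper instead uses a Schmidt-type lemma.)

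The missing step is Schmidt's dyadic decomposition (Theorem~\ref{Sch Lemma 2} in the paper, after \cite{Sch60}). For annular differences $B = \T_2 A_0 - \T_1 A_0$ with $\mu(B)=(N_2-N_1)\mu(A_0)$, Proposition~\ref{Prop class F}(c) gives the variance bound $\int (\widetilde{I_B}-\mu(B))^2\,d\sg \le C\,\mu(A_0)(N_2-N_1)$ with a constant uniform over all nested pairs. Summing this over the $O(2^\ell)$ dyadic pairs $(u2^t,(u+1)2^t)$, $0\le t\le \ell$, $0\le u< 2^{\ell-t}$, the total variance is $\ll (\ell+1)2^\ell$ rather than the $2^{2\ell}$ you would get by bounding the variance at the top scale alone; Chebyshev then gives a bad set of measure $\ll \psi(\ell)^{-1}$ for the aggregate. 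Outside a Borel--Cantelli exceptional set (summable when $\psi(\ell)=2^{\ell\delta'}$), one recovers the pointwise estimate at any $N<2^\ell$ by writing $N$ in binary as a sum of at most $\ell$ dyadic blocks and applying Cauchy--Schwarz, yielding $|\NT - N|^2 \ll \ell^2\, 2^{\ell(1+\delta')} \ll (\log N)^2 N^{1+\delta'}$; taking $\delta'<2\delta/d$ and $N=|\T|^d$ gives the claimed $o(|\T|^{d/2+\delta})$ for all $\delta>0$. The final sandwich over non-integer $|\T|^d$ is then routine, as you say. In short, your diagnosis of the gap is accurate, but the proof is incomplete without the dyadic variance aggregation; the second-moment bound at a single scale cannot by itself produce the all-$\delta>0$ statement.
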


%%%
In the real case, Schmidt \cite{Sch60} obtained the uniform error bound of $\NT(\Lambda, A) - \mu(A)$, which is given as a function of the volume $\mu(A)$, for a family $\Phi$ of bounded Borel sets in $\RR^d$ such that (i) if $A,\; B\in \Phi$, either $A \subseteq B$ or $B \subseteq A$ and (ii) there exists $S\in \Phi$ with arbitrarily large volume. The Schmidt's theorem may not hold for such a family of bounded Borel sets in $\QQ_S^d$ (see Proposition \ref{Prop class F} (c) and the remark below).

%For the case of random lattices, Schmidt \cite{Sch60} showed that for almost every lattice $\Lambda \subset \RR^d$,
%$$\NT(\Lambda, tA)-\vol(tA)=O(\log t\cdot t^{d/2}\psi(\log t)),$$
%where $\psi$ is a positive nondecreasing function such that $\int_0^\infty \psi^{-1}(x) dx <\infty$. 
%
%We modify the Schmidt's theorem to the case of the space of unimodular lattices in $\RR^d$ and extend it to the space of unimodular $S$-lattices.

%%%%%%%%%%%%%%%%%%%%%%%%%%%%%%%%
% section %%%%%%%%%%%%%%%%%%%%%%
%%%%%%%%%%%%%%%%%%%%%%%%%%%%%%%%
\section{Proof of Theorem~\ref{MV Theorem 4}}\label{Proof of main theorem}
Let $q\in \NN_S$, $1\le r\le k$ and $D\in \mathfrak D_{r,q}$ be as in Notation~\ref{notations} (2). For $\Lambda \in \SG/\Gamma$,
define the set
\begin{equation}\label{Phi set}
\Phi(q,D)=\left\{\left(\ow_1, \ldots, \ow_r\right)\in (\ZZ_S^d)^r : \begin{array}{c}\rk(\ow_1, \ldots, \ow_r)=r,\\
\frac 1 q(\ow_1, \ldots, \ow_r)D\in (\ZZ_S^d)^k \end{array}\right\}
\end{equation}
and for $\sg\in \UL_d(\QQ_S)$, $\sg\Phi(q,D)=\{(\sg\ow_1, \ldots, \sg\ow_r) : (\ow_1, \ldots, \ow_r)\in \Phi(q,D)\}$.

Recall Notation \ref{notations}.
We first claim that for a continuous bounded function $f:(\QQ_S^d)^k \rightarrow \RR_{\ge 0}$ of compact support, we have
\begin{equation}\label{integral: partition}\begin{split}
\int_{\SG/\Gamma}\Siegel{k}{f}(\sg\Gamma)d\sg
&=\int_{\SG/\Gamma} \sum_{\ov_1, \ldots, \ov_k \in \sg\Gamma} f(\ov_1, \ldots, \ov_k) d\sg\\
&\hspace{-0.6in}=f(\origin,\ldots,\origin) 
+\int_{\SG/\Gamma}
%\sum_{\scriptsize \begin{array}{c}\ov_1, \ldots, \ov_k \in \sg\Gamma\\
%\rk\left(\ov_1, \ldots, \ov_k\right)=k \end{array}} 
\sum_{\scriptsize \begin{array}{c}
(\ow_1, \ldots, \ow_k)\\
\in \sg\Phi(1, \id_k)\end{array}}
f(\ov_1, \ldots, \ov_k)d\sg\\
&\hspace{-0.6in}+\sum_{r=1}^{k-1} \sum_{q\in \NN_S} \sum_{D \in \mathfrak D_{r,q}}
\int_{\SG/\Gamma}
\sum_{\scriptsize \begin{array}{c}(\ow_1, \ldots, \ow_r)\\ \in \sg\Phi(q,D)\end{array}}
f\Big(\frac 1 q \left(\ow_1, \ldots, \ow_r\right)D\Big)d\sg.
\end{split}\end{equation}

%%%%%%%%%%
For $\ov_1, \ldots, \ov_k\in \ZZ_S^d$, let $r=\rk\left(\ov_1, \ldots, \ov_k\right)$. 
Suppose that $r\neq 0, k$.
Then there are indices $1\le j_1< \cdots<j_r\le k$ such that 
\begin{enumerate}[1)]
\item $\rk\left(\ov_{j_1}, \ldots, \ov_{j_r}\right)=r$;
\item for $j_i\le j<j_{i+1}$,
$\ov_j \in \QQ \ov_{j_1}\oplus \cdots \oplus \QQ\ov_{j_{i}}$.
\end{enumerate}

%%%%%%%%%%
If we put $\ow_1=\ov_{j_1}, \ldots, \ow_r=\ov_{j_r}$,
there is $C\in \mathfrak M_{r,k}(\QQ)$ such that
$\left(\ow_1, \ldots, \ow_r\right)C=\left(\ov_1, \ldots, \ov_k\right)$ and
\begin{enumerate}[1)]
\item $[C]^{j_{i}}=\ve_{i}$, where $\{\ve_1, \ldots, \ve_r\}$ is a canonical basis of $\QQ_S^r$;
\item $C_{ij}=0$ for $1\le i\le r$ and $1\le j <j_i$.
\end{enumerate}

%%%%%%%%%%
Choose $q \in \NN_S$ such that $D=qC \in \mathfrak M_{r,k}(\ZZ_S)$ is reduced and
\begin{enumerate}[1)]
\item $[D]^{j_i}=q\ve_{i}$ and
\item $D_{ij}=0$ for $1\le i\le r$ and $1\le j <j_i$.
\end{enumerate}

Note that when $r=k$, only possible case is that $q=1$ and $D\in \mathfrak D_{r,q}$ is the identity matrix $\id_k$. Hence we have the following partition 
\[\begin{split}
&\left\{\left(\ov_1, \ldots, \ov_k\right) : \ov_j \in \ZZ_S^d\right\}\\
&\hspace{0.4in}= \{(\origin,\ldots,\origin)\}
\sqcup\left\{(\ov_1, \ldots, \ov_k) : \rk\left(\ov_1, \ldots, \ov_k\right)=k\right\}\\
&\hspace{0.7in}\sqcup\bigcupdot_{r=1}^{k-1}\bigcupdot_{q\in \NN_S}\bigcupdot_{D\in \mathfrak D_{r,q}}
\left\{\frac 1 q \left(\ow_1, \ldots, \ow_r\right)D : \left(\ow_1, \ldots, \ow_r\right) \in \Phi(q,D)\right\},
\end{split}\]
which shows the claim. 

Hence with \eqref{integral: partition}, Theorem~\ref{MV Theorem 4} is a direct consequence of the theorem below,
including the finiteness of the integral of $\Siegel{k}{f}$. 

%%%%%%%%%%%%%%%
\begin{theorem}\label{middle main}
Let $d\ge 3$ and $1\le k\le d-1$. 
Let $q\in \NN_S$, $1\le r\le k$ and $D\in \mathfrak D_{r,q}$ be as in Notation~\ref{notations} (2).
For a continuous bounded function $f:(\QQ_S^d)^k\rightarrow \RR_{\ge0}$ of compact support, it follows that
\[\begin{split}
&\int_{\SG/\Gamma} \sum_{\scriptsize \begin{array}{c}
(\ow_1, \ldots, \ow_r)\\
\in \sg\Phi(q,D)\end{array}}
f\Big(\frac 1 q \left(\ow_1, \ldots, \ow_r\right)D\Big)d\sg\\
&\hspace{1in}=\frac {N(D,q)^d} {q^{dr}}  \int_{(\QQ_S^d)^r} 
f\Big(\frac 1 q\left(\ov_1, \ldots, \ov_r\right)D\Big)d\ov_1 \cdots d\ov_r,
\end{split}\]
where $N(D,q)$ is defined as in Notation~\ref{notations} (3).
\end{theorem}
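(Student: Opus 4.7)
The plan is to unfold the left-hand integral via the $\Gamma$-action on $\Phi(q,D)$, reduce each orbit contribution to a Lebesgue integral on $(\QQ_S^d)^r$, and then identify the overall constant as the residue density $N(D,q)^d/q^{dr}$.

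First I would verify that $\Phi(q,D)\subset(\ZZ_S^d)^r$ is $\Gamma$-invariant under the diagonal action $\gamma\cdot(\ow_1,\dots,\ow_r)=(\gamma\ow_1,\dots,\gamma\ow_r)$: rank and integrality are preserved by $\gamma$, and the linear condition $\ow D\equiv 0\pmod q$ commutes with $\gamma$ on the left. Fix representatives $\{\ow^{(i)}\}_i$ of the $\Gamma$-orbits with stabilizers $\Gamma^{(i)}=\Gamma\cap P^{(i)}$, where $P^{(i)}=\{\sg\in\SG:\sg\ow^{(i)}=\ow^{(i)}\}$. The standard unfolding yields
\[
\int_{\SG/\Gamma}\sum_{\ow\in\sg\Phi(q,D)}f\bigl(\tfrac{1}{q}\ow D\bigr)d\sg=\sum_i\int_{\SG/\Gamma^{(i)}}f\bigl(\tfrac{1}{q}\sg\ow^{(i)}D\bigr)d\sg.
\]
Since the integrand is right $P^{(i)}$-invariant (as $p\ow^{(i)}=\ow^{(i)}$ for $p\in P^{(i)}$), each term factors as $m(P^{(i)}/\Gamma^{(i)})\int_{\SG/P^{(i)}}f(\tfrac{1}{q}\sg\ow^{(i)}D)d\sg$. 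Because $r\le d-1$, the group $\SG$ acts transitively on rank-$r$ matrices in $\mathfrak M_{d,r}(\QQ_S)$ with trivial Jacobian ($|\det\sg|^r=1$), so the quotient measure on $\SG/P^{(i)}$ pushes forward to a fixed multiple $c_0$ of Lebesgue measure, independent of $i$; summing gives
\[
\mathrm{LHS}=c_0\Bigl(\sum_im(P^{(i)}/\Gamma^{(i)})\Bigr)\int_{(\QQ_S^d)^r}f\bigl(\tfrac{1}{q}XD\bigr)dX.
\]

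It remains to show $c_0\sum_im(P^{(i)}/\Gamma^{(i)})=N(D,q)^d/q^{dr}$. This is a density identification: since $q\in\NN_S$ gives $\ZZ_S/q\ZZ_S\cong\ZZ/q\ZZ$, the condition $\ow D\equiv 0\pmod q$ depends only on $\ow\bmod q\in((\ZZ/q\ZZ)^d)^r$ and decouples into $d$ independent row-conditions, each admitting exactly $N(D,q)$ out of $q^r$ valid residues by the very definition of $N(D,q)$; hence $\Phi(q,D)$ has density $N(D,q)^d/q^{dr}$ among the rank-$r$ tuples in $(\ZZ_S^d)^r$. To pin down $c_0$ without enumerating orbits, I would calibrate against the case $q=1$: the congruence then becomes vacuous, $\Phi(1,D)$ is the set of all rank-$r$ tuples in $\ZZ_S^d$, and the identity reduces to the $r$-fold mean value formula, itself a direct generalization of Proposition~\ref{HLM Proposition 3.11} obtained by the same unfolding under a trivial congruence.

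The main obstacles are twofold. First, the finiteness of the left-hand side is not automatic because $\Siegel{k}{f}$ is unbounded on $\SG/\Gamma$; justifying the interchange of sum and integral is precisely what the $\alpha$-function estimates of Section~3 are designed for. Second, the $\Gamma$-orbits in $\Phi(q,D)$ are parametrized by the elementary-divisor data of the $\ZZ_S$-span of the tuple, so the sum $\sum_im(P^{(i)}/\Gamma^{(i)})$ is in general an infinite series whose convergence and identification with $N(D,q)^d/q^{dr}$ must be handled with care. A cleaner alternative I would pursue if orbit enumeration becomes unwieldy is to bypass it entirely by working on $(\SG\times\Phi(q,D))/\Gamma$ with the diagonal $\Gamma$-action and applying Fubini once integrability is secured, letting the residue-density factor emerge directly from a congruence count.
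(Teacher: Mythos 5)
Your unfolding strategy is a genuinely different route from the paper's. The paper never enumerates $\Gamma$-orbits of $\Phi(q,D)$: instead it restricts to test functions $F\in C_c\bigl((\QQ_S^d)^r-E\bigr)$ (where $E$ is the locus of place-wise linear dependence), shows via the $\alpha_S$-bound (Propositions~\ref{bounded of alpha} and~\ref{Schmidt}) that $F\mapsto\int_{\SG/\Gamma}\widehat F\,d\sg$ is a finite $\SG$-invariant positive functional, invokes Riesz--Markov--Kakutani plus transitivity of $\SG$ on $(\QQ_S^d)^r-E$ to conclude $\int\widehat F\,d\sg=c\int F\,d\mu$ for a single constant $c$, pins down $c=N(D,q)^d/q^{dr}$ by lattice-point counting in expanding boxes (Lemma~\ref{pixel lemma}), and finally extends to the non-compactly-supported $F(\ov_1,\dots,\ov_r)=f(\tfrac1q(\ov_1,\dots,\ov_r)D)$ by a truncation argument controlled again by $\alpha_S$. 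You instead propose the classical Siegel-style unfolding $\int_{\SG/\Gamma}\widehat F\,d\sg=\sum_i\int_{\SG/\Gamma^{(i)}}$, which is where Rogers originally worked and which the paper deliberately avoids.

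The gap in your version is exactly where you flag it, and it is not a minor one. After unfolding, you must show $\sum_i m(P^{(i)}/\Gamma^{(i)})$ converges and that the product $c_0\sum_i m(P^{(i)}/\Gamma^{(i)})$ equals the congruence density $N(D,q)^d/q^{dr}$. The calibration against $q=1$ does not deliver this on its own: as $q$ varies, the set of $\Gamma$-orbits in $\Phi(q,D)$ changes (each orbit lies entirely inside or entirely outside $\Phi(q,D)$), and the orbit weights $m(P^{(i)}/\Gamma^{(i)})$ depend on the elementary-divisor type of the $\ZZ_S$-span of $\ow^{(i)}$. Turning ``$\Phi(q,D)$ has natural density $N(D,q)^d/q^{dr}$ among rank-$r$ tuples'' into ``the $\Gamma$-orbit mass of $\Phi(q,D)$ is $N(D,q)^d/q^{dr}$ times the $\Gamma$-orbit mass of all rank-$r$ tuples'' is a genuine class-number identity that must be proved, not read off. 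There is also a normalization subtlety you should be explicit about: $c_0$ and the $m(P^{(i)}/\Gamma^{(i)})$ are only well-defined jointly, and the claim that $c_0$ is ``independent of $i$'' requires transporting Haar measure along the conjugations $P^{(i)}=\sg_i P\sg_i^{-1}$ consistently. The paper's pixel-counting route sidesteps all of this, which is precisely its advantage: it converts the constant into a covolume computation for the lattice $\Psi(q,D)$ (Lemma~\ref{pixel lemma}(a)), where the residue count $N(D,q)^d/q^{dr}$ appears directly and only once. Your alternative sketch via Fubini on $(\SG\times\Phi(q,D))/\Gamma$ is the more promising way to rescue the unfolding approach, but as written it is not developed far enough to see that the congruence count actually closes the argument; to make it work you would still need either the paper's compactness/$\alpha_S$ machinery to justify Fubini, or an independent integrability argument.
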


%%%
Before proving Theorem~\ref{middle main}, we need following propositions and the lemma.
%%%%%%%%%%%%%%%%%%%
\begin{proposition}\label{bounded of alpha}
Let $\alpha^{}_S$ be the function on the space of $S$-lattices in $\QQ_S^d$ defined by
\begin{equation}\label{alpha function}
\alpha^{}_S(\Lambda):=\sup_{1\le j\le d}\left\{\prod_{p\in S}\|\ov_1\wedge \cdots \wedge\ov_j\|_p^{-1} : \begin{array}{cc}
\ov_1, \ldots, \ov_j\in \Lambda,\\
\text{linearly independent}\end{array}\right\}.
\end{equation}
One can think of $\alpha^{}_S$ as the function on $\SG/\Gamma$.
Then for any $1\le r' <d$,
\[\int_{\SG/\Gamma} \alpha^{}_S(\sg\ZZ_S^d)^{r'} d\sg<\infty.\] 
\end{proposition}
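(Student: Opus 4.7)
The plan is to adapt the standard proof of integrability of the classical $\alpha$-function on $\SL_d(\RR)/\SL_d(\ZZ)$ (due to Schmidt, cf.\ also the treatments of Eskin--Mozes--Shah and Kleinbock--Margulis) to the $S$-arithmetic setting via reduction theory. For each $p\in S$ I fix an Iwasawa decomposition $\GL_d(\QQ_p)=K_pA_pN_p$ with $K_p$ a maximal compact subgroup, $A_p$ the diagonal torus and $N_p$ the upper-triangular unipotent, and set $K=\prod_{p\in S}K_p$, $A=\prod_{p\in S}A_p$, $N=\prod_{p\in S}N_p$, all intersected with $\SG$. The $S$-arithmetic Borel--Harish-Chandra reduction theorem (see, e.g., Platonov--Rapinchuk) then produces a Siegel fundamental set $\mathcal F\subseteq K\cdot A^+\cdot\Omega_N$ with $\Omega_N\subseteq N$ compact, $A^+\subseteq A$ a truncated positive chamber, and $\SG=\mathcal F\cdot\Gamma$ with bounded multiplicity; so it suffices to bound the integral of $\alpha_S^{r'}$ over $\mathcal F$.

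Write $\sg=\sk\sa\mathsf n\in\mathcal F$ with $\sa_p=\diag(a_{1,p},\ldots,a_{d,p})$ at each $p\in S$, and put $|a_i|_S:=\prod_{p\in S}|a_{i,p}|_p$; unimodularity of $\sg$ at every place gives $\prod_i|a_i|_S=1$, and in the positive chamber $|a_1|_S\ge\cdots\ge|a_d|_S$. A standard Minkowski-type estimate on a Siegel set (using that the $K_p$- and $N_p$-components at every finite place $p$ can be absorbed into $\Gamma$ by strong approximation) gives
\begin{equation*}
\prod_{p\in S}\|\sg\ov_1\wedge\cdots\wedge\sg\ov_j\|_p \;\ge\; c\prod_{i=1}^{j}|a_{d-i+1}|_S
\end{equation*}
for every linearly independent $\ov_1,\ldots,\ov_j\in\ZZ_S^d$, and hence
\begin{equation*}
\alpha_S(\sg\ZZ_S^d)\;\le\; C\max_{0\le k\le d-1}\prod_{i=1}^{k}|a_i|_S.
\end{equation*}

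The Haar measure on $\SG$ in Iwasawa coordinates factors as $d\sg=\delta(\sa)\,d\sk\,d\sa\,d\mathsf n$ with modular function $\delta(\sa)=\prod_{i<j}|a_i|_S/|a_j|_S$, up to a positive constant. Since $K$ and $\Omega_N$ have finite measure, the proposition reduces to the convergence of
\begin{equation*}
\int_{|a_1|_S\ge\cdots\ge|a_d|_S,\,\prod_i|a_i|_S=1}\Big(\max_{0\le k\le d-1}\prod_{i=1}^{k}|a_i|_S\Big)^{r'}\prod_{i<j}\frac{|a_i|_S}{|a_j|_S}\,d\sa.
\end{equation*}
Passing to logarithmic coordinates $u_i=\log|a_i|_S$ subject to $\sum u_i=0$, and then to simple-root coordinates $v_i=u_i-u_{i+1}\ge 0$ for $i=1,\ldots,d-1$, the integrand becomes a sum of exponentials in $(v_1,\ldots,v_{d-1})\in\RR_{\ge 0}^{d-1}$; a direct check shows that each of the $d$ resulting pieces (one per value of $k$) converges precisely when $r'<d$. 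The principal obstacle is the pointwise bound on $\alpha_S$ above: one must verify that $\alpha_S$, which is defined via the $\QQ$-structure on $\QQ_S^d$, is genuinely controlled by the combined products $|a_i|_S$ rather than by the individual $|a_{i,p}|_p$'s, despite the fact that $\SL_d(\ZZ_S)$ is an irreducible, non-product lattice in $\SL_d(\QQ_S)$; this is what forces the use of strong approximation in the second step.
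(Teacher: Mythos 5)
Your proposal takes a genuinely different route from the paper's, and the difference is where the gap lies. The paper's proof does no $S$-arithmetic reduction theory at all: it invokes strong approximation at the level of the \emph{homogeneous space}, writing $\UL_d(\QQ_S)/\UL_d(\ZZ_S)$ as $\SL_d(\RR)/\SL_d(\ZZ)\times\prod_{p\in S_f}\UL_d(\ZZ_p)$, so that a fundamental domain has its entire finite-place component inside the compact groups $\UL_d(\ZZ_p)$. On such a domain one verifies the exact identity $\alpha_S(\sg\ZZ_S^d)=\alpha_{\{\infty\}}(g_\infty\ZZ^d)$ (this uses that for $\ou_1,\ldots,\ou_j\in\ZZ_S^d$ the global product $\prod_{p\in S}\|\ou_1\wedge\cdots\wedge\ou_j\|_p$ is never smaller than the archimedean norm of a primitive integral representative, by the product formula and the fact that $g_p\in\UL_d(\ZZ_p)$ preserves $p$-adic norms). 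Integrability then reduces in one step to the classical real-place result of Schmidt/EMM, and for $\SL_d(\QQ_S)$ the paper simply cites \cite{HLM}, Lemma 3.10.

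Your argument instead runs the Siegel-set computation directly over $\QQ_S$, and the place where you say "this is what forces the use of strong approximation" is exactly where the proof is missing. You absorb only the $K_p$- and $N_p$-components at finite places into $\Gamma$; but strong approximation in fact lets you take the \emph{whole} finite-place component of a representative in $\prod_p\UL_d(\ZZ_p)$, so the $A_p$-parts at $p\in S_f$ do not contribute non-compactness and should not be free integration variables. As written, your chamber $A^+\subseteq\prod_pA_p$ is higher-dimensional than the fundamental domain warrants, your constraint $\prod_i|a_i|_S=1$ is strictly weaker than the per-place determinant conditions the group actually imposes, and the Minkowski-type lower bound on $\prod_p\|\sg\ov_1\wedge\cdots\wedge\sg\ov_j\|_p$ is asserted for $\ov_i\in\ZZ_S^d$ without controlling the denominators those vectors introduce at the finite places — which is precisely the content of the identity $\alpha_S(\sg\ZZ_S^d)=\alpha(g_\infty\ZZ^d)$ that does the work in the paper. (Separately, in the $\sk\sa\mathsf n$ form the density $\prod_{i<j}|a_i|_S/|a_j|_S$ grows on the chamber $|a_1|_S\ge\cdots\ge|a_d|_S$; either the chamber or the modular factor needs to be inverted before the exponential-coordinate computation converges.) Once the finite places are properly compactified, your final bookkeeping in simple-root coordinates is correct and gives $r'<d$, but at that point you have recovered the paper's reduction to the real case rather than bypassed it.
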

\begin{proof} See Lemma 3.10 in \cite{HLM} for $\SL_d(\QQ_S)/\SL_d(\ZZ_S)$ and also Lemma 3.10 in \cite{EMM} for $\SL_d(\RR)/\SL_d(\ZZ)$.

Note that $\UL_d(\QQ_S)/\UL_d(\ZZ_S)\simeq \SL_d(\RR)/\SL_d(\ZZ)\times \prod_{p\in S_f} \UL_d(\ZZ_p)$.
Let $\pi:\UL_d(\QQ_S)/\UL_d(\ZZ_S) \rightarrow \SL_d(\RR)/\SL_d(\ZZ)$ be the projection defined by
\[\pi(\sg \Gamma)=g'_\infty \SL_d(\ZZ),\] 
where $g'_\infty\in \SL_d(\RR)$ is taken such that there is $(g'_p)_{p\in S_f}\in \prod_{p\in S_f}\UL_d(\ZZ_p)$ for which $\sg \Gamma=(g'_p)_{p\in S} \Gamma$.
Similar to the $\SL_d(\QQ_S)$-case, one can check that $\alpha^{}_S(\sg \ZZ_S^d)=\alpha(g'_\infty \ZZ^d)$, where
and $\alpha=\alpha_{\{\infty\}}$. 

Hence by replacing $\SL_d(\QQ_S)$ and $\SL_d(\ZZ_S)$ by $\UL_d(\QQ_S)$ and $\UL_d(\ZZ_S)$, respectively, in the proof of Lemma 3.10 in \cite{HLM}, we obtain the proposition.
\end{proof}

%%%%%%%%%%%%%%%%%%%
\begin{proposition}\label{Schmidt}
Suppose that a bounded function $f:\QQ_S^d\rightarrow \RR_{\ge 0}$ is supported on $B_{b_\infty}(\origin)\times \prod_{p\in S_f} p^{-b_p}\ZZ_p^d$, where $B_{b_\infty}(\origin)$ is the ball of radius $b_\infty$ centered at the origin.
There is $c_d>0$ depending only on the dimension $d$ such that for any unimodular lattice $\Lambda$, it holds that
\[
\widetilde f(\Lambda) < c_d \Big(b_\infty\times \prod_{p\in S_f} p^b_p\Big)^d \|f\|_{\sup} \cdot\alpha^{}_S(\Lambda).
\]
\end{proposition}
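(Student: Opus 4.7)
The plan is to reduce the bound on $\widetilde f(\Lambda)$ to a lattice-point counting problem inside the support of $f$, and then to control that count using the definition of $\alpha^{}_S$ together with the $S$-arithmetic Minkowski second theorem. Write $R:=b_\infty\cdot\prod_{p\in S_f}p^{b_p}$ and let $B:=B_{b_\infty}(\origin)\times\prod_{p\in S_f}p^{-b_p}\ZZ_p^d$ be the support box. Since $f\le \|f\|_{\sup}\cdot\one_B$,
\[
\widetilde f(\Lambda)\ \le\ \|f\|_{\sup}\cdot\#\bigl((\Lambda-\{\origin\})\cap B\bigr),
\]
so it suffices to exhibit a dimensional constant $c_d$ with $\#(\Lambda\cap B)< c_d\,R^d\,\alpha^{}_S(\Lambda)$.

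Next I would pick a maximal $\QQ_S$-linearly independent set of $r\le d$ vectors in $\Lambda\cap B$, and replace it by a $\ZZ_S$-basis $\ou_1,\dots,\ou_r$ of the primitive sublattice $\Lambda_r:=\Lambda\cap \spn_{\QQ_S}(\ou_1,\dots,\ou_r)$. By maximality $\Lambda\cap B\subseteq\Lambda_r$, and directly from the definition \eqref{alpha function},
\[
\prod_{p\in S}\|\ou_1\wedge\cdots\wedge\ou_r\|_p^{-1}\ \le\ \alpha^{}_S(\Lambda).
\]
The left-hand quantity is, up to a uniform constant, the reciprocal of the $r$-dimensional $S$-arithmetic covolume of $\Lambda_r$ inside its $\QQ_S$-span.

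The core step is then a box-counting bound for the rank-$r$ $\ZZ_S$-lattice $\Lambda_r$ inside $B$. Applying the $S$-arithmetic Minkowski second theorem to $\Lambda_r$ inside $B\cap\spn_{\QQ_S}(\ou_1,\dots,\ou_r)$, and using the product structure of $B$ with $p$-adic radii $R_\infty=b_\infty$ and $R_p=p^{b_p}$, I would derive
\[
\#(\Lambda_r\cap B)\ \le\ c_d\cdot \frac{\prod_{p\in S}R_p^{\,r}}{\prod_{p\in S}\|\ou_1\wedge\cdots\wedge\ou_r\|_p}\ \le\ c_d\,R^{\,r}\,\alpha^{}_S(\Lambda).
\]
Restricting to the (essentially generic) case $R\ge 1$, we have $R^r\le R^d$, and Proposition~\ref{Schmidt} follows.

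The main obstacle is this final counting estimate. In the purely archimedean case it is a standard consequence of successive minima and the Hadamard inequality, as in Schmidt's original treatment. The $S$-arithmetic extension requires combining archimedean and non-archimedean data simultaneously: the non-archimedean contributions are in fact cleaner thanks to the ultrametric property (the intersection of a free $\ZZ_p$-module with a $\QQ_p$-subspace is again free of the expected rank), but the dimensional constants must be tracked across all places at once. I expect the argument to run parallel to the proof of Lemma 3.10 of \cite{HLM}, of which this pointwise estimate is a natural companion.
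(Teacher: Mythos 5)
Your proposal takes a genuinely different route from the paper. You argue directly at the $S$-arithmetic level, passing through successive minima and an $S$-arithmetic Minkowski second theorem. The paper instead reduces the entire problem to the real case: it uses the decomposition $\UL_d(\QQ_S)/\UL_d(\ZZ_S)\simeq \SL_d(\RR)/\SL_d(\ZZ)\times\prod_{p\in S_f}\UL_d(\ZZ_p)$ to write $\Lambda=\sg\ZZ_S^d$ with $g_p\in\UL_d(\ZZ_p)$ at each finite place, observes that $g_p$ then stabilizes $p^{-b_p}\ZZ_p^d$, uses $\ZZ_S^d\cap p^{-b_p}\ZZ_p^d=(1/p^{b_p})\ZZ^d$ to collapse the $p$-adic conditions into a rescaling, and identifies the resulting count with $\widetilde{f'}(g_\infty\ZZ^d)$ for an indicator $f'$ of a real ball of radius $b_\infty\prod p^{b_p}$. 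Since $\alpha_S(\sg\ZZ_S^d)=\alpha_{\{\infty\}}(g_\infty\ZZ^d)$, the bound then follows from a single invocation of Schmidt's real lemma (\cite[Lemma 2]{Sch1968}). This sidesteps all $S$-arithmetic geometry-of-numbers machinery.

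Two substantive gaps remain in your version. First, you invoke an ``$S$-arithmetic Minkowski second theorem'' to pass from the covolume of $\Lambda_r$ to a lattice-point count in the box $B$, but you neither prove it nor check that the implied constant depends only on $d$ (and not on $S$), which is what the statement of Proposition~\ref{Schmidt} requires. The ultrametric structure at the finite places does make the intersection $p^{-b_p}\ZZ_p^d\cap W$ behave cleanly for a $\QQ_p$-subspace $W$, so the claim is plausible, but it is precisely the content one must establish, not assume; the paper's reduction makes this point moot by exporting all of the work to the already-proved real case. Second, you explicitly restrict to the ``essentially generic'' case $R\ge 1$. That restriction is not harmless: $R=b_\infty\prod p^{b_p}$ need not be $\ge 1$ (the archimedean radius $b_\infty$ can be arbitrarily small, and the $b_p$ can be negative), and for $R<1$ one has $R^r>R^d$ when $r<d$, so your final inequality $R^r\le R^d$ reverses. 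You would need a separate argument in that range — for instance, showing that $\alpha_S(\Lambda)$ is forced to be large whenever $\Lambda\cap B$ is nontrivial and $R$ is small — and this would have to be carried out to close the proof.
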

\begin{proof}
The real case is deduced from Lemma 2 in \cite{Sch1968}.
We may assume that $f$ is the indicator function of $B_{b_\infty}(\origin)\times \prod_{p\in S_f} p^{-b_p}\ZZ_p^d$ and $\Lambda=\sg\ZZ_S^d$, where $\sg=(g_p)_{p\in S}$ such that $g_p\in \UL_d(\ZZ_p)$ for $p\in S_f$. 

Since $g_p.(p^{-b_p}\ZZ_p^d)=p^{-b_p}\ZZ_p^d$, 
\[\begin{split}
\widetilde f(\sg\ZZ_S^d)
&=\#\Big(\left(\sg\ZZ_S^d-\{\origin\}\right) \cap \Big(B_{b_\infty}(\origin)\times \prod_{p\in S_f} p^{-b_p}\ZZ_p^d\Big)\Big)\\
&=\#\left(\left\{
\ov\in \ZZ_S^d : g_\infty\ov \in B_{b_{\infty}}(\origin)\;\text{and}\; \ov\in p^{-b_p}\ZZ_p^d 
\right\}\right)\\
&=\#\left(\left\{
\ow\in \prod_{p\in S_f} \ZZ^d : g_\infty\ow\in B_{b_\infty\times \prod p^{b_p}}(\origin)  
\right\}\right).
\end{split}\]
Here, we use the fact that $\ZZ_S^d \cap p^{-b_p}\ZZ_p^d=(1/p^{b_p})\ZZ^d$ and take $\ow$ by the projection of $\prod_{p\in S_f} p^{b_p}\:\ov$ to the infinite place.

Let $f'$ be the indicator function of $B_{b_\infty\times \prod_{p\in S_f} p^{b_p}}(\origin)$ in $\RR^d$. Since $\alpha^{}_{\{\infty\}}(g_\infty\ZZ^d)=\alpha^{}_S(\sg\ZZ_S^d)$, it follows that
\[\begin{split}
\widetilde f(\sg\ZZ_S^d)
&=\widetilde f'(g_\infty \ZZ^d)
<c_d \Big(b_\infty\times \prod_{p\in S_f} p^b_p\Big)^d \alpha^{}_{\{\infty\}}(g_\infty\ZZ^d)\\
&\hspace{1.5in}=c_d \Big(b_\infty\times \prod_{p\in S_f} p^b_p\Big)^d \alpha^{}_{S}(\sg\ZZ^d).
\end{split}\]
\end{proof}

%%%%%%%
\begin{lemma}\label{pixel lemma}
Let $q\in \NN_S$, $r\in \NN$ and $D\in \mathfrak D_{r,q}$ and $N(D,q)$ be as in Notation~\ref{notations} (2) and (3). Let $\Phi(q,D)$  be the set as in \eqref{Phi set}. Define
\[
\Psi(q,D)=\left\{(\ov_1, \ldots, \ov_r)\in (\ZZ_S^d)^r : \frac 1 q (\ov_1, \ldots, \ov_r)D \in (\ZZ_S^d)^k \right\}.
\]

Let $\pp=\prod_{p\in S_f} p$. For each $t\in \NN$, consider the set
\[
B_t=\Big(B_{\pp^t}(\origin)\times \hspace{-0.05in}\prod_{p\in S_f} p^{-t}\ZZ_p^d\Big)^r,
\]
where $B_{\pp^t}(\origin)$ is the ball of radius $\pp^t$ centered at the origin in $\RR^d$.
Then we have followings:
\begin{enumerate}[(a)]
%%%%%
\item
$\Psi(q,D)$ is an $S$-lattice in $(\QQ_S^d)^r$ whose covolume is
\[
\covol(\Psi(q,D))=q^{dr}/N(D,q)^d.
\]

%%%%%%
%\item For sufficiently large $t$,
%\[
%\#\left(\Psi(q,D)-\Phi(q,D)\right)
%\le r \Big(1+2\pp^{2t}\Big)^{d(r-1)}\times 2\Big((2\sqrt r)^t\pp^{2t}\Big)^{r-1}.
%\]
%In particular, $\#\left(\Psi(q,D)-\Phi(q,D)\right)/\vol(B_t)=o(\vol(B_t))$.

%%%%%
\item For any Borel set $B=\prod_{p\in S} B_p\subseteq (\QQ_S^d)^r$, where $B_\infty$ has a smooth boundary and $B_p\subseteq \QQ_p^d$ is such that $I_{B_p}$ is locally constant for $p<\infty$. 
For any compact set $\mathcal K\subseteq \SG/\Gamma$ and $\varepsilon>0$, there is $t_0\in \NN$ such that for any $\sg\in \mathcal K$ and $t\ge t_0$,
\[
\left|
\frac{\#(\pp^{-t}, p_1^t, \ldots, p_s^t)\sg\Psi(q,D) \cap B} {\mu(B)}
-\pp^{2tdr}\frac {N(D,q)^d}{q^{dr}}
\right|<\varepsilon.
\]
\end{enumerate}
\end{lemma}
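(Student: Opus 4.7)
The two parts are of different flavors: (a) is a purely algebraic index computation, whereas (b) is a uniform lattice-point asymptotic.

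For part (a), I would sandwich $\Psi(q,D)$ between $q(\ZZ_S^d)^r$ and $(\ZZ_S^d)^r$ and compute the two indices separately. Since $q\in\NN_S$ is coprime to $p_1\cdots p_s$, the integer $q$ is a unit in every $\ZZ_p$, $p\in S_f$, so $\ZZ_S/q\ZZ_S\cong\ZZ/q\ZZ$ by the Chinese remainder theorem, which gives $[(\ZZ_S^d)^r:q(\ZZ_S^d)^r]=q^{dr}$. On the other hand, writing $\ov_i=(v_i^{(1)},\ldots,v_i^{(d)})$, the defining condition $\frac{1}{q}(\ov_1,\ldots,\ov_r)D\in(\ZZ_S^d)^k$ decouples across the $d$ archimedean/coordinate slices: for each $\ell\in\{1,\ldots,d\}$ it reduces to the scalar condition $\frac{1}{q}\ox^{(\ell)}D\in\ZZ_S^k$ on the row vector $\ox^{(\ell)}=(v_1^{(\ell)},\ldots,v_r^{(\ell)})\in(\ZZ_S/q\ZZ_S)^r$, and by definition each has exactly $N(D,q)$ solutions modulo $q$. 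Hence $[\Psi(q,D):q(\ZZ_S^d)^r]=N(D,q)^d$. Dividing and using that $(\ZZ_S^d)^r$ is unimodular yields $\covol(\Psi(q,D))=q^{dr}/N(D,q)^d$.

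For part (b), the target quantity $\pp^{2tdr}N(D,q)^d/q^{dr}$ is exactly $1/\covol(\Lambda_t(\sg))$, where $\Lambda_t(\sg):=(\pp^{-t},p_1^t,\ldots,p_s^t)\sg\Psi(q,D)$; indeed, the diagonal scaling multiplies the covolume by $\pp^{-tdr}$ archimedeanly and by $p^{-tdr}$ at each $p\in S_f$, producing the factor $\pp^{-2tdr}$, while $\sg$ is unimodular. The assertion is therefore the standard lattice-point principle
\[
\#(\Lambda_t(\sg)\cap B)\sim \mu(B)/\covol(\Lambda_t(\sg))
\]
for a suitably nice $B$, with error uniform in $\sg\in\mathcal K$. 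I would prove this by a Riemann-sum comparison: pick a product fundamental domain $F_t$ for the finer sublattice $(\pp^{-t},p_1^t,\ldots,p_s^t)\sg\cdot q(\ZZ_S^d)^r$, multiply out the $N(D,q)^d$ index, and compare $\mu(B)=\sum_{\ov\in\Lambda_t(\sg)}\mu(B\cap(\ov+F_t))$ with $\#(\Lambda_t(\sg)\cap B)\cdot\mu(F_t)$. Error comes only from cells meeting $\partial B$. At each finite place $p\in S_f$, the local constancy of $I_{B_p}$ on some scale $p^{-t_p}\ZZ_p^d$ forces the count to be \emph{exact} at that place once $t\ge t_p$, so there is no non-archimedean boundary error. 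At infinity, the smoothness of $\partial B_\infty$ gives that the volume of its $\pp^{-t}$-thickening is $O(\pp^{-t})$, so the archimedean boundary contribution is $o(\mu(B))$.

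The main obstacle will be securing \emph{uniformity} in $\sg\in\mathcal K$. Because $\mathcal K$ is compact in $\SG/\Gamma$, Mahler's compactness criterion gives a uniform lower bound on the first minimum of $\sg\Psi(q,D)$, and lifting $\mathcal K$ to a bounded piece of $\SG$ gives uniform bounds on the matrix entries of $\sg$; these together bound the diameter of the fundamental cells and the archimedean thickening constant uniformly in $\sg$. Consequently a single threshold $t_0$, depending only on $\mathcal K$, $B$, and $\varepsilon$, suffices simultaneously at every place, and the non-archimedean exactness for $t\ge\max_p t_p$ together with the classical smooth-boundary Riemann-sum estimate at infinity completes the proof.
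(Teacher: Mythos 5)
Your proof takes essentially the same route as the paper: in (a) you sandwich $\Psi(q,D)$ between $q(\ZZ_S^d)^r$ and $(\ZZ_S^d)^r$ and compute the index via reduction mod $q$ (the paper's proof is the same computation, written as a single quotient), and in (b) you reduce to the classical Riemann-sum lattice-point count in a shrinking fundamental cell, with non-archimedean exactness from local constancy of $I_{B_p}$ and uniformity in $\sg$ from compactness of $\mathcal K$ — exactly the paper's strategy (which likewise cites the real-variable pixel count and observes that one can take a product fundamental domain with the finite-place factors equal to $(\ZZ_p^d)^r$). One small bookkeeping slip: since $F_t$ is a fundamental domain for the \emph{finer} sublattice $(\pp^{-t},p_1^t,\ldots,p_s^t)\sg\,q(\ZZ_S^d)^r$ while you sum over the coarser $\Lambda_t(\sg)$, the identity should read $N(D,q)^d\,\mu(B)=\sum_{\ov\in\Lambda_t(\sg)}\mu\bigl(B\cap(\ov+F_t)\bigr)$, not $\mu(B)=\cdots$; your parenthetical ``multiply out the $N(D,q)^d$ index'' suggests you intended this, and the final exponent matches, so it is not a substantive gap.
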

\begin{proof}
\noindent (a)
Since $\Psi(q,D)$ is an additive subgroup and $(q\ZZ_S^d)^r \subseteq \Psi(q,D) \subseteq (\ZZ_S^d)^r$, $\Psi(q,D)$ is an $S$-lattice. Considering the projection $(\ZZ_S^d)^r\rightarrow (\ZZ_S^d/q\ZZ_S^d)^r$, it follows that the covolume of $\Psi(q,D)$ is
\[
\frac {\# (\ZZ_S^d/q\ZZ_S^d)^r}{\#\left\{(\ov_1, \ldots, \ov_r)\in (\ZZ_S^d/q\ZZ_S^d)^r : \frac 1 q (\ov_1, \ldots, \ov_r)D\in (\ZZ_S^d/q\ZZ_S^d)^k\right\}}=\frac {q^{dr}} {N(D,q)^d}
\]
by the definition of $N(D,q)$.

%\vspace{0.1in}
%%%%%%%%%%%%%%%%
%\noindent (b) Note that $(\ov_1, \ldots, \ov_r)\in \Psi(q,D)-\Phi(q,D)$ is linearly dependent.
% We may assume that $\ov_1$ is a linear combination of $\ov_2, \ldots, \ov_r$. 
%The number of possible choices of each $\ov_j$ ($2\le j\le r$) is at most $(1+2\pp^{2t})^d$.
%Since $\ov_1$ is an $S$-integral element lying in the intersection
%\[
%\QQ_S\text{-span of }\{\ov_2, \ldots, \ov_r\}
%\cap B_t,
%\]
%the number of possible $\ov_1$ is approximated by the $r'$-volume of the above region, where $r'$ is the rank of $\QQ_S$-span of $\{\ov_2, \ldots, \ov_r\}$, hence is bounded by $2\left((2\sqrt r)^tp^{2t}\right)^{r-1}$ for all sufficiently large $t$.

\vspace{0.1in}
%%%%%%%%%%%%%%
\noindent (b) We first consider the case when $S=\{\infty\}$ and $(\QQ_S^d)^r=(\RR^d)^r$.

Let $B_\infty\subseteq (\RR^d)^r$ be a bounded Borel set with piecewise smooth boundary for which $\mu_\infty(B_\infty)>0$, and let $\varepsilon>0$ be given. 
Let $\pp=\prod_{p\in S_f}p$.
It is well-known that for any lattice $\Lambda_\infty \subseteq (\RR^d)^r$, there is $t_0=t_0(B, \varepsilon, \Lambda_\infty)\in\NN$ such that 
\[
\left|\# \left(\pp^{-t}\Lambda_\infty \cap B_\infty\right)
-\pp^{tdr}\frac {\mu(B_\infty)}{\covol(\Lambda_\infty)} \right|<\varepsilon 
\]
for $t\ge t_0$. Note that (after fixing $B_\infty$ and $\varepsilon$,) $t_0$ can be taken associated with the diameter of the parallelopiped corresponding to $\Lambda_\infty$ in $(\RR^d)^r$ (see \cite{Sch1968} for example).

\vspace{0.1in}
%%%
Now consider a bounded Borel set $B=\prod_{p\in S} B_p\subseteq (\QQ_S^d)^r$, where $B_\infty$ has a smooth boundary and $B_p$ is locally constant for $p<\infty$. 

Recall that one can take a fundamental domain $\mathcal F$ of $\SG/\Gamma$ so that it is contained in $\mathcal F_\infty \times \prod_{p\in S_f}\UL_d(\QQ_p)$, where $\mathcal F_\infty$ is a fundamental domain for $\SL_d(\RR)/\SL_d(\ZZ)$.

Note that fundamental domains of $(\ZZ_S^d)^r$ and $(q\ZZ_S^d)^r$ are $([0,1)^d\times \prod_{p\in S_f} \ZZ_p^d)^r$ and $([0,q)^d \times \prod_{p\in S_f} \ZZ_p^d)^r$, respectively, since $q\in \NN_S$. 
Hence one can take a fundamental domain of $\Psi(q,D)$ by $\mathcal F'_\infty\times \prod_{p\in S_f} (\ZZ_p^d)^r$ for some Borel set $F'_\infty \subseteq ([0,q)^d)^r$ in $(\RR^d)^r$.

For any $S$-lattice of the form $\Lambda=\sg\Psi(q,D)$, where $\sg=(g_p)_{p\in S}\in \mathcal F$, 
since $g_p\ZZ_p^d=\ZZ_p^d$ for each $p\in S_f$,
we can take a fundamental domain for $(\QQ_S^d)^r/\Psi(q,D)$ as $g_\infty\mathcal F'_{\infty}\times \prod_{p\in S_f} (\ZZ_p^d)^r$. 

Hence for any given $\varepsilon>0$, there is $t_0=t_0(B, \varepsilon, \Lambda)\in \NN$ such that for $t\ge t_0$,
%%%
\begin{equation}\label{eq: pixel lemma}
\left|
\#\left((\pp^{-t}, p_1^t,\ldots, p_s^t)\Lambda \cap B \right)-\pp^{2tdr}\frac {\mu(B)}{q^{dr}/N(D,q)^d}\right|<\varepsilon
\end{equation}
and one can take $t_0\in \NN$ depending eventually only on the diameter of $\mathcal F'_\infty$.

%%%
Since the diameter of $\sg\mathcal F'_\infty$ is changed small in the small neighborhood in the space $\mathcal F.\Psi(q,D)$, for a given compact set $\mathcal K\subseteq \mathcal F$ and $\varepsilon>0$, one can take a uniform $t_0$ such that \eqref{eq: pixel lemma} holds for any $\Lambda \in \mathcal K.\Psi(q,D)$ and $t\ge t_0$. 
\end{proof}

%%%%%%%%%%%%%%%%%%%%%%%%%%%%%%%%%%%%
\begin{proof}[Proof of Theorem~\ref{middle main}]

For a continuous and bounded function $F:(\QQ_S^d)^r\rightarrow \RR_{\ge 0}$ of compact support, define the transform $\widehat F$ of $F$ by
\begin{equation}\label{widehat function}
\widehat F(\sg\Gamma)
:=\sum_{\scriptsize \begin{array}{c}
(\ow_1, \ldots, \ow_r)\\
\in \sg\Phi(q,D)\end{array}}
F(\ow_1, \ldots, \ow_r),\;\forall \sg\Gamma \in \SG/\Gamma.
\end{equation}

By putting
\begin{equation}\label{eq 2: middle main}
F(\ov_1, \ldots, \ov_r):=f\Big(\frac 1 q (\ov_1, \ldots, \ov_r)D\Big),
\end{equation}
it suffices to show that for any continuous bounded function $F:(\QQ_S^d)^r \rightarrow \RR_{\ge 0}$ of compact support,
\begin{equation}\label{eq 1: middle main}
\int_{\SG/\Gamma} \widehat F(\sg\Gamma) d\sg = \frac {N(D,q)^d} {q^{dr}}
\int_{\QQ_S^d} F(\ov_1, \ldots, \ov_r) d\ov_1\cdots d\ov_r.
\end{equation}

Set
\[\begin{split}
E&=\left\{(\ov_1, \ldots, \ov_r)\in (\QQ_S^d)^r:
\begin{array}{l}
\exists \:p\in S \text{ such that }\\[0.02in]
\hspace{0.2in}(\ov_1)_p, \ldots, (\ov_r)_p : \text{ linearly dependent}
\end{array}\right\}\\
&=\bigcup_{p\in S}
(\QQ_{S-\{p\}}^d)^r \times E_p,
\end{split}\]
where
\[
E_p=\bigcup_{i=1}^r\Big\{\Big((\ov_1)_p, \ldots, (\ov_{i-1})_p, 
\sum_{j\neq i} \beta_j(\ov_j)_p,
(\ov_{i+1})_p, \ldots, (\ov_r)_p\Big)\in (\QQ_p^d)^r:\beta_j \in \QQ_p\Big\}.
\]
%\[\begin{split}
%E_1&=\left\{(\ov_1, \ldots, \ov_r)\in (\QQ_S^d)^r: \ov_1, \ldots, \ov_r:\;\text{linearly dependent}\;\right\}\\
%&=\bigcup_{i=1}^r\left\{\Big(\ov_1, \ldots, \ov_{i-1}, 
%\sum_{j\neq i} \beta_j\ov_j,
%\ov_{i+1}, \ldots, \ov_r\Big):\beta_j \in \QQ_S\right\};\\
%E_2&=\text{the complement of }\Big(\prod_{p\in S}\left(\QQ_p^d-\{\origin\}\right)\Big)^r \text{ in }(\QQ_S^d)^r.
%\end{split}\]
%and $E=E_1 \cup E_2$.
Note that for each $p\in S$, $E_p$ is locally diffeomorphic to $\QQ_p^{d(r-1)+(r-1)}$ except the origin. Since we assume that $r\le k\le d-1$, $d(r-1)+(r-1)<dr$ so that $\mu_p(E_p)=0$. 
Here, $\mu_p=\mu_p^{dr}$ is the Haar measure of $(\QQ_p^d)^r\simeq \QQ_p^{dr}$ defined in Section~\ref{Statement of Results}. Since $\mu$ is the product measure of $\mu_p$, $\mu(E)=0$.
Moreover, it is easy to show that $\SG$ acts on $(\QQ_S^d)^r-E$ transitively.

\vspace{0.1in}
%%%%%%%%%%%%%%%%%%%%%%%%%%%%%%
\noindent\textbf{Step 1.}
We first show that $\psi:F\in C_c\left((\QQ_S^d)^r-E\right) \mapsto \int_{\SG/\Gamma} \widehat F d\sg$ is a $\SG$-invariant positive linear functional, which is obvious except $\int_{\SG/\Gamma} \widehat F d\sg<\infty$.

Since $F$ is bounded and compactly supported, there is a bounded function $f':\QQ_S^d\rightarrow \RR_{\ge 0}$ of compact support for which
$$F(\ov_1, \ldots, \ov_r)\le f'(\ov_1)\cdots f'(\ov_r)=:(f')^r(\ov_1, \ldots, \ov_r)$$
so that $\widehat F(\sg\Gamma) \le (\widetilde {f'}(\sg\Gamma))^r$. By Proposition~\ref{Schmidt}, there is a constant $c_{f'}>0$ such that $\widetilde {f'}<c_{f'}\alpha^{}_S$. Together with Proposition~\ref{bounded of alpha}, it holds that
\[
\int_{\SG/\Gamma} \widehat F d\sg
\le \int_{\SG/\Gamma} (\widetilde {f'})^r d\sg
\le  c_{f'}^r \int_{\SG/\Gamma}\alpha^{r}_S d\sg<\infty.
\]

By Rietz-Markov-Kakutani representation theorem, there is a unique Borel measure $\mu'$ for which $\int_{\SG/\Gamma} \widehat F d\sg=\int_{(\QQ_S^d)^r-E} F d\mu'$. 
Moreover, since $\psi$ is $\SG$-invariant, so is $\mu'$. Using the facts that $\SG$ acts on $(\QQ_S^d)^r-E$ transitively and $\mu(E)=0$, it follows that there is a constant $c>0$ such that
\[
\int_{\SG/\Gamma} \widehat F d\sg= c\int_{(\QQ_S^d)^r} F d\mu.
\]

\vspace{0.1in}
%%%%%%%%%%%%%%%%%%%%%%%%%%%%%%
\noindent\textbf{Step 2.} 
Let us now show that $c=N(D,q)^d/q^{dr}$.

It is known that for any $R>0$, 
\begin{equation}\label{eq 5: middle main}
\mathcal K_R:=\left\{\sg\Gamma\in \SG/\Gamma: \alpha^{}_S(\sg\Gamma)\le R\right\}
\end{equation}
is a compact set in $\SG/\Gamma$ and $\bigcup_{R>0} \mathcal K_R=\SG/\Gamma$. Moreover, since $\int_{\SG/\Gamma} \alpha^{}_S d\sg<\infty$, we have that $m(\mathcal K_R^c)\rightarrow 0$ as $R\rightarrow \infty$, where $\mathcal K_R^c=\SG/\Gamma -\mathcal K_R$:
\[\begin{split}
R\cdot m(\mathcal K_R^c)
&<R\int_{\mathcal K_R^c} 1 d\sg+\int_{\mathcal K_R} \alpha^{}_Sd\sg\\
&\le\int_{\mathcal K_R^c} \alpha^{}_S d\sg+\int_{\mathcal K_R} \alpha^{}_Sd\sg=\int_{\SG/\Gamma} \alpha^{}_S d\sg<\infty.
\end{split}\]

%%%
Let $\pp=\prod_{p\in S_f} p$. For each $t\in \NN$, consider the set
\[
B_t=\Big(B_{\pp^t}(\origin)\times \hspace{-0.05in}\prod_{p\in S_f} p^{-t}\ZZ_p^d\Big)^r - N_1(E),
\]
where $N_\delta(E)=\bigcup_{\ov\in E}(\ov+B_\delta)$, the $\delta$-neighborhood of $E$.
Take a sequence $\{F_t\}$ in $C_c\left((\QQ_S^d)^r-E\right)$ asymptotically approximating $\frac 1 {\mu(B_t)} I_{B_t}$ such that 
\[
0\le F_t \le \frac 1 {\mu(B_t)} I_{B_t},
\quad\text{and}\quad
F_t|_{B_{t-1}}=\frac 1 {\mu(B_t)} I_{B_t}|_{B_{t-1}}.
%\quad
%F_t|_{N_1(E)}\equiv 0.
\]
Then $\lim_{t\rightarrow \infty} \int_{(\QQ_S^d)^r} F_t d\mu=1$. 
%%%
Moreover, by Lemma~\ref{pixel lemma}, for any compact set $\mathcal K \subseteq \SG/\Gamma$ and $\varepsilon'>0$,
since $\Phi(q,D)\cap B_t=\Psi(q,D) \cap B_t$, there is $t_0=t_0(\varepsilon', \mathcal K)\in \NN$ so that for any $\Lambda\in \mathcal K$ and $t>t_0$,
\begin{equation}\label{eq 3: middle main}
\left|\widehat {F_t}(\Lambda) - \frac {N(D,q)^d} {q^{dr}} \right|<\varepsilon'.
\end{equation}
Here, we identify $\mathcal K$ with some compact set in a fundamental domain for $\SG/\Gamma$.

%%%%
By Proposition~\ref{Schmidt}, one can find $C>0$ such that for any $t\in \NN$,
\[
\int_{\SG/\Gamma} \widehat {F_t} d\sg
\le \int_{\SG/\Gamma} \sum_{\scriptsize \begin{array}{c}
(\ov_1, \ldots, \ov_r)\\
\in \sg(\ZZ_S^d)^r\end{array}}
\frac 1 {\mu(B_t)} I_{B_t} d\sg
<C\int_{\SG/\Gamma} \alpha^{r}_S d\sg.
\]

We want to show that
\begin{equation}\label{eq 4: middle main}
\lim_{t\rightarrow \infty}
\int_{\SG/\Gamma} \widehat F_t d\sg
%=\frac 1 {\covol(\Psi(q,D))} 
=\frac {N(D,q)^d} {q^{dr}}
=\lim_{t\rightarrow \infty} \frac {N(D,q)^d} {q^{dr}} \int_{(\QQ_S^d)^r} F_t d\mu.
\end{equation}

Let $\varepsilon>0$ be a given arbitrary small number.
For any $\mathcal K_R$,
\[\begin{split}
& \int_{\SG/\Gamma} \widehat {F_t} d\sg 
=\int_{\mathcal K_R} \widehat {F_t} d\sg+\int_{\mathcal K_R^c} \widehat {F_t} d\sg
\end{split}\]

By definition of $\mathcal K_R$ and Proposition~\ref{bounded of alpha}, since $r\le d-1$ and $\covol(\Psi(q,D))$ is at least $1$, one can find $R>0$ such that
\[\begin{split}
&\int_{\mathcal K_R^c} 
\left|\widehat {F_t}-\frac {N(D,q)^d} {q^{dr}}\right| d\sg
<C\int_{\mathcal K_R^c} \alpha^{r}_S d\sg + 
\frac {1-m(\mathcal K_R)} {q^{dr}/N(D,q)^d}\\
&\hspace{0.5in}\le \frac {C} {R^{1/2}} \int_{\mathcal K_R^c} \alpha^{r+1/2}_S d\sg+(1-m(\mathcal K_R))
<\frac {\varepsilon} 2.
\end{split}\]

Take $t_0=t_0(\mathcal K_R,\varepsilon/2)>0$ such that for $t>t_0$, \eqref{eq 3: middle main} holds for $\varepsilon'=\varepsilon/2$. Then it follows that
\[\begin{split}
\int_{\SG/\Gamma} \left|\widehat {F_t} - \frac {N(D,q)^d} {q^{dr}}\right| d\sg
&<\int_{\mathcal K_R} \left|\widehat {F_t} - \frac {N(D,q)^d} {q^{dr}}\right| d\sg+\int_{\mathcal K_R^c} \left|\widehat {F_t} - \frac {N(D,q)^d} {q^{dr}}\right| d\sg\\
&<\frac {\varepsilon} 2 + \frac {\varepsilon} 2 =\varepsilon.
\end{split}\]

Since $\varepsilon>0$ is an arbitrary small number, we obtain \eqref{eq 4: middle main}. 

\vspace{0.1in}
%%%%%%%%%%%%%%%%%%%%%%%%%%%%%%
\noindent\textbf{Step 3.}
Note that the function $F$ defined in \eqref{eq 2: middle main} is not a compactly supported function on $(\QQ_S^d)^r-E$.
Let us take a sequence $\{F'_t\}$ of functions in $C_c\left((\QQ_S^d)^r-E\right)$ for which
\[
0\le F'_t\le F
\quad\text{and}\quad
F'_t=F\;\text{on}\;(\QQ_S^d)^r-N_{-t}(E).
%\quad\text{and}\quad
%F'_t|_{N_{-t-1}}\equiv 0.
\]

Let $\mathcal K_R\subseteq \SG/\Gamma$ be as in \eqref{eq 5: middle main}. Then by Proposition~\ref{Schmidt}, since $F'_t$ is bounded by $F$, for any $t\in \NN$,
\[
\int_{\SG/\Gamma} \widehat {F} d\sg < C\int_{\SG/\Gamma} \alpha_S^r d\sg
\]
for some constant $C=C_F>0$.

Similar to the second step, from Proposition~\ref{bounded of alpha}, it follows that for a given $\varepsilon>0$, one can find $R>0$ such that for any $t\in \NN$,
\[ 
\int_{\mathcal K_R^c}
\widehat {F} d\sg < \frac C {R^{1/2}} \int_{\mathcal K_R^c} \alpha^{r+1/2} d\sg < \frac \varepsilon 3.
\]

Observe that since $\Phi(q,D)$ is an $S$-lattice not contained in $E$ and $F$ is compactly supported, for each $\sg\in \SG$, there is $t_0=t_0(\sg)\in \NN$ such that
\[
\sg\Phi(q,D)\cap N_{-t_0}(E)=\emptyset\;\text{on}\;\supp F,
\]
so that $\widehat {F'_t}(\sg\Lambda)=\widehat {F}(\sg\Lambda)$ for all $t>t_0$.
 
Using the compactness of $\supp F$ again, for each $\sg\Gamma$, one can find a small neighborhood $\mathcal N$ of $\sg\Gamma$ so that for $\sg'\Gamma\in \mathcal N$, $\sg'\Phi(q,D)\cap N_{-t_0-1}(E)=\emptyset$ on the support of $F$.
Hence one can take a uniform $t_0=t_0(\mathcal K_R)>0$ so that for any $\sg\Gamma \in \mathcal K_R$ and $t>t_0$, 
\[
\widehat {F'_t}(\sg\Gamma)=\widehat F(\sg\Gamma).
\]

Since $\int_{(\QQ_S^d)^r} F'_t d\mu\rightarrow \int_{(\QQ_S^d)^r} F d\mu$ as $t\rightarrow \infty$, it holds that for all sufficiently large $t\in \NN$, 
\[\begin{split}
&\left|\int_{\SG/\Gamma} \widehat F d\sg -\frac {N(D,q)^d} {q^{dr}}\int_{(\QQ_S^d)^r} F d\mu\right|\\
&\hspace{0.5in}\le\left|\int_{\SG/\Gamma} \widehat {F'_t}
 d\sg - \frac {N(D,q)^d}{q^{dr}} \int_{(\QQ_S^d)^r} F'_t d\mu \right|
+2\left|\int_{\mathcal K_R^c} \widehat F d\sg\right|\\
&\hspace{1.5in}+\frac {N(D,q)^d}{q^{dr}}\left|\int_{(\QQ_S^d)^r} F'_t d\mu - \int_{(\QQ_S^d)^r} F d\mu \right|<\varepsilon.
\end{split}\]
\end{proof}

%%%%%%%%%%%%%%%%%%%%%%%%%%%%%%%%%%%%%%%%%%%%%%%%%%%%%%%%%
\begin{proof}[Proof of Corollary \ref{Corollary L2 norm}]
By Theorem~\ref{MV Theorem 4} and Remark~\ref{sum over nonzero vectors},
\[\begin{split}
\int_{\SG/\Gamma} {\widetilde f\;}^2 d\sg
=\left(\int_{(\QQ_S^d)^2} f(\ov)d\ov\right)^2
\hspace{-0.05in}+\sum_{q\in \NN_S}\sum_{D\in \mathfrak D'_{1,q}}
\frac {N(D,q)^d} {q^d} \int_{\QQ_S^d} f'\times f'\big(\frac 1 q \ov D) d\ov.
\end{split}\]

The result follows from the fact that \textcolor{black}{the set of subscripts of sums} in the above equation is assorted into
%\begin{center}
%$\left\{\frac 1 q D : q\in \NN_S,\; D\in \mathfrak D_{1,q} \right\}
%=\left\{(1, 0)\right\}\cup \left\{(0,1)\right\}\cup\left\{(1,z) : 0\neq z\in \QQ\right\}.$%-\{0\}\right\}.$
%\end{center}
%
%In the last case, denote $z=w/(q\pp)$, where $q\in \NN_S$, $\pp\in \PP_S$ and $w\in \ZZ$ with $\gcd(q\pp, w)=1$.
\[
\left\{\frac 1 q D : q\in \NN_S,\; D\in \mathfrak D'_{1,q} \right\}
=\left\{(q,\frac w \pp) : q\in \NN_S,\; w\in \ZZ-\{0\},\; \gcd(q\pp, w)=1\right\}.\]%-\{0\}\right\}.$
\end{proof}
%%%%%%%%%%%%%%%%%%%%%%%%%%%%%%%%%%%%%%%%%%%%%%%%%%%%%%%%%

% Counterexample S-arithmetic %%%%%%%%%%%%%%%%%%%%%%%%%%%%%
\section{Absence of the uniform bound of the variance of the Siegel transform}

The following inequality is one of the consequences of Rogers' moment theorems together with the comparison of higher moments between the Siegel transform of a function $f$ and that of its \emph{spherical symmetrization} (see \cite{Rogers56-2} for the definition and the statement (Theorem 1) and also \cite{AM09, AM18} for details):
there is a constant $C_d>0$, depending only on the dimension $d$, such that for any bounded measurable set $A \subseteq \RR^d$, it holds that
\[
\int_{\SL_d(\RR)/\SL_d(\ZZ)} \widetilde{I_A}^2 dg \le \mu(A)^2 +C_d\:\mu(A),
\] 
or equivalently, $\int \left(\widetilde{I_A}-\mu(A)\right)^2 dg \le C_d\:\mu(A)$.
%Here, in the case when $f=I_{A}$ for some bounded set $A\subseteq \RR^d$, the indicator function of the ball whose volume is equal to that of $A$. 
%The above inequality derives the property which is significantly used in the application to problems of geometry of numbers (see Theorem \ref{LLUF Theorem 2.2} and Section \ref{Application}).

However, when $S\neq \{\infty\}$, such a uniform constant $C_d$ does not exist.
More precisely, one can show the following.

%%%%%%%%%%%%%%%%%%%
\begin{proposition}\label{Proposition 5.1}
Put $S=\{\infty, p\}$ for some odd prime $p$. There is a sequence $\{A_k\}_{k\in \NN}$ of positive measurable sets $A_k \subseteq \QQ_S^d$, $d\ge 3$, such that $\mu(A_k)\rightarrow \infty$ as $k\rightarrow \infty$ and there is a constant $c>0$ such that
\[
\int_{\SG/\Gamma} \widetilde{I_{A_k}}^2 d\sg - \mu(A_k)^2 \ge c\mu(A_k)^2.
\]
\end{proposition}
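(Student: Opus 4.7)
\emph{Strategy.} The plan is to build $A_k$ as a disjoint union of many ``layers'' linked by $p$-adic scaling. Because $|p^i|_\infty^d \cdot |p^i|_p^d = 1$, multiplication by $p^i \in \QQ$ preserves the Haar measure on $\QQ_S^d$, so a stack of $p^i$-scaled translates of a fixed base set can all have the same measure while remaining pairwise disjoint. For such an $A_k$, the dilates $p^m A_k$ continue to overlap substantially with $A_k$ for many $m$, which forces the $\pp = p^m$ contributions in Corollary~\ref{Corollary L2 norm} to grow quadratically in the number of layers.

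\emph{Construction.} I would set
\[
A_0 \;=\; \{x \in \RR^d : 1 \le \|x\|_\infty \le 2\} \times \ZZ_p^d,
\qquad
A_k \;=\; \bigsqcup_{i=0}^{k-1} p^i A_0.
\]
Since $p$ is an odd prime, $2 p^i < p^{i+1}$, so the annular real factors of distinct layers $p^i A_0$ are pairwise disjoint and the union is genuine. Each layer has measure $\mu(A_0) = 2^d(2^d-1)$, because the real inflation by $p^{id}$ cancels the $p$-adic deflation $\mu_p(p^i \ZZ_p^d) = p^{-id}$. Hence $\mu(A_k) = k\,\mu(A_0) \to \infty$.

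\emph{Computation.} Apply Corollary~\ref{Corollary L2 norm} to $f = I_{A_k}$ and keep only the summands with $q = 1$, $w = 1$, $\pp = p^m$ (the coprimality $\gcd(p^m,1) = 1$ is automatic). Each such term equals $\mu(A_k \cap p^m A_k)$. From $p^m A_k = \bigsqcup_{j=m}^{m+k-1} p^j A_0$ and the disjointness of the layers, $A_k \cap p^m A_k = \bigsqcup_{j=m}^{k-1} p^j A_0$ for $0 \le m \le k-1$, with measure $(k-m)\mu(A_0)$. Summing these $k$ non-negative contributions and discarding the remaining non-negative cross terms,
\[
\int_{\SG/\Gamma} \widetilde{I_{A_k}}^2\,d\sg \;-\; \mu(A_k)^2 \;\ge\; \mu(A_0)\sum_{m=0}^{k-1}(k-m) \;=\; \frac{k(k+1)}{2}\,\mu(A_0) \;\ge\; \frac{1}{2\mu(A_0)}\,\mu(A_k)^2,
\]
which yields the proposition with $c = 1/\bigl(2\cdot 2^d(2^d-1)\bigr)$.

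\emph{Main obstacle.} The only nontrivial step is identifying the right construction. Natural guesses such as $A_k = B_\infty \times p^{-k}\ZZ_p^d$ produce variance only $O(\mu(A_k))$, in line with Rogers' real-case bound; the $p$-adic growth of the set is ``absorbed'' by the Siegel formula. The key insight is to stack many \emph{constant-mass} layers linked by multiplication by $p$, so that $p^m A_k$ still contains $k-m$ of the original layers — converting the linear (in $k$) measure of $A_k$ into a quadratic cross-sum contribution. The annular (rather than ball-shaped) real factor together with the hypothesis that $p$ is odd are precisely what is needed to keep the layers pairwise disjoint.
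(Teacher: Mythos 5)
Your proof is correct, and the mechanism is the same as the paper's — namely, that multiplication by $p^m\in\PP_S$ preserves Haar measure on $\QQ_S^d$, so the $\pp$-indexed terms in Corollary~\ref{Corollary L2 norm} need not decay, and one can force $\Theta(k)$ of them to each contribute $\Theta(k)$. Your concrete construction and bookkeeping, however, differ from the paper's in a way worth noting. The paper takes $A_k$ to be the ``hyperbolic'' region $\{\|\ov_\infty\|_\infty\|\ov_p\|_p\le 1,\ \|\ov_\infty\|_\infty\le p^k,\ \|\ov_p\|_p\le p^k\}$, which it then slices into $2k+1$ dyadic pieces to estimate $\mu(A_k)$, and it lower-bounds the variance by retaining terms with $q\ge 2$, $1\le w<q$, $\pp=p^m$ for $0\le m\le 2k$ (computing $\mu(A_k(q)\cap A_k(w/\pp))$ term by term). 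You instead build $A_k$ directly as a disjoint union of $k$ measure-$\mu(A_0)$ layers $p^iA_0$ and keep only the diagonal terms $q=w=1$, $\pp=p^m$, $0\le m\le k-1$, for which the overlap $\mu(A_k\cap p^mA_k)=(k-m)\mu(A_0)$ is immediate from disjointness of the annular layers (here $p\ge 3$ is used, exactly as in the paper, to keep the layers separated). The net effect is the same lower bound $\int\widetilde{I_{A_k}}^2\,d\sg-\mu(A_k)^2\gtrsim k^2\gtrsim\mu(A_k)^2$, but your accounting avoids both the multi-part measure decomposition of the paper's $A_k$ and the sum over $q\ge 2$ and $w$; the price is a slightly more contrived (non-convex, non-star-shaped) set $A_k$, which is harmless since the proposition only asks for \emph{some} sequence of measurable sets. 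Both proofs are valid; yours is arguably the more economical of the two.
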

%%%%%%%%%%%%%%%%%
\begin{proof}
For each $k\in \NN$, define $A_k \subseteq \QQ_S^d$ by
\[\begin{split}
A_k&=\left\{
(\ov_\infty, \ov_p) \in \RR^d \times \QQ_p^d : \begin{array}{c}
\|\ov_\infty\|_\infty \|\ov_p \|_p \le 1, \\[0.02in]
\|\ov_\infty\|_\infty \le p^k\;\text{and}\; \|\ov_p\|_p \le p^k
\end{array}
\right\}\\
&=\bigsqcup_{m=0}^{2k-1}\left\{ \ov_p \in \QQ_p^d : \|\ov_p\|_p = p^{k-m} \right\}\times \left\{\ov_\infty \in \RR^d : \|\ov_\infty\|_\infty \le p^{-(k-m)} \right\}\\
&\hspace{0.1in}\sqcup \left\{\ov_p \in \QQ_p^d : \|\ov_p\|_p \le p^{-k} \right\}\times \left\{\ov_\infty : \|\ov_\infty\|_\infty \le p^{k}\right\}.
\end{split}\]

Since $\mu_p\left(\{\ov_p:\|\ov_p\|_p=p^j\}\right)=\mu_p\left(p^{-j}\ZZ_p^d- p^{-j+1}\ZZ_p^d\right)=p^{jd}-p^{(j-1)d}$, the volume of each of the first $2k$ sets in the disjoint union is
\[\begin{split}
&\mu\left(\left\{ \ov_p \in \QQ_p^d : \|\ov_p\|_p = p^{k-m} \right\}\times \left\{\ov_\infty \in \RR^d : \|\ov_\infty\|_\infty \le p^{-(k-m)} \right\}\right)\\
&\hspace{0.4in}=\left(1-\frac 1 {p^d}\right)p^{d(k-m)}\times B_d \frac 1 {p^{d(k-m)}}= B_d\left(1-\frac 1 {p^d}\right),
\end{split}\]
where $B_d$ is the volume of the unit ball of $\RR^d$. And the volume of the last set is
$$\mu\left(\left\{\ov_p \in \QQ_p^d : \|\ov_p\|_p \le p^{-k} \right\}\times \left\{\ov_\infty : \|\ov_\infty\|_\infty \le p^{k}\right\}\right)%=p^{-kd}\times B_d \:p^{kd}
=B_d.$$

Hence the volume $\mu(A_k)$ is
\[\begin{split}
\mu(A_k)
%&=\sum_{m=0}^{2k-1} \left(1-\frac 1 {p^d}\right)p^{d(k-m)}\times B_d \frac 1 {p^{d(k-m)}}+ p^{-dk}\times B_d \frac 1 {p^{-dk}}\\
&=2k\times B_d\left(1-\frac 1 {p^d}\right) + B_d,
\end{split}\]
and we have
\begin{equation}\label{prop 5.1 eq 1}
(2k+1)\left(1-\frac 1 {p^d}\right)B_d \le \mu(A_k) \le (2k+1)B_d.
\end{equation}

For $A\subseteq \QQ_S^d$ and $q \in \QQ$, denote by $A(q)=\{\ov \in \QQ_S^d : q\ov \in A\}$.
% the support of the function $I_A(q\:\cdot\:):\QQ_S^d \rightarrow \QQ_S^d$. 
By Corollary \ref{Corollary L2 norm},
\[
\int_{\SG/\Gamma} \widetilde{I_{A_k}}^2 d\sg - \mu(A_k)^2
=\sum_{q\in \NN_S}\sum_{\pp \in \PP_S} \hspace{-0.12in}\sum_{\scriptsize \begin{array}{c}w\in \ZZ-\{0\}\\ \gcd(q\pp, w)=1 \end{array}} \mu\left(A_k(q)\cap A_k\left(\frac w {\pp}\right)\right).
\]

For the lower bound, we count only the cases when $q\in \NN_S$, $\pp=p^m$, $0\le m \le 2k$ and $1\le w<q$. In this case, the intersection is
\[
A_k(q)\cap A_k\Big(\frac w {\pp}\Big)
=\left\{
(\ov_\infty, \ov_p) : \begin{array}{c}
\|\ov_\infty\|_\infty \|\ov_p\|_p \le \frac 1 q,\\
\|\ov_\infty\|_\infty \le \frac {p^k} q \text{ and } \|\ov_p\|_p\le p^{k-m}
\end{array}\right\},
\]
so that its volume is:
\[
\mu(A_k(q)\cap A_k\Big(\frac w {\pp}\Big))
=\frac {\mu(A_k)}{q^d}-m \times \left(1-\frac 1 {p^d}\right) \frac {B_d} {q^d}
\]
since $\mu(A(q))=\mu(A)/q^d$ for $q\in \NN_S$. Hence
\[\begin{split}
\int_{\SG/\Gamma} \widetilde{I_{A_k}}^2 d\sg - \mu(A_k)^2
%&=\sum_{q\in \NN_S}\sum_{\pp \in \PP_S} \hspace{-0.12in}\sum_{\scriptsize \begin{array}{c}w\in \ZZ-\{0\}\\ \gcd(q\pp, w)=1 \end{array}} \mu\left(A_k(q)\cap A_k\left(\frac w {\pp}\right)\right)\\
&\ge %\hspace{-0.12in}
\sum
%_{\scriptsize \begin{array}{c}
%q, w \in \NN_S\\
%\gcd(q,w)=1\\
%q>w \end{array}}\hspace{-0.12in}\sum_{m=0}^{2k} 
\left(\frac {\mu(A_k)}{q^d}-m\times\left(1- \frac 1 {p^d}\right)\frac {B_d} {q^d} \right)\\
&=
%\hspace{-0.12in}
\sum
%_{\scriptsize \begin{array}{c}
%q, w \in \NN_S\\
%\gcd(q,w)=1\\
%q>w \end{array}}\hspace{-0.12in} 
(2k+1)\frac {\mu(A_k)} {q^d} - \frac 1 2\cdot 2k (2k+1)\left(1- \frac 1 {p^d}\right)\frac {B_d} {q^d} \\
&\overset{\eqref{prop 5.1 eq 1}}{\ge} \sum
%_{\scriptsize \begin{array}{c}
%q, w \in \NN_S\\
%\gcd(q,w)=1\\
%q>w \end{array}}\hspace{-0.12in} 
(2k+1)\frac {\mu(A_k)} {q^d} - k \:\frac {\mu(A_k)} {q^d}\\
&\ge k\:\mu(A_k)
%\hspace{-0.12in}
\sum
%_{\scriptsize \begin{array}{c}
%q, w \in \NN_S\\
%\gcd(q,w)=1\\
%q>w \end{array}}\hspace{-0.12in} 
\frac 1 {q^d},
\end{split}\]
where the summation is over $q, w\in \NN_S$ for which $\gcd(q,w)=1$ and $q>w$ and $\pp=p^m$, $0\le m \le 2k$. Note that 
%where summation is over \textcolore{red}{$q, w\in \NN_S$, ...} 
%
%Since $d\ge 3$, 
\[
0< \hspace{-0.12in}\sum_{\scriptsize \begin{array}{c}
q, w \in \NN_S\\
\gcd(q,w)=1\\
q>w \end{array}}\hspace{-0.12in} \frac 1 {q^d}
\le \sum_{q\in \NN} \frac {\phi(q)}{q^d} = \frac {\zeta(d-1)}{\zeta(d)} <\infty,
\]
where $\phi(q)$ is the Euler totient function and $\zeta(d)$ is the Riemann-zeta function. 
Since $k$ is bounded below by some scalar multiple of $\mu(A_k)$ by  \eqref{prop 5.1 eq 1}, one can find a constant $c>0$ such that
\[\int_{\SG/\Gamma} \widetilde{I_{A_k}}^2 d\sg - \mu(A_k)^2 > c\mu(A_k)^2.
\]
\end{proof}
%%%%%%%%%%%

For the application in Section \ref{Application}, we will show that for certain families $\mathfrak F$ of measurable sets $A\subseteq \QQ_S^d$ with $\mu(A)\in (0,\infty)$, there is a constant $C_{\mathfrak F}>0$ for which the following holds:
\begin{equation}\label{class F}
\int_{\SG/\Gamma} \widetilde{I_A}^2 d\sg \le \mu(A)^2 + C_{\mathfrak F}\:\mu(A), \; \forall A\in \mathfrak F.
\end{equation}

Recall that for $A \subseteq \QQ_S^d$, we denote by $\T A$ the dilate of $A$ by $\T=(T_p)_{p\in S}$, which is given by
\[
\T A=\left\{(v_p)_{p\in S} \in \QQ_S^d : 
\Big(\frac {v_\infty} {T_\infty}, T_{p_1}v_{p_1}, \ldots, T_{p_s}v_{p_s} \Big) \in A \right\}.
\]
%%%%%%%%%%%%%%%%%%%%%%%%%%%%%%%%%%%%%%%
\begin{proposition}\label{Prop class F}
\begin{enumerate}[(a)]
\item For $A_0\subseteq \QQ_S^d$ with $0<\mu(A_0)<\infty$, let
\[
\mathfrak F_{A_0}=\Big\{ \T A_0 : \T \in \RR_{>0}\times \prod_{p\in S_f} p^{\ZZ} \Big\}.
\]
Then there is $C_{\mathfrak F_{A_0}}>0$ satisfying \eqref{class F}.
%%%%%
\item Define
\[
\mathfrak F_{prod}:=
\Big\{ \prod_{p \in S} A_p : A_p \subseteq \QQ_p^d \text{ with }\mu_p(A_p)<\infty, \; p\in S
\Big\}.
\]
Then there is $C_{\mathfrak F_{prod}}>0$ satisfying \eqref{class F}.
\item If there is a family $\mathfrak F$ with the property \eqref{class F},
then $$\mathfrak F \cup  \left\{\T_2 A - \T_1 A : A \in \mathfrak F,\; 0\preceq \T_1 \prec \T_2\right\}$$ also satisfies \eqref{class F} with the same constant $C_\mathfrak F>0$.
\end{enumerate}
\end{proposition}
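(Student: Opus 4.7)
The plan is to apply Corollary~\ref{Corollary L2 norm} to rewrite, for any bounded measurable $A \subseteq \QQ_S^d$,
\[
E(A) := \int_{\SG/\Gamma}\widetilde{I_A}^{\,2}\,d\sg - \mu(A)^2 = \sum_{q,\pp,w} \mu\bigl(A(q) \cap A(w/\pp)\bigr),
\]
where $A(\alpha):=\{\ov:\alpha\ov\in A\}$ and the sum ranges over $q\in\NN_S$, $\pp\in\PP_S$, $w\in\ZZ\setminus\{0\}$ with $\gcd(q\pp,w)=1$, so that condition~\eqref{class F} becomes $E(A)\le C_\mathfrak F\,\mu(A)$. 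Each of (a), (b), (c) then reduces to bounding $E(A)$ for the corresponding family.

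For (a), I would change variables via $\ov=\T\ou$. Because the scalars $q$ and $w/\pp$ in $\QQ_S^d$ commute with the diagonal action of $\T$, one has $(\T A_0)(\alpha)=\T\bigl(A_0(\alpha)\bigr)$, and hence $\mu\bigl((\T A_0)(q)\cap(\T A_0)(w/\pp)\bigr)=|\T|^d\,\mu\bigl(A_0(q)\cap A_0(w/\pp)\bigr)$. Summing term-by-term gives $E(\T A_0)=|\T|^d\,E(A_0)$, while $\mu(\T A_0)=|\T|^d\mu(A_0)$, so the constant $C_{\mathfrak F_{A_0}}:=E(A_0)/\mu(A_0)$ works uniformly in $\T$. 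Finiteness of $E(A_0)$ reduces to the $L^2$-integrability of $\widetilde{I_{A_0}}$, which follows by combining Proposition~\ref{Schmidt} (pointwise bound $\widetilde{I_{A_0}}\lesssim_{A_0}\alpha^{}_S$) with Proposition~\ref{bounded of alpha} ($\alpha^{}_S\in L^2(\SG/\Gamma)$ for $d\ge 3$).

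For (b), the product structure of $A=\prod_{p\in S} A_p$ forces $A(q)\cap A(w/\pp)=\prod_p\bigl(A_p(q)\cap A_p(w/\pp)\bigr)$, so the measure factors over $p$. At each finite place $p_i\in S_f$, multiplication by $q$ preserves $p_i$-volume, while multiplication by $w/\pp$ has $|w/\pp|_{p_i}=p_i^{m_i}$ and therefore contracts by $p_i^{-m_i d}$; taking the minimum of the two volumes gives a factor $p_i^{-m_i d}\mu_{p_i}(A_{p_i})$. At the infinite place the obvious bound is $\min\bigl(q^{-d},(\pp/|w|)^d\bigr)\mu_\infty(A_\infty)$. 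Multiplying yields
\[
\mu\bigl(A(q)\cap A(w/\pp)\bigr)\le\min\bigl\{(q\pp)^{-d},\,|w|^{-d}\bigr\}\,\mu(A).
\]
Since $(q,\pp)\mapsto q\pp$ bijects $\NN_S\times\PP_S$ with $\NN$, the double sum collapses to $\sum_{n\ge1}\sum_{w\in\ZZ\setminus\{0\},\,\gcd(n,w)=1}\min(n^{-d},|w|^{-d})$; splitting at $|w|=n$ bounds the two pieces by $\sum_n n^{-d}\phi(n)\ll\zeta(d-1)$ and $\sum_n n^{-(d-1)}$ respectively, both convergent for $d\ge 3$, so one gets a uniform constant $C_{\mathfrak F_{prod}}$ depending only on $d$.

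For (c), assume $\T_1A\subseteq\T_2A$ and set $B=\T_2A\setminus\T_1A$, so $I_B=I_{\T_2A}-I_{\T_1A}$ and $\widetilde{I_B}=\widetilde{I_{\T_2A}}-\widetilde{I_{\T_1A}}\ge 0$. Expanding the square and integrating gives
\[
E(B)=E(\T_2A)-2\,E(\T_2A,\T_1A)+E(\T_1A),
\]
where $E(X,Y):=\sum_{q,\pp,w}\mu\bigl(X(q)\cap Y(w/\pp)\bigr)$ is the bilinear version of $E(\cdot)$, obtained from the cross second-moment formula one derives from Theorem~\ref{MV Theorem 4} applied to $f(\ov_1,\ov_2)=I_X(\ov_1)I_Y(\ov_2)$. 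The elementary inclusion $(\T_1A)(q)\cap(\T_1A)(w/\pp)\subseteq(\T_2A)(q)\cap(\T_1A)(w/\pp)$ gives $E(\T_2A,\T_1A)\ge E(\T_1A)$, and hence $E(B)\le E(\T_2A)-E(\T_1A)$. The main obstacle is then to show that $E(\T_2A)-E(\T_1A)\le C_\mathfrak F\bigl(\mu(\T_2A)-\mu(\T_1A)\bigr)$, i.e.\ that the excess in \eqref{class F} is sub-additive under taking annular differences. For $\mathfrak F=\mathfrak F_{A_0}$ this is immediate from the \emph{equality} $E(\T A_0)=C_{\mathfrak F_{A_0}}\mu(\T A_0)$ established in (a); for $\mathfrak F=\mathfrak F_{prod}$ it follows from the term-wise bound of (b) applied separately to the two dilates. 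Propagating this additive control through the difference is the step I expect to be the main subtlety of the proof.
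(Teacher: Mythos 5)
Parts (a) and (b) of your proposal match the paper's argument essentially step for step: for (a), the change of variables $\ov=\T\ou$ in each term of the error sum shows $E(\T A_0)=|\T|^d E(A_0)$, and for (b), the pointwise bound $\mu\bigl(A(q)\cap A(w/\pp)\bigr)\le \max(q\pp,|w|)^{-d}\mu(A)$ summed over $q\pp\in\NN$, $w$ gives a constant $4\zeta(d-1)/\zeta(d)$; both are exactly what the paper does. Your reduction in (c), showing $E(B)\le E(\T_2A)-E(\T_1A)$, is also sound — the paper proves the same inequality more directly by the term-wise set inclusion $B(q)\cap B(w/\pp)\subseteq \bigl(\T_2A(q)\cap\T_2A(w/\pp)\bigr)\setminus\bigl(\T_1A(q)\cap\T_1A(w/\pp)\bigr)$, but your bilinear expansion gives the same bound.

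The gap you flag at the end of (c) is, however, real, and your proposed patch for $\mathfrak F_{prod}$ does not work: knowing $E(\T_2A)\le C_\mathfrak F\,\mu(\T_2A)$ and $E(\T_1A)\le C_\mathfrak F\,\mu(\T_1A)$ term-wise from (b) does not give $E(\T_2A)-E(\T_1A)\le C_\mathfrak F\bigl(\mu(\T_2A)-\mu(\T_1A)\bigr)$; you cannot subtract two one-sided inequalities in that direction, and $\T_2A\setminus\T_1A$ is not a product set, so (b) cannot be applied to $B$ directly. The resolution — which you already have in hand for $\mathfrak F_{A_0}$ but do not realize is universal — is to work with the \emph{exact} per-set constant. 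For each $A\in\mathfrak F$, set $C_A:=E(A)/\mu(A)$, so that $E(A)=C_A\,\mu(A)$ with equality; the hypothesis \eqref{class F} gives $C_A\le C_\mathfrak F$. Your own scaling computation from (a) shows $E(\T A)=|\T|^d E(A)$ and $\mu(\T A)=|\T|^d\mu(A)$ for \emph{any} set $A$, hence $E(\T A)=C_A\,\mu(\T A)$ with the \emph{same} constant $C_A$ for every dilation. Therefore
\[
E(\T_2A)-E(\T_1A)=C_A\bigl(\mu(\T_2A)-\mu(\T_1A)\bigr)=C_A\,\mu(B)\le C_\mathfrak F\,\mu(B),
\]
and combining with $E(B)\le E(\T_2A)-E(\T_1A)$ closes the argument for an arbitrary family $\mathfrak F$, not just $\mathfrak F_{A_0}$. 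This is precisely the paper's proof of (c).
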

%%%%%%%%%%%%%%%%%

We remark that if $S\neq \{\infty\}$, one cannot extend Propsition \ref{Prop class F} (c) as follow: if there is a family $\mathfrak F$ with the property \eqref{class F}, then $\mathfrak F \cup \{B - A : A \subseteq B \in \mathfrak F\}$ have the same property. Note that this property is one of key steps in the proof of Schmidt's theorem (\cite[Theorem 1]{Sch60}) so that one can generalize only the special case of Schmidt's theorem to the $S$-arithmetic case (Theorem \ref{AM Thm 1.3}).

%%%%%%%%%%%%%
\begin{proof}
\noindent (a) Choose any $C>0$ for which
\begin{equation}\label{prop 5.2 eq 1}
\int_{\SG/\Gamma} \widetilde{I_{A_0}}^2 d\sg \le \mu(A_0)^2 +C\:\mu(A_0).
\end{equation}

We claim that the proposition holds by taking $C_{\mathfrak F}=C$ for $\mathfrak F=\mathfrak F_{A_0}$.
According to Corollary \ref{Corollary L2 norm}, it suffices to show that 
\[
\sum_{q\in \NN_S}\sum_{\pp \in \PP_S} \hspace{-0.12in}\sum_{\scriptsize \begin{array}{c}w\in \ZZ-\{0\}\\ \gcd(q\pp, w)=1 \end{array}}\int_{\QQ_S^d} I_{\T A_0} (q\ov) I_{\T A_0} \Big(\frac w {\pp} \ov\Big) d\ov
\le C\mu(\T A_0).
\]

By the change of variables $\ow=(\ov_\infty/T_\infty, T_{p_1}\ov_{p_1}, \ldots, T_{p_s}\ov_{p_s})$ for $\T=(T_p)_{p\in S}$, 
\[\begin{split}
&\int_{\QQ_S^d} I_{\T A_0} (q\ov) I_{\T A_0} \Big(\frac w {\pp} \ov\Big) d\ov\\
&=\int_{\QQ_S^d} I_{\T A_0} (q(T_\infty, T^{-1}_{p_1},\ldots, T^{-1}_{p_s})\ow) I_{\T A_0} \Big(\frac w {\pp} (T_\infty, T^{-1}_{p_1},\ldots, T^{-1}_{p_s})\ow\Big) \:|\T|^d d\ow\\
&=|\T|^d \int_{\QQ_S^d} I_{A_0} (q\ow) I_{A_0}\Big(\frac w {\pp} \ow\Big) d\ow.
\end{split}\] 

Hence
\[\begin{split}
&\sum_{q\in \NN_S}\sum_{\pp \in \PP_S} \hspace{-0.12in}\sum_{\scriptsize \begin{array}{c}w\in \ZZ-\{0\}\\ \gcd(q\pp, w)=1 \end{array}}\int_{\QQ_S^d} I_{\T A_0} (q\ov) I_{\T A_0} \Big(\frac w {\pp} \ov\Big) d\ov\\
&\hspace{0.4in}=|\T|^d\sum_{q\in \NN_S}\sum_{\pp \in \PP_S} \hspace{-0.12in}\sum_{\scriptsize \begin{array}{c}w\in \ZZ-\{0\}\\ \gcd(q\pp, w)=1 \end{array}} \int_{\QQ_S^d} I_{A_0} (q\ow) I_{A_0}\Big(\frac w {\pp} \ow\Big) d\ow\\
&\hspace{0.4in}\le |\T|^d C\:\mu(A_0)= C \:\mu(\T A_0),
\end{split}\]
where the last inequality follows from \eqref{prop 5.2 eq 1} and Corollary \ref{Corollary L2 norm}.

\vspace{0.2in}
%%%%%
\noindent (b) Now, consider $A=\prod_{p\in S} A_p\in \mathfrak F_{prod}$, where $A_p \subseteq \QQ_p^d$, $p\in S$, has a positive measure.
Similar to (a), we need to find a uniform constant $C>0$ such that
\[
\sum_{q\in \NN_S}\sum_{\pp \in \PP_S} \hspace{-0.12in}\sum_{\scriptsize \begin{array}{c}w\in \ZZ-\{0\}\\ \gcd(q\pp, w)=1 \end{array}}\int_{\QQ_S^d} I_{A} (q\ov) I_{A} \Big(\frac w {\pp} \ov\Big) d\ov
\le C\:\mu(A).
\]

Note that $A(q)\cap A\Big(\frac w {\pp}\Big)= \prod_{p\in S_f} A_p(q) \cap A_p \Big(\frac w {\pp} \Big)$ and
\[\begin{split}
\mu_\infty(A_\infty(q)\cap A_\infty\Big(\frac w {\pp}\Big))&\le\left\{
\begin{array}{cl}
\frac 1 {q^d}\: \mu_\infty(A_\infty), & \text{if } q>\frac w {\pp};\\
\frac 1 {|w/\pp|^d}\: \mu_\infty(A_\infty), & \text{if } q< \frac w {\pp}.
\end{array}\right.\\[0.12in]
%%%%%
\mu_p(A_p(q)\cap A_p\Big(\frac w {\pp}\Big))&\le\left\{
\begin{array}{cl}
|\pp|^d_p\: \mu_p(A_p), & \text{if } p\:|\:\pp;\\
\mu_p(A_p), & \text{otherwise.}
\end{array}\right.
\end{split}\]

As in the proof of Proposition \ref{Proposition 5.1}, for $A\subseteq \QQ_S^d$ and $q\in \QQ$, set $A(q)=\left\{\ov\in \QQ_S^d : q\ov \in A\right\}$.
It follows that 
\[\begin{split}
\mu(A(q)\cap A\Big(\frac w {\pp}\Big))
&\le\frac 1 {\max(q, |w/\pp|)^d}\:\mu_\infty(A_\infty)\times 
\prod_{p\in S_f}|\pp|_p^d \:\mu_p(A_p)\\
&=\frac 1 {\max(q, |w/\pp|)^d}\:\mu_\infty(A_\infty)\times 
\pp^{-d}\prod_{p\in S_f}\mu_p(A_p)\\
&=\frac 1 {\max(q\pp, |w|)^d} \:\mu(A).
\end{split}\]

Hence
\[\begin{split}
&\sum_{q\in \NN_S}\sum_{\pp \in \PP_S} \hspace{-0.12in}\sum_{\scriptsize \begin{array}{c}w\in \ZZ-\{0\}\\ \gcd(q\pp, w)=1 \end{array}}\hspace{-0.15in}\int_{\QQ_S^d} I_{A} (q\ov) I_{A} \Big(\frac w {\pp} \ov\Big) d\ov
=\sum_{q,\pp,w} \mu(A(q)\cap A\Big(\frac w {\pp}\Big))\\
&\hspace{0.4in}\le\sum_{q,\pp,w}
\frac 1 {\max(q\pp, |w|)^d} \:\mu(A)
=\sum_{q' ,w}
\frac 1 {\max(q', |w|)^d} \:\mu(A),
\end{split}\]
where $q'=q\pp$. Since the quantities in the summation are symmetric with respect to the cases when [$q'>w$ and $q'<w$], and [$w$ and $-w$, $w\in \NN$], respectively,

\[\begin{split}
&\sum_{q'\in \NN} \hspace{-0.12in}\sum_{\scriptsize \begin{array}{c}w\in \ZZ-\{0\}\\ \gcd(q\pp, w)=1 \end{array}}\frac 1 {\max(q', |w|)^d} \:\mu(A)=4\times\hspace{-0.17in}\sum_{\scriptsize \begin{array}{c}
q',w \in \NN, 0<w<q' \\
\gcd(q',w)=1\end{array}}
\frac 1 {(q')^d} \:\mu(A)\\
&\hspace{0.4in}=4\times \sum_{q'\in \NN} \frac {\phi(q')}{(q')^d} \:\mu(A)
=\frac{4\zeta(d-1)}{\zeta(d)} \:\mu(A).
\end{split}\]

Therefore if we take $C_{\mathfrak F_{prod}}= 4\zeta(d-1)/\zeta(d)$, it follows that
for any $A \in \mathfrak F_{prod}$,
\[
\int_{\SG/\Gamma} \widetilde{I_A}^2 d\sg \le \mu(A)^2 + C_{\mathfrak F_{prod}}\:\mu(A).
\]

\vspace{0.2in}
%%%%%
\noindent (c) 
For $A\in \mathcal F$, let $C_A>0$ be a positive constant such that
\[
\int_{\SG/\Gamma} \widetilde{I_A}^2 d\sg=\mu(A)^2+C_A\:\mu(A).
\] 
Then the property \eqref{class F} implies that $C_A\le C_{\mathfrak F}$. Moreover, by the proof of (a), it follows that for any $\T\succeq 0$,
\[
\int_{\SG/\Gamma} \widetilde{I_{\T A}}^2 d\sg=\mu(\T A)^2+C_A\:\mu(\T A).
\]
Given $\T_1\prec \T_2$, let $B=\T_2 A- \T_1 A$.
For $q\in \NN_S$, $\pp \in \PP_S$ and $w\in \ZZ$, since
\[\begin{split}
&B(q)\cap B\Big(\frac w {\pp}\Big) \subseteq 
\T_2 A(q)\cap \T_2 A\Big(\frac w {\pp}\Big) - \T_1 A(q)\cap \T_1 A\Big(\frac w {\pp}\Big)\;\text{and} \\
&\T_1 A(q)\cap \T_1 A\Big(\frac w {\pp}\Big) \subseteq
\T_2 A(q)\cap \T_2 A\Big(\frac w {\pp}\Big),
\end{split}\]
it follows that
\[\begin{split}
&\sum_{q\in \NN_S}\sum_{\pp \in \PP_S} \hspace{-0.12in}\sum_{\scriptsize \begin{array}{c}w\in \ZZ-\{0\}\\ \gcd(q\pp, w)=1 \end{array}} \hspace{-0.12in}
\mu(B(q)\cap B\Big(\frac w {\pp}\Big))\\
&\hspace{0.4in}\le\sum_{q,\: \pp,\: w}
\mu(\T_2 A(q)\cap \T_2 A\Big(\frac w {\pp}\Big))
-\sum_{q,\: \pp,\: w}
\mu(\T_1 A(q)\cap \T_1 A\Big(\frac w {\pp}\Big))\\
&\hspace{0.4in}\le C_A\: \mu(\T_2 A) - C_A\: \mu(\T_1 A)
=C_A\: \mu(B) \le C_{\mathfrak F}\: \mu(B).
\end{split}\]

Hence the result follows from Corollary \ref{Corollary L2 norm}.
\end{proof}
%%%%%%%%%%%

%%%%%%%%%%%%%%%%%%%%%
%%%%%%%%%%%%%%%%%%%%%
\section{Application}\label{Application}
%%%%%%%%%%%%%%%%%%%%%
%%%%%%%%%%%%%%%%%%%%%
% Random Minkowski %%%%%%%%%%%%%%%%%%%%
The following theorem is a generalization of Theorem 2.2 in \cite{AM09}. The proof is not much different from that of \cite[Theorem 2.2]{AM09} but we include it for  completeness.

%%%%%%%%%%%%%%%%%%%%%%%%%%%%%%%%%%%%%%%
\begin{theorem}\label{LLUF Theorem 2.2} 
Let $\mathfrak F$ be a family of measurable sets $A$ with $\mu(A)\in (0,\infty)$ such that there is a constant $C_\mathfrak F>0$ for which
\[
\int_{\SG/\Gamma} \widetilde{I_A}^2 d\sg \le \mu(A)^2 +C_{\mathfrak F}\:\mu(A),\; \forall A\in \mathfrak F.
\]

Then for any $A\in \mathfrak F$, we have
%\begin{flalign*}
%\mathrm{(a)}&\;
\[
m\left(\left\{\sg\Gamma \in \SG/\Gamma : (\sg\ZZ_S^d-\{\origin\}) \cap A = \emptyset \right\}\right)\le \frac {C_{\mathfrak F}}{\mu(A)}.
\]
\end{theorem}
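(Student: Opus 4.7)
The plan is to use the second moment method (Chebyshev's inequality) applied to the random variable $\widetilde{I_A}$ on the probability space $(\SG/\Gamma, m)$. Since $\widetilde{I_A}$ counts the nonzero lattice points of $\sg\ZZ_S^d$ lying in $A$, the target event is precisely the event $\{\widetilde{I_A} = 0\}$, so the conclusion will follow from a standard variance estimate.

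First, I would identify the exceptional set. Because $\widetilde{I_A}(\sg\Gamma) = \#\bigl((\sg\ZZ_S^d - \{\origin\}) \cap A\bigr)$ is a nonnegative integer, we have the equality
\[
\bigl\{\sg\Gamma \in \SG/\Gamma : (\sg\ZZ_S^d-\{\origin\}) \cap A = \emptyset \bigr\} = \bigl\{\sg\Gamma : \widetilde{I_A}(\sg\Gamma) = 0\bigr\}.
\]
Next, I would pin down the mean by applying Proposition~\ref{HLM Proposition 3.11} (equivalently, Theorem~\ref{MV Theorem 4} in the case $k=1$) to $f = I_A$, which yields $\int_{\SG/\Gamma} \widetilde{I_A}\, d\sg = \mu(A)$. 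Combining with the hypothesis on the second moment gives the variance bound
\[
\int_{\SG/\Gamma} \bigl(\widetilde{I_A} - \mu(A)\bigr)^2 d\sg = \int_{\SG/\Gamma} \widetilde{I_A}^{\,2}\, d\sg - \mu(A)^2 \le C_{\mathfrak F}\,\mu(A).
\]

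Finally, I would apply Chebyshev's inequality: on the event $\{\widetilde{I_A} = 0\}$ we have $\bigl|\widetilde{I_A}(\sg\Gamma) - \mu(A)\bigr| = \mu(A)$, so
\[
m\bigl(\{\widetilde{I_A} = 0\}\bigr) \le \frac{1}{\mu(A)^2}\int_{\SG/\Gamma}\bigl(\widetilde{I_A}-\mu(A)\bigr)^2 d\sg \le \frac{C_{\mathfrak F}}{\mu(A)},
\]
which is the claimed bound.

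There is no serious obstacle here: the heavy lifting is entirely contained in the hypothesis (the uniform second moment bound), which has been set up carefully in Proposition~\ref{Prop class F} for the families of interest. The only minor point to keep track of is that Proposition~\ref{HLM Proposition 3.11} is stated for $\SL_d(\QQ_S)/\SL_d(\ZZ_S)$, so for the $\UL_d$ case one invokes the $k=1$ specialization of Theorem~\ref{MV Theorem 4} instead (where the isolated $f(\origin)$ term cancels once one subtracts the contribution from the zero vector, giving the same Siegel identity $\int \widetilde{I_A}\, d\sg = \mu(A)$ on either homogeneous space from Remark~\ref{Remark 2.2}).
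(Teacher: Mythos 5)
Your proof is correct, but it runs through a slightly different inequality than the paper's. Both arguments are second-moment methods that consume only the mean $\int_{\SG/\Gamma}\widetilde{I_A}\,d\sg=\mu(A)$ (Siegel's formula) and the hypothesized bound on $\int\widetilde{I_A}^{\,2}\,d\sg$, but the final step differs: you apply Chebyshev's inequality to $|\widetilde{I_A}-\mu(A)|$, noting that the zero event $\{\widetilde{I_A}=0\}$ is contained in $\{|\widetilde{I_A}-\mu(A)|\ge\mu(A)\}$, whereas the paper applies Cauchy--Schwarz to the factorization $\widetilde{I_A}=\widetilde{I_A}\cdot I_{\mathcal U^c}$ (with $\mathcal U$ the exceptional set), which is a Paley--Zygmund-style argument. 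The paper's route produces the marginally sharper intermediate bound $m(\mathcal U)\le C_{\mathfrak F}/(\mu(A)+C_{\mathfrak F})$, which it then relaxes to the stated $C_{\mathfrak F}/\mu(A)$; your Chebyshev step lands on $C_{\mathfrak F}/\mu(A)$ directly. Your closing remark about using the $k=1$ specialization of Theorem~\ref{MV Theorem 4} (rather than Proposition~\ref{HLM Proposition 3.11}) to cover the $\UL_d(\QQ_S)/\UL_d(\ZZ_S)$ case is a sound way to justify the first-moment identity on either $\SG/\Gamma$, consistent with what the paper uses tacitly when it invokes ``Siegel's integral formula.''
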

%%%%%%%%%%
\begin{proof}
Given $A \in \mathcal F$, define
\[\mathcal U:=\left\{\sg \Gamma : (\sg \ZZ_S^d-\{\origin\}) \cap A=\emptyset \right\}\subseteq \SG/\Gamma
\]
and denote $u=m(\mathcal U)$ and $a=\mu(A)$. Define functions on $\SG/\Gamma$ by 
\[\begin{split}
\phi&=\phi_{A}=\widetilde{I_{A}},\\
\psi&=\psi_{\mathcal U}=I_{\mathcal U^c},
\end{split}\]
where $I_A$ and $I_{\mathcal U^c}$ are the indicator functions of $A$ and the complement of $\mathcal U$ in $\QQ_S^d$ and $\SG/\Gamma$, respectively.
Since 
\[\psi(\sg \Gamma)=0 \Leftrightarrow \sg\ZZ_S^d \cap A = \emptyset
\Rightarrow \phi(\sg\Gamma)=0,
\]
we obtain the equality $\phi=\phi\cdot\psi$.
From Siegel's integral formula, it follows that
$$a=\|\phi\|_{\mathcal L^1}\quad\text{and}\quad1-u=\|\psi\|_{\mathcal L^1}=\|\psi\|_{\mathcal L^2}.$$ % since $\psi$ is an indicator function.

Then Cauchy-Schwarz inequality says that
\begin{equation}\label{THII Theorem 2.2 eq 1}
\begin{split}
a^2&=\left(\int_{\SG/\Gamma} \phi d\sg \right)^2 
\le \left(\int_{\SG/\Gamma} \phi^2 d\sg \right)\left(\int_{\SG/\Gamma} \psi^2 d\sg \right)\\
&=\left(\int_{\SG/\Gamma} \phi^2 d\sg \right)(1-u).
\end{split}\end{equation}

By assumption of $\mathfrak F$, since
%\begin{equation}\label{THII Theorem 2.2 eq 2}
$\int_{\SG/\Gamma} \phi^2 d\sg 
\le a^2 + C_{\mathfrak F}\: a$,
%\end{equation}
%Take $C_d=4\zeta(d-1)/\zeta(d)$. 
%From \eqref{THII Theorem 2.2 eq 1} and \eqref{THII Theorem 2.2 eq 2}, 
\[
u \le 1 - \frac {a^2} {a^2+C_{\mathfrak F}\: a} =\frac {C_{\mathfrak F}\: a} {a^2 + C_{\mathfrak F}\: a} \le \frac {C_{\mathfrak F}} {a}.
\]
\end{proof}
%%%%%%%%%%%

%%%%%%%%%%%%%%%%%%%%%%%%%%%%%%%%%%%%%%%%%%%%%%%%%%%%%%%%%%%%%%%%%%%%%%%%
\begin{proposition} \label{HLM Theorem 1.2}
Let $d\ge 3$.
%Let $\{\I_{\j}\subset \QQ_S : \j \in \NN^{s+1}\}$ be a family of bounded open sets such that $\I_{\j} \subseteq \I_{\j'}$ if $\j \succeq \j'$. 
Let $\q{}{}$ be a nondegenerate isotropic quadratic form on $\QQ_S^d$ and $R>0$ be any positive number. For any $\I \subseteq \QQ_S$ such that $\sup_{\ov\in \I} \|\ov\|_S <R$, we have
\[
\mu(\q{-1}{}(\I)\cap B(\origin, \T))= c_{\q{}{}}\; \mu(\I)|\T|^{d-2}+o_{\q{}{}, R}(|\T|^{d-2}),
\]
where $B(\origin, \T)=\{ \ov \in \QQ_S^d : \|\ov\|_p < T_p, \;\forall p \in S\}$. 
\end{proposition}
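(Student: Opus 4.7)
The plan is to factor the computation over places $p\in S$, perform a local rescaling at each place, and then invoke a level-set decomposition of Haar measure along the fibres of $q_p$. Since $\mu=\prod_{p\in S}\mu_p$, the form $\q{}{}=(q_p)_{p\in S}$ acts coordinatewise, and $B(\origin,\T)=\prod_{p\in S} B_p(\origin,T_p)$, Fubini identifies the pushforward of $\mu|_{B(\origin,\T)}$ under $\q{}{}$ with the product measure $\nu_{\T}:=\prod_{p\in S}\nu_{p,T_p}$ on $\QQ_S$, where $\nu_{p,T_p}:=(q_p)_{*}(\mu_p|_{B_p(\origin,T_p)})$. Consequently
\[
\mu(\q{-1}{}(\I)\cap B(\origin,\T))=\nu_{\T}(\I),
\]
and this holds for any measurable $\I\subseteq\QQ_S$; no product structure on $\I$ is required.

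For each $p\in S$, substituting $\ov_p=T_p\ow_p$ together with the homogeneity $q_p(T_p\ow_p)=T_p^{2}q_p(\ow_p)$ yields $\nu_{p,T_p}(E)=T_p^{d}\nu_{p,1}(T_p^{-2}E)$ for every Borel $E\subseteq\QQ_p$. If $h_p$ denotes the density of $\nu_{p,1}$ with respect to $\mu_p$, this translates to
\[
d\nu_{\T}(c)=|\T|^{d-2}\prod_{p\in S}h_p(T_p^{-2}c_p)\,d\mu(c).
\]
Nondegeneracy of $q_p$ ensures $\nabla q_p$ vanishes only at $\origin$, so $q_p$ is a submersion on $B_p(\origin,1)\setminus\{\origin\}$ and $h_p$ is continuous on $\QQ_p\setminus\{0\}$. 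The crucial claim is that $h_p$ extends continuously to $0$ with a strictly positive value $c^{(p)}_{q_p}:=h_p(0)$. At the real place this follows by parametrising a tubular neighbourhood of the isotropic cone $\{q_\infty=0\}\cap B_\infty(\origin,1)$ via its smooth $(d-1)$-dimensional structure away from $\origin$, the apex contributing no mass when $d\geq 3$; at a finite place, the same statement is the standard continuity at $s=-1$ of the Igusa local zeta function $Z_p(s)=\int_{B_p(\origin,1)}|q_p(\ov_p)|_p^{s}\,d\mu_p(\ov_p)$, which under nondegeneracy and isotropy has no pole at $s=-1$.

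Once continuity of the $h_p$ at $0$ is in hand, for $c\in\QQ_S$ with $\|c\|_S<R$ we have $\|T_p^{-2}c_p\|_p\leq R\,T_p^{-2}\to 0$ as $\T\to\infty$, so $\prod_{p\in S}h_p(T_p^{-2}c_p)\to c_{\q{}{}}:=\prod_{p\in S}c^{(p)}_{q_p}$ uniformly on $\{\|c\|_S<R\}$. Therefore
\[
\nu_{\T}(\I)=|\T|^{d-2}\int_{\I}\prod_{p\in S}h_p(T_p^{-2}c_p)\,d\mu(c)=c_{\q{}{}}\,\mu(\I)\,|\T|^{d-2}+o_{\q{}{},R}(|\T|^{d-2}),
\]
with the error depending on $R$ through the modulus of continuity of the $h_p$ at $0$.

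The main obstacle is precisely the control of $h_p$ at $c=0$: the level set $\{q_p=0\}$ passes through the unique critical point of $q_p$, so one must verify that the level-set density does not blow up there. This is where the hypotheses $d\geq 3$ and isotropy of $q_p$ are essential. Isotropy makes the fibre nonempty, so that $c^{(p)}_{q_p}>0$, while $d\geq 3$ provides enough dimensions for the singularity of the fibre at $\origin$ to be integrable. Without isotropy the leading constant would vanish, and in dimension $d=2$ the density at $0$ could diverge.
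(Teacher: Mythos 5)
Your reduction to individual places via Fubini and the rescaling identity $\nu_{p,T_p}(E)=T_p^{d}\nu_{p,1}(T_p^{-2}E)$ are in the right spirit and agree with the paper's reduction, which simply cites Theorem~5 of Kelmer--Yu for $p=\infty$ and Proposition~4.2 of \cite{HLM} for $p\in S_f$. The genuine issue is your justification of the key claim, namely the continuity and positivity of $h_p$ at $0$, which you assert but do not prove. At the archimedean place the tubular-neighbourhood sketch is only a gesture, and at the finite places the Igusa-zeta argument is in fact backwards. If $Z_p(s)=\int_{\ZZ_p^d}|q_p|_p^{s}\,d\mu_p$ were holomorphic at $s=-1$, then $\sum_n p^{n}\mu_p(|q_p|_p=p^{-n})<\infty$, which forces $p^{n}\mu_p(|q_p|_p\le p^{-n})\to 0$, i.e.\ $h_p(0)=0$, not a positive value. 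For nondegenerate isotropic $q_p$ with $d\ge 3$ the local zeta function has a simple pole at $s=-1$ (with the other candidate pole at $s=-d/2<-1$), and it is precisely the residue at this simple pole that yields a finite, strictly positive $h_p(0)$; so your stated hypothesis ``no pole at $s=-1$'' directly contradicts your own conclusion $c^{(p)}_{q_p}>0$. Moreover, absence of a pole at $s=-1$ only controls the averaged quantity $p^n b_n$; upgrading to continuity of the Radon--Nikodym density $h_p$ near $0$ requires the rationality and explicit pole structure of $Z_p$, which you would need to invoke. Since you are attempting a self-contained argument where the paper just defers to the cited results, these gaps matter: as written the finite-place half of your proof is not correct, and the infinite-place half is not complete.
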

\begin{proof}
It suffices to show that each $p\in S$,
\[
\mu_p(q^{-1}_p(I_p)\cap B(\origin, T_p))
=c_{q_p}\mu_p(I_p)T_p^{d-2}+ o_{q_p}(|I_p|_pT_p^{d-2}),
\]
where $\q{}{}=(q_p)_{p\in S}$, $\I=(I_p)_{p\in S}$ and $\T=(T_p)_{p\in S}$. 
For $p=\infty$, see Theorem 5 in \cite{KY} and for $p\in S_f$, it follows from the proof of Proposition 4.2 in \cite{HLM}. 
%See Lemma 3.8 in \cite{EMM} for the case that $S=\{\infty\}$ and Proposition 1.2, Proposition 4.2 in \cite{HLM} for general $S$.
%Note that the collection $\{I_{\I_{\j}}\}$ of indicator functions of $\I_{\j}$'s is equicontinuous.
%%%%%%%%%
%Proposition 1.2 in \cite{HLM} does not mention the equicontinuous family of functions, but it is not hard to deduce the proposition from the proof of the proposition 4.2 in \cite{HLM}.  
\end{proof}

%%%%%%%%%%%%%%%%%%%%%%%%%%%%%%%%%%%%%%%%%%%%%%%%
\begin{proof}[Proof of Theorem~\ref{AM Thm 1.1}] 
Fix an isotropic quadratic form $\q{0}{}$ on $\QQ_S^d$ and a constant $\xi= (\xi_p)_{p\in S}\in \QQ_S$. 
%(\xi_\infty, \xi_{1}, \ldots, \xi_{s})

For any $\j=(j_p)_{p\in S} \in \NN^{s+1}$,%(j_{\infty}, j_{1}, \ldots, j_{s})\in \NN^{s+1}$,
define
\[\begin{split}
% A_\j does not contain 0 since q^0_\infty(v)\neq 0.
\mathcal A_\j&=\left\{\ov \in \QQ_S^d : 
\begin{array}{c}
\left|q^0_\infty(\ov)-\xi_\infty \right|_\infty< e^{-j_\infty};\\
\left|q^0_{p}(\ov)-\xi_p \right|_p < p^{-j_p},\;p\in S_f\\
\end{array}\right\} \\
&\hspace{1.in}\cap
\left\{\ov \in \QQ_S^d :
\begin{array}{c}
\|\ov\|_\infty<e^{(\frac 1 {d-2}+\delta)(j_\infty-1)};\\
\|\ov\|_{p}<{p}^{(\frac 1 {d-2}+\delta)j_p},\;p\in S_f\\
\end{array}\right\}.
\end{split}\]

Note that $\mathcal A_\j$ is contained in $\mathfrak F_{prod}$, which is defined in Proposition \ref{Prop class F}.
By Proposition~\ref{HLM Theorem 1.2}, there is $\j_0\succ 0$ such that for any $\j\succ \j_0$,
\[\begin{split}
\mu\left(\mathcal A_\j\right)
&> c_{\q{}{0}} \Big(e^{-j_\infty}\cdot\prod_{p \in S_f} p^{-(j_p+1)}\Big)\times
\Big(e^{(\frac 1 {d-2} + \delta)(j_\infty-1)}\cdot\prod_{p\in S_f} p^{(\frac 1 {d-2} + \delta)(j_p-1)}\Big)^{d-2}\\
&= c_{\q{}{0}}(ep_1\cdots p_s)^{-1} \left(e^{j_\infty-1}p_1^{j_{p_1}-1}\cdots p_s^{j_{p_s}-1}\right)^{(d-2)\delta}\\
&=c'_0  \|\exp(\j-\one)\|^{(d-2)\delta},
\end{split}\]
where $c'_0=c_{\q{}{0}}e^{-1}(p_1\cdots p_s)^{-2}$ and $\j-\one:=(j_\infty-1, j_{p_1}-1, \ldots, j_{p_s}-1)$.

By Theorem \ref{LLUF Theorem 2.2},
\[m\left(\left\{\sg \Gamma \in \SG/\Gamma : (\sg \ZZ_S^d-\{\origin\}) \cap \mathcal A_\j=\emptyset \right\}\right)
<\frac {C_{\mathfrak F_{prod}}} {c'_0} \|\exp(\j-\one)\|^{-(d-2)\delta}. 
\]

Hence
\begin{align*}
&m\left(\left\{\sg\Gamma \in \SG/\Gamma : (\sg \ZZ_S^d-\{\origin\})\cap \mathcal A_\j=\emptyset\;\text{for infinitely many}\;\j\right\}\right)%\label{null set}\tag{*}
\\
&\le m\left(\bigcap_{\j_0\succeq 0}\;\bigcup_{\j\succeq \j_0}\left\{\sg\Gamma : (\sg \ZZ_S^d-\{\origin\}) \cap \mathcal A_\j=\emptyset\right\}\right)\\
&\le\lim_{\j_0\rightarrow \infty} \frac {C_{\mathfrak F_{prod}}}{c'_0} \sum_{\j\succeq \j_0}\|\exp(\j-\one)\|^{-(d-2)\delta}=0. 
\end{align*}

Consider an isotropic quadratic form $\q{}{}$ such that 
$$\q{}{}(\ov)={\q{0}{}}(\sg \ov),\;\forall \ov \in \QQ_S^d$$
for some $\sg \in \SG$ and suppose that $\sg \ZZ_S^d-\{\origin\}$ 
intersects with all but finitely many $\mathcal A_\j$'s.
By the Borel-Cantelli lemma, the set of such quadratic forms $\q{}{}$ has full measure.

Choose $\varepsilon_0>0$ such that 
$(\sg \ZZ_S^d-\{\origin\}) \cap \mathcal A_\j \neq \emptyset$ 
whenever $e^{-j_\infty}<\varepsilon_0$ and $p^{-j_p}<\varepsilon_0$, $\forall p\in S_f$.

For any positive $\varepsilon < \varepsilon_0$, 
there is $\j=(j_\infty, j_{p_1}, \ldots, j_{p_s})$ such that 
$$e^{-j_\infty}< \varepsilon \le e^{-j_\infty+1}\quad\text{and}\quad 
p^{-j_p}<\varepsilon\le p^{-j_p+1},\;p\in S_f.$$ 

Since $(\sg \ZZ_S^d-\{\origin\}) \cap \mathcal A_\j\neq \emptyset$, there is a nonzero $\ow=\sg\ov \in \sg \ZZ_S^d$ so that
\[\begin{split}
\left|q_\infty(\ov)-\xi_\infty\right|&=\left|q^0_\infty(\ow)-\xi_\infty\right|_\infty< e^{-j_\infty}<\varepsilon,\\
\left|q_p(\ov)-\xi_p\right|&=\left|q^0_p(\ow)-\xi_p\right|_{p}< p^{-j_p}<\varepsilon,\; p\in S_f,
\end{split}\]
and
\[\begin{split}
\|\ov\|_\infty&=\|\sg^{-1}(\sg\ov)\|_\infty 
\le \|\sg^{-1}\|_{op}\|\sg \ov\|_\infty
\le \|\sg^{-1}\|_{op}\left(e^{j_\infty -1}\right)^{\frac 1 {d-2}+\delta}\\
&\le c_{\q{}{}} \varepsilon^{-\left(\frac 1 {d-2}+\delta\right)},\\
\|\ov\|_{p}&=\|\sg^{-1}(\sg\ov)\|_{p} 
\le \|\sg^{-1}\|_{op}\|\sg \ov\|_{p}
\le \|\sg^{-1}\|_{op}\left(p^{j_p -1}\right)^{\frac 1 {d-2}+\delta}\\
&\le c_{\q{}{}} \varepsilon^{-\left(\frac 1 {d-2}+\delta\right)},\; p\in S_f,\\
\end{split}\]
where $c_{\q{}{}}$ is the operator norm $\|\sg^{-1}\|_{op}=\max_{p\in S} \{\|g_p^{-1}\|_{op} \}$ of $\sg^{-1}=(g_p^{-1})_{p\in S}$.%:\QQ_S^d\rightarrow \QQ_S^d$.
\end{proof}

%%%%%%%%%%%%%%%%%%%%%%%%%%%%%%%%%%
\begin{theorem}\label{Sch Lemma 2} 
Let $\mathfrak F$ be the family of star-shaped sets centered at the origin for which the conclusion of Theorem \ref{LLUF Theorem 2.2} holds.
Let $A\subseteq \mathfrak F$ and $N_1$, $N_2\in \NN$ with $N_2 \ge N_1$. For each $\CJ^f\in \prod_{p\in S_f}p^{\NN}$, 
define
\[
R_{A} (N_1, N_2, \CJ^f) (\sg \ZZ_S^d)
:= \widetilde I_{(j_2, \CJ^f)A -(j_1, \CJ^f)A} (\sg \ZZ_S^d) - (N_2-N_1)\mu(A),
\]
where $j_1, j_2\in \RR_{>0}$ are chosen so that $|(j_i, \CJ^f)|^d=N_i$, $i=1,2$. For each $\ell\in \NN$, put
\[
K_{\ell}
:=\left\{(u2^t, (u+1)2^t) : \begin{array}{c}
t=0,1,2, \ldots, \ell \\
u=0,1,2, \ldots, 2^{\ell-t}-1\\
\end{array}
\right\}.
\]
For any positive function $\psi$ on $\NN$, consider the set
\[
\mathcal B_{\ell}
:=\left\{
\sg\Gamma \in \SG/\Gamma : 
\sum_{(N_1, N_2)\in K_{\ell}} R_{A}^2(N_1, N_2, \CJ^f)(\sg \ZZ_S^d) \ge (\ell+1)2^{\ell}\psi(\ell)  
\right\}.
\]
Then the set $\mathcal B_{\ell}$ is independent of the choice of $\CJ^f$ and  
there is a constant $C(A,d)>0$ such that
\[
m(\mathcal B_{\ell})< C(A, d)\psi^{-1} (\ell).
\]
\end{theorem}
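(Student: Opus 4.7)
The plan is to bound the $\mathcal L^2$-norm of each summand $R_A(N_1,N_2,\CJ^f)$ by a quantity that is manifestly independent of $\CJ^f$, then sum this estimate over $K_\ell$ and invoke Markov's inequality. Since $A\in\mathfrak F$ is a star-shaped set centered at the origin, it decomposes as a product $A = A_\infty \times \prod_{p\in S_f} A_p$. Writing $\T_i := (j_i, \CJ^f)$, so that $|\T_i|^d = N_i$, the dilates $\T_1 A$ and $\T_2 A$ share their $p$-adic factors, and their Archimedean factors satisfy $j_1 A_\infty \subseteq j_2 A_\infty$ because $A_\infty$ is star-shaped and $j_1<j_2$ (forced by $(N_1,N_2)\in K_\ell$ with common $\CJ^f$). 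Hence $\T_1 A \subseteq \T_2 A$ with $\T_1 \prec \T_2$, so the set difference $B_\CJ := \T_2 A - \T_1 A$ is of exactly the form treated in Proposition~\ref{Prop class F}(c) and inherits from $\mathfrak F$ the uniform second-moment bound
\[
\int_{\SG/\Gamma} \widetilde I_{B_\CJ}^{\,2}\, d\sg \;\le\; \mu(B_\CJ)^2 + C_\mathfrak F\,\mu(B_\CJ).
\]

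A change-of-variable computation as in the proof of Proposition~\ref{Prop class F}(a) gives $\mu(\T_i A) = |\T_i|^d\mu(A) = N_i\mu(A)$, whence $\mu(B_\CJ) = (N_2-N_1)\mu(A)$. Combining this with Siegel's mean value formula (Proposition~\ref{HLM Proposition 3.11}), namely $\int \widetilde I_{B_\CJ}\,d\sg = \mu(B_\CJ)$, expanding the square yields
\[
\int_{\SG/\Gamma} R_A(N_1,N_2,\CJ^f)^2\,d\sg \;=\; \int_{\SG/\Gamma}\widetilde I_{B_\CJ}^{\,2}\,d\sg - \mu(B_\CJ)^2 \;\le\; C_\mathfrak F\,(N_2-N_1)\,\mu(A).
\]
The right-hand side does not depend on $\CJ^f$, and this is the precise sense in which the bound defining $\mathcal B_\ell$ is $\CJ^f$-invariant.

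Next I will sum over $K_\ell$. By construction, for each $t\in\{0,1,\ldots,\ell\}$ the set $K_\ell$ contains exactly $2^{\ell-t}$ pairs $(N_1,N_2)$ with $N_2-N_1 = 2^t$, so
\[
\sum_{(N_1,N_2)\in K_\ell}\int_{\SG/\Gamma} R_A^2(N_1,N_2,\CJ^f)\,d\sg \;\le\; C_\mathfrak F\,\mu(A)\sum_{t=0}^{\ell} 2^{\ell-t}\cdot 2^t \;=\; (\ell+1)\,2^\ell\,C_\mathfrak F\,\mu(A).
\]
Since the summand defining $\mathcal B_\ell$ is non-negative, Markov's inequality will then immediately produce
\[
m(\mathcal B_\ell) \;\le\; \frac{(\ell+1)\,2^\ell\,C_\mathfrak F\,\mu(A)}{(\ell+1)\,2^\ell\,\psi(\ell)} \;=\; \frac{C_\mathfrak F\,\mu(A)}{\psi(\ell)},
\]
so the conclusion holds with $C(A,d) := C_\mathfrak F\,\mu(A)$.

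The only substantive analytic input is the per-pair second moment estimate, and that is already packaged into Proposition~\ref{Prop class F}(c); verifying its hypotheses reduces to the product decomposition of a star-shaped $A$ together with $j_1<j_2$. The dyadic accounting of $K_\ell$ and the final Markov step are routine, so I do not anticipate any real obstacle beyond correctly identifying $B_\CJ$ as a set of the form addressed by Proposition~\ref{Prop class F}(c) and tracking the uniformity in $\CJ^f$ through the calculation.
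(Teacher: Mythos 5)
The student's derivation of the measure bound $m(\mathcal B_\ell) < C(A,d)\psi^{-1}(\ell)$ is correct and mirrors the paper's: identify $B=\T_2 A-\T_1 A$ as lying in the enlarged family from Proposition~\ref{Prop class F}(c), apply the mean value formula and the $\mathcal L^2$-bound \eqref{class F} to get $\int R_A^2\,d\sg\le C_\mathfrak F\,\mu(A)\,(N_2-N_1)$, count the dyadic pairs, and conclude with Markov. The constant $C(A,d)=C_\mathfrak F\,\mu(A)$ also matches.

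However, there is a genuine gap in the first part of the theorem. You assert that the independence of $\mathcal B_\ell$ from $\CJ^f$ amounts to the observation that the $\mathcal L^2$-bound $C_\mathfrak F\,(N_2-N_1)\,\mu(A)$ does not involve $\CJ^f$. That only shows the \emph{upper bound on the measure} of $\mathcal B_\ell$ is uniform in $\CJ^f$; it does not show that $\mathcal B_\ell$ is the same subset of $\SG/\Gamma$ for every admissible choice of $\CJ^f$, which is what the theorem claims (and what is actually used later to pass from the countable family of $\CJ^f$'s to all of them simultaneously in the proof of Theorem~\ref{AM Thm 1.3}). The paper establishes the set-theoretic independence pointwise: if $\CJ^f$ and ${\CJ^f}'$ both give $|(j_i,\CJ^f)|^d=|(j_i',{\CJ^f}')|^d=N_i$, then $(j_i,\CJ^f)=\pp(j_i',{\CJ^f}')$ for some $\pp\in\PP_S$, and since $\pp^{-1}\ZZ_S^d=\ZZ_S^d$ the Siegel transform is invariant under this rescaling, so $R_A(N_1,N_2,\CJ^f)(\sg\ZZ_S^d)=R_A(N_1,N_2,{\CJ^f}')(\sg\ZZ_S^d)$ for every $\sg$. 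You need this pointwise identity, not just the $\mathcal L^2$ uniformity, to conclude that $\mathcal B_\ell$ is genuinely independent of $\CJ^f$.
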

%%%%%%%%%%
\begin{proof}
For any $\CJ^f$, ${\CJ^f}'\in \prod_{p\in S_f} p^{\NN}$, let
\[
N_1=|(j_1,\CJ^f)|^d=|(j'_1, {\CJ^f}')|^d\quad\text{and}\quad
N_2=|(j_2,\CJ^f)|^d=|(j'_2, {\CJ^f}')|^d.
\] 
Then there is $\pp \in \PP_S$ such that $(j_i, \CJ^f)=\pp(j'_i, {\CJ^f}')$, $i=1,2$. Hence
\[\begin{split}
R_{A}(N_1, N_2, \CJ^f)
&= \widetilde I_{\pp(j'_2, {\CJ^f}')A -\pp(j'_1, {\CJ^f}')A} (\sg \ZZ_S^d) - (N_2-N_1)\mu(A)\\
&= \widetilde I_{(j'_2, {\CJ^f}')A -(j'_1, {\CJ^f}')A} (\sg (\pp^{-1}\ZZ_S^d)) - (N_2-N_1)\mu(A)\\
&=R_{A}(N_1, N_2, {\CJ^f_2}'),
\end{split}\]
since $\pp^{-1}\ZZ_S^d=\ZZ_S^d$.
Therefore the choice of $\CJ^f$ is not relavant to the definition of $\mathcal B_{\ell}$. 

\vspace{0.1in}
%%%%%%%%%%
Now, denote $B=(j_2,\CJ^f)A-(j_1,\CJ^f)A$ so that $\mu(B)=(N_2-N_1)\mu(A)$. 

By Proposition \ref{Prop class F} (c), it follows that
\[\begin{split}
\int_{\SG/\Gamma} R^2_A(N_1,N_2,\CJ^f) (\sg \ZZ_S^d) d\sg
&=\int_{\SG/\Gamma} \widetilde{I_B}^2- 2\mu(B)\widetilde{I_B}+\mu(B)^2 d\sg\\
&\hspace{-0.5in}=\int_{\SG/\Gamma} \widetilde I_{B} d\sg - \mu(B)^2 \le C_{\mathfrak F}\: \mu (B)=C_{\mathfrak F}\:\mu(A)(N_2-N_1).
\end{split}\]

Put $C(A,d)=C_{\mathfrak F}\:\mu(A)$. Then
\[\begin{split}
&\int_{\SG/\Gamma} \sum_{(N_1, N_2)\in K_{\ell}} R_{A}^2(N_1, N_2, \CJ^f)(\sg \ZZ_S^d) d\sg \\
&\hspace{0.7in}=\sum_{t=0}^{\ell}\sum_{u=0}^{2^{\ell-t}-1}
\int_{\SG/\Gamma} R^2_A(u2^t, (u+1)2^t, \CJ^f)(\sg \ZZ_S^d) d\sg\\
&\hspace{0.7in}\le \sum_{t=0}^{\ell} C(A,d) 2^{\ell} = C(A,d) (\ell+1)2^{\ell}.
\end{split}\]

Hence
\[\begin{split}
m(\mathcal B_{\ell})
&\le \int_{\mathcal B_{\ell}}
\frac {\sum_{(N_1, N_2)\in K_{\ell}} R_{A}^2(N_1, N_2, \CJ^f)(\sg \ZZ_S^d)}
{(\ell+1)2^{\ell}\psi(\ell)} d\sg\\
&\le \frac {C(A,d)(\ell+1)2^{\ell}}{(\ell+1)2^{\ell} \psi(\ell)}
\le C(A,d) \psi^{-1}(\ell).
\end{split}\]
\end{proof}

%%%%%%%%%%%%%%%%%%%%%%%%%%%%%%%%%%%%%%%%%%%%%%%%%
\begin{proof}[Proof of Theorem~\ref{AM Thm 1.3}]
Define
\begin{center}
$\Psi:=\{(j, \CJ^f)\in \RR_{\ge 1}\times \prod_{p\in S_f} p^{\NN} : 
|(j,\CJ^f)|^d=N\in \NN\}.$
\end{center}

We first want to show that for a given $\delta>0$, if $(j, \CJ^f)\in \Psi$ is large enough,
\begin{equation}\label{pf AM Thm 1.3 eq 1}
\left|\NT(\sg\ZZ_S^d, (j,\CJ^f)A_0) - |(j,\CJ^f)|^d\right| = o( |(j,\CJ^f)|^{\frac 1 2 d + \delta})
\end{equation}
for almost all $\sg\in \mathcal F$, where $\mathcal F$ is a fundamental domain of $\SG/\Gamma$.

Choose a constant $\delta'$ such that $0<\delta'<2\delta/d$.
For each $\ell \in \NN$, define $\mathcal B_{\ell}$ as in Theorem \ref{Sch Lemma 2} with $\mathfrak F=\{A_0\}$ and $\psi(\ell)=2^{\ell\delta'}$.
Since $\delta'>0$, 
$$\sum_{\ell} \mu(\mathcal B_{\ell}) \le \sum_\ell 2^{-\ell\delta'}<\infty.$$

By the Borel-Cantelli lemma, for almost all $\sg \in \mathcal F$, there is $\ell_0>0$ such that if $\ell\ge \ell_0$,
for any $\CJ^f\in \prod_{p\in S_f} p^{\NN}$ with $|\CJ^f|^d \le 2^{\ell}$, 
\begin{equation}\label{pf AM Thm 1.3 eq 2}
\sum_{(N_1, N_2)\in K_{\ell}} R_{A_0}^2(N_1, N_2, \CJ^f)(\sg \ZZ_S^d) < (\ell+1)2^{\ell(1+\delta')}.
\end{equation}

Let $|(j,\CJ^f)|^d=N\in \NN$ and choose $\ell\in \NN$ such that  $2^{\ell-1}\le N < 2^{\ell}$. 
Note that $N$ can be expressed as the summation of at most $\ell$ number of $(N_2 - N_1)$, $(N_1, N_2) \in K_{\ell}$, mutually distinct.
Hence \eqref{pf AM Thm 1.3 eq 1} is obtained from \eqref{pf AM Thm 1.3 eq 2} and Cauchy inequality since
\[\begin{split}
&\left|\NT(\sg\ZZ_S^d, (j,\CJ^f)A_0) - |(j,\CJ^f)|^d\right|^2
< \ell (\ell+1) 2^{\ell(1+\delta')}\\
&\hspace{1.2in}< C (\log N)^2 N^{1+\delta'}
= o( N^{1+2\delta/d})=o(|(j,\CJ^f)|^{d+2\delta})
\end{split}\]
for some uniform constant $C>0$ and $N \gg 1$.

For arbitrary large $\T=(T_p)_{p\in S}$, let $\CJ^f=(T_p)_{p\in S_f}$ and choose $N$ and $j$, $j'$ such that
$N=|(j,\CJ^f)|^d \le |\T|^d < N+1=|(j',\CJ^f)|^d$. Then
\[
\NT(\sg\ZZ_S^d, (j, \CJ^f)A_0) - (N+1) 
\le \NT(\sg\ZZ_S^d, \T A_0) - |\T|^d
\le \NT(\sg\ZZ_S^d, (j', \CJ^f)A_0) - N.
\]

Since the upper and the lower bound is $o(|(j,\CJ^f)|^{d/2+\delta})$, the midterm is also $o(|\T|^{d/2+\delta})$.

\end{proof}

%%%%%%%%%%%%%%%%%%%%%%%%%%%%%%%%%%%%%%%%%%%%%%%%%
Before continuing to prove Theorem \ref{AM Thm 1.2}, let me introduce the following lemma, which is easily extendable to the $S$-arithmetic space. 
\begin{lemma}\cite[Lemma 2.3]{KY}\label{KY Lemma 2.3}
For any finite-volume sets $A_1 \subseteq A \subseteq A_2 \subset \QQ_S^d$ and any $S$-lattice $\Lambda \subset \QQ_S^d$, it follows that
\[\begin{split}
&\left|\NT(\Lambda, A) - \mu(A)\right|\\
&\hspace{0.5in}\le \max\left\{\left|\NT(\Lambda, A_1) - \mu(A_1)\right|,\left|\NT(\Lambda, A_2) - \mu(A_2)\right|\right\}
+ \mu(A_2-A_1).
\end{split}\]
\end{lemma}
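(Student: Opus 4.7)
The statement is a standard sandwich inequality, and the $S$-arithmetic extension is purely formal: none of the reasoning uses anything beyond the monotonicity of a counting function and of a measure with respect to set inclusion. The plan is to derive a two-sided bound on $\NT(\Lambda, A) - \mu(A)$ separately, using $A_1$ for the lower bound and $A_2$ for the upper bound, and then combine them.

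First, since $A_1 \subseteq A \subseteq A_2$, both the counting function $B \mapsto \NT(\Lambda, B)$ and the Haar measure $\mu$ are monotone, so $\NT(\Lambda, A_1) \le \NT(\Lambda, A) \le \NT(\Lambda, A_2)$ and $\mu(A_1) \le \mu(A) \le \mu(A_2)$. For the upper bound I would write
\[
\NT(\Lambda, A) - \mu(A) \le \NT(\Lambda, A_2) - \mu(A_1) = \bigl(\NT(\Lambda, A_2) - \mu(A_2)\bigr) + \mu(A_2 - A_1),
\]
and for the lower bound
\[
\NT(\Lambda, A) - \mu(A) \ge \NT(\Lambda, A_1) - \mu(A_2) = \bigl(\NT(\Lambda, A_1) - \mu(A_1)\bigr) - \mu(A_2 - A_1).
\]

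Taking absolute values on both estimates and bounding $|\NT(\Lambda, A_i)-\mu(A_i)|$ by the maximum of the two such quantities then yields the desired inequality. There is no essential obstacle here; the only thing to check is that $\mu(A_2) - \mu(A_1) = \mu(A_2 - A_1)$, which is valid because $A_1 \subseteq A_2$ are assumed to be of finite $\mu$-volume in $\QQ_S^d$, and the Haar measure $\mu$ on $\QQ_S^d$ constructed in Section~\ref{Statement of Results} is a genuine measure. Thus the real-variable proof in \cite{KY} transfers verbatim.
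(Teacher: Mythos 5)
Your argument is correct and is exactly the elementary sandwich argument the paper is implicitly invoking by citing \cite[Lemma 2.3]{KY} and noting it extends verbatim to $\QQ_S^d$; the two one-sided bounds you write down combine, after a sign split, to give the stated inequality, and the identity $\mu(A_2)-\mu(A_1)=\mu(A_2\setminus A_1)$ is valid for any measure and finite-measure nested sets. Nothing further is needed.
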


\begin{proof}[Proof of Theorem~\ref{AM Thm 1.2}]
Fix a bounded interval $\I\subset \QQ_S$ and a compact set $\mathcal K \subset  \mathcal F$, where $\mathcal F$ is a fundamental domain of $\SG/\Gamma$.
Take 
$$A_{\q{}{}, \I, \T}=\q{-1}{}(\I)\cap B(\origin, \T),$$
where $B(\origin, \T)=\{\ov \in \QQ_S^d : \|\ov\|_p<T_p,\; p\in S\}$, so that
$A_{\q{}{}, \I, \T} \in \mathcal F_{prod}$, where $\mathcal F_{prod}$ is defined in Proposition \ref{Prop class F}, and
\[\begin{split}
\NT(\q{}{}, \I, \T)&=N(\ZZ_S^d, A_{\q{}{}, \I, \T}),\\
\VT(\q{}{}, \I, \T)&=\mu(A_{\q{}{}, \I, \T}).
\end{split}\]

As in the proof of Theorem \ref{AM Thm 1.3}, we will find a range of $(\gamma_\infty, \gamma_f)$ for which the set
\[
\bigcap_{\CJ_0}\hspace{-0.1in} \bigcup_{\scriptsize \begin{array}{c} \CJ :\\ \CJ \succeq \CJ_0\end{array}}
\hspace{-0.1in}\left\{\sg \in \mathcal K : \begin{array}{c}
\left|\NT(\q{\sg}{0}, \I, \T) - \VT(\q{\sg}{0}, \I, \T) \right|> (T_\infty)^{\frac {d-2} 2 +\gamma_\infty} |\T^f|^{\frac {d-2} 2 +\gamma_f}\\
\text{for some } \T \in [j_\infty, j_\infty+1)\times \CJ^f\end{array}\right\}
\]
is a null set, where $\CJ, \CJ_0$ are elements of $\NN \times \prod_{p\in S_f} p^{\NN}$. Put
\[
\mathcal C_{\CJ}=\left\{\sg \in \mathcal K : \begin{array}{c}
\left|\NT(\q{\sg}{0}, \I, \T) - \VT(\q{\sg}{0}, \I, \T) \right|> (T_\infty)^{\frac {d-2} 2 +\gamma_\infty} |\T^f|^{\frac {d-2} 2 +\gamma_f}\\
\text{for some } \T \in [j_\infty, j_\infty+1)\times \CJ^f\end{array}\right\}.
\]

For each $\CJ=(j_\infty, p_1^{j_1},\ldots, p_s^{j_s})$, 
let $\varepsilon_1=j_\infty^{-\alpha_\infty}|\CJ^f|^{-\alpha_f}$ and 
$\varepsilon_2=j_\infty^{-\beta_\infty}|\CJ^f|^{-\beta_f}$, where $\alpha_\infty$, $\alpha_f$, $\beta_\infty$ and $\beta_f$ are positive.

\vspace{0.1in}
%%%%%%%%%%%%%%%%%%%%%%%%%%%%%%%%%%%%%%%%%%%%%%%%
\noindent \emph{Covering of the space.\quad}
There is a constant $C(\mathcal K)>0$ (cf. \cite[Lemma 2.1]{KY}) such that for each $\CJ$, we can find a subset
$\mathcal Q=\mathcal Q(\mathcal K, \CJ)$ of $\mathcal K$ for which
\begin{enumerate}[(i)]
\item $\mathcal K \subseteq \bigcup\limits_{\sh \in \mathcal Q} \mathcal B(\sh, \varepsilon_1)$, where
%%%%%%%%%

\vspace{-0.1in}
\[\begin{split}
\mathcal B(\sh, \varepsilon_1)=
\Big(\left\{g_\infty \in \SL_d(\RR) : \|g_\infty\|_{op} \le 1+\varepsilon_1\right\}
\times\prod_{p\in S_f}G_p(\ZZ_{p})\Big).\sh;
\end{split}\]

\vspace{-0.2in}
\item $\# \mathcal Q(\mathcal K, \CJ) < C(\mathcal K) \varepsilon_1^{-\frac 1 2 (d+2)(d-1)}$.
\end{enumerate}

Here, $\|\cdot\|_{op}$ is the operator norm and $G_p(\ZZ_p)=\UL_d(\ZZ_p)$ if $\SG=\UL_d(\QQ_p)$ or $\SL_d(\ZZ_p)$ if $\SG=\SL_d(\QQ_p)$.
Note that the exponent $(d+2)(d-1)/2$ is the codimension of $\SO(d)$ in $\SL_d(\RR)$ since $\SG/\Gamma \simeq \SL_d(\RR)/\SL_d(\ZZ)\times \prod_{p\in S_f} G_p(\ZZ_p)$.

Observe that each 
\[
\ov \in \ZZ_S^d : \q{\sg\sh}{0}(\ov) \in \I, \|\ov\|_\infty < T_\infty \text{ and } \|\ov\|_{p} = T_p,\; \forall p\in S_f
\]
corresponds to 
\[
\ow(=\sg\ov) \in \sg\ZZ_S^d : \q{\sh}{0}(\ow) \in \I, \|\ow\|_\infty < T_\infty(1+\varepsilon_1) \text{ and } \|\ow\|_{p} = T_p,\; \forall p\in S_f.
\]

If we put $\T_1=(T_\infty(1-\varepsilon_1), \T^f)$ and $\T_2=(T_\infty(1+\varepsilon_1), \T^f)$,
we have
$$A_{\q{\sh}{0}, \I, \T_1}\subseteq A_{\q{\sg\sh}{0}, \I, \T}\subseteq A_{\q{\sh}{0}, \I, \T_2}.$$

By Lemma~\ref{KY Lemma 2.3}, since
\[
\mu(A_{\q{\sh}{0}, \I, \T_2}-A_{\q{\sh}{0}, \I, \T_1})\sim 
c_{\q{\sh}{0}} \mu(\I) 2(d-2) \varepsilon_1 |\T|^{d-2},
\]
it follows that
\[\begin{split}
&\left\{\sg \in \mathcal B(\sh, \varepsilon_1) : \begin{array}{c}
\left|\NT(\q{\sg\sh}{0},\I,\T)-\VT(\q{\sg\sh}{0},\I,\T)\right|>(T_\infty)^{\frac {d-2} 2 +\gamma_\infty} |\T^f|^{\frac {d-2} 2 +\gamma_f}\\
\text{for some } \T \in [j_\infty, j_\infty+1)\times \CJ^f \end{array}\right\}\\
&\subseteq\left\{\sg \in \mathcal B(\sh, \varepsilon_1) : \begin{array}{c}
\left|\NT(\sg\ZZ_S^d, A_{\q{\sh}{0},\I,\T})-\mu(A_{\q{\sh}{0},\I,\T})\right|>(T_\infty)^{\frac {d-2} 2 +\gamma_\infty} |\T^f|^{\frac {d-2} 2 +\gamma_f}\\
\text{for some } \T \in [j_\infty-\varepsilon_1, (j_\infty+1)+\varepsilon_1)\times \CJ^f \end{array}\right\},
\end{split}\]
if $\varepsilon_1|\T|^{d-2}< T_\infty^{(d-2)/2+\gamma_\infty} |\T^f|^{(d-2)/2+\gamma_f}$. Hence we need that
\begin{equation}\label{eq no 6}
\frac {d-2} 2 +\gamma_\infty > (d-2) - \alpha_\infty \quad\text{and}\quad \frac {d-2} 2 +\gamma_f > (d-2) - \alpha_f.
\end{equation}

\vspace{0.1in}
%%%%%%%%%%%%%%%%%%%%%%%%%%%%%%%%%%%%%%%%%%%%%%%%%
\noindent \emph{Transfer w.r.t. $\T$.\quad}
Let us divide the interval $[j_\infty-\varepsilon_1, (j_\infty+1)+\varepsilon_1)$ into 
$\left(\lfloor \frac {1+2\varepsilon_1} {\varepsilon_2} \rfloor+1\right)$ pieces.
For each $k=0,1,2,\ldots, \lfloor \frac {1+2\varepsilon_1} {\varepsilon_2} \rfloor$, observe that
\[\begin{split}
\mu\left(A_{\q{\sh}{0},\I,((j_\infty-\varepsilon_1)+(k+1)\varepsilon_2,\CJ^f)}-A_{\q{\sh}{0},\I,((j_\infty-\varepsilon_1)+k\varepsilon_2,\CJ^f)}\right)
\sim c_{\q{\sh}{0}} \mu(\I) \varepsilon_2 \;j_\infty^{d-3} |\CJ^f|^{d-2}.
\end{split}\]

Again, by Lemma~\ref{KY Lemma 2.3}, if
\begin{equation}\label{eq no 7}
\frac {d-2} 2 +\gamma_\infty > (d-3)-\beta_\infty \quad\text{and}\quad \frac {d-2} 2 +\gamma_f > (d-2) -\beta_f,
\end{equation}
we deduce that
\[\begin{split}
&\left\{\sg \in \mathcal B(\sh, \varepsilon_1) :
\begin{array}{c}
\left|\NT(\sg\ZZ_S^d, A_{\q{\sh}{0},\I,\T})-\mu(A_{\q{\sh}{0},\I,\T})\right|>(T_\infty)^{\frac {d-2} 2 +\gamma_\infty} |\T^f|^{\frac {d-2} 2 +\gamma_f}\\
\text{for some } \T \in [j_\infty-\varepsilon_1, (j_\infty+1)+\varepsilon_1)\times \CJ^f \end{array}\right\}\\
&\subseteq \bigcup_{k=0}^{\lfloor \frac {1+2\varepsilon_1} {\varepsilon_2}\rfloor}
\left\{\sg \in \mathcal B(\sh, \varepsilon_1) : \begin{array}{l}
\left|\NT(\sg\ZZ_S^d, A_{\q{\sh}{0},\I,\T})-\mu(A_{\q{\sh}{0},\I,\T})\right|\\
\hspace{1.35in}>(T_\infty)^{\frac {d-2} 2 +\gamma_\infty} |\T^f|^{\frac {d-2} 2 +\gamma_f}\\
\hspace{0.4in}\text{for } \T=\left((j_\infty-\varepsilon_1)+k\varepsilon_2, \CJ^f \right) \end{array}\right\}.
\end{split}\]

\vspace{0.1in}
As a result, by Chebyshev's inequality with Proposition \ref{Prop class F} (b) (see also \eqref{class F}),
it follows that
\[
m(\mathcal C_{\CJ}) \le C_{\mathcal F_{prod}}\: j_\infty^{-2\gamma_\infty} |\CJ^f|^{-2\gamma_f} \times \varepsilon_1^{-\frac 1 2 (d+2)(d-1)} \times
\left\lfloor \frac {1+2\varepsilon_1} {\varepsilon_2}\right\rfloor.
\]
Hence $\sum\limits_{\CJ\succeq \CJ_0} m(\mathcal C_{\CJ})\rightarrow 0$ as $\CJ_0\rightarrow \infty$ if
\begin{equation}\label{eq no 8}\begin{split}
-2\gamma_\infty+\alpha_\infty \Big(\frac 1 2 (d+2)(d-1)\Big)+ \beta_\infty &< -1 \;\text{and}\\
-2\gamma_f + \alpha_f\Big(\frac 1 2 (d+2)(d-1)\Big) + \beta_f &< 0.
\end{split}\end{equation}

By \eqref{eq no 6}, \eqref{eq no 7} and \eqref{eq no 8}, 
the set $\bigcap\limits_{\CJ_0}\bigcup\limits_{\CJ\succeq \CJ_0} \mathcal C_{\CJ}$ has measure zero if
\[
\gamma_\infty, \gamma_f > \frac {d-2} 2 \cdot \frac {d^2+d-1}{d^2+d+4}.
\]
\end{proof}

\end{document}